\documentclass[oneside,a4paper,english, 11pt]{amsart}

\input{artmymacros.sty}

\title{The weight part of Serre's conjecture over CM fields}
%\author{Daniel Le, Bao V. Le Hung} 

\author{Daniel Le}
\address{Department of Mathematics,
Purdue University,
150 N. University Street, 
West Lafayette, IN 47907-2067}
\email{ledt@purdue.edu}

\author{Bao V.~Le Hung}
\address{Department of Mathematics,
Northwestern University, 
2033 Sheridan Road, 
Evanston, Illinois 60208, USA}
\email{lhvietbao@googlemail.com}

\begin{document}

\begin{abstract} 
Under some technical assumptions of a global nature, we establish the weight part of Serre's conjecture for mod $p$ Galois representations for CM fields that are tamely ramified and sufficiently generic at $p$. 
\end{abstract}

\maketitle

%\tableofcontents
\section{Introduction}

\subsection{The weight part of Serre's conjecture}
In \cite{serre-duke}, Serre made a bold conjecture asserting that every continuous, odd, and irreducible representation $\rbar: G_{\Q}\defeq \Gal(\ovl{\Q}/\Q) \ra \GL_2(\ovl{\F}_p)$ arises from a modular form. 
Further, he spelled out a strong form of conjecture predicting the minimal weight and level for the modular form in terms of the local properties of $\rbar$. 
These conjectures initiated a mod $p$ and $p$-adic Langlands philosophy that would play an influential role in understanding reciprocity between Galois representations and automorphic forms. 

A decade later, Ash, Doud, Pollack, and Sinnott \cite{Ash,AS00,ADP} generalized Serre's modularity conjecture to arbitrary dimension. 
First, every Hecke eigenclass in cohomology of a mod $p$ local system on an arithmetic locally symmetric space $Y(\Gamma) \defeq \Gamma \backslash \GL_n(\R)/\mathrm{O}_n(\R)\R^\times$ should give rise to a continuous and odd Galois representation $\rbar: G_{\Q} \ra \GL_n(\ovl{\F}_p)$. 
Second, every continuous, odd, and irreducible representation $\rbar: G_{\Q} \ra \GL_n(\ovl{\F}_p)$ should arise in this way. 
These conjectures go beyond Langlands' original reciprocity conjecture, as when $n>2$, the cohomology of $Y(\Gamma)$ tends to have abundant torsion which has no direct connection to automorphic forms. 

Early work on Serre's original conjecture focused on showing that the weak form implies the strong form. 
More than a decade later, Khare and Wintenberger \cite{KW} were able to use this implication to execute an elaborate inductive strategy to prove the weak form of Serre's conjecture. 
Investigations into generalizations of Serre's conjecture have to this point followed a similar shape. 
As in Serre's original conjecture, the minimal level giving rise to $\rbar$ should be $\Gamma_1(N)$ for $N$ the prime-to-$p$ Artin conductor of $\rbar$. 
In contrast, the generalization of the weight part of Serre's conjecture turns out to be far more subtle. Beyond the simplest setting of holomorphic modular forms, it turns out to be much more robust to formulate the weight part representation theoretically, in terms of simple $\ovl{\F}_p[\GL_n(\F_p)]$-modules called \emph{Serre weights}. The Serre weights give rise to tautological mod $p$ local systems on $Y(\Gamma)$, and the weight part of Serre's conjecture is equivalent to a description of the set $W(\rbar)$ of Serre weights $\sigma$ for which there exists Hecke eigenclasses in $H^*(Y(\Gamma),\sigma)$ that give rise to $\rbar$. 
The most basic expectation is that $W(\rbar)$ depends only on the local representation $\rhobar \defeq \rbar|_{G_{\Q_p}}$. 

Serre gave an elaborate but explicit description of $W(\rbar)$ in terms of $\rhobar$ in dimension $2$, and later \cite{BDJ,schein-IL} generalized Serre's recipe to totally real extensions of $\Q$. 
\cite{ADP} made some predictions about $W(\rbar)$ in higher rank, but at present, there are no complete generalizations of the weight part of Serre's conjecture in any dimension greater than $2$. 
An explicit recipe in arbitrary dimension is perhaps too much to ask for. 
One can interpret the $\GL_2$-recipes in terms of the geometry of the recently constructed Emerton--Gee moduli stacks of mod $p$ local Galois representations whose irreducible components $\cC_\sigma$ are indexed by Serre weights $\sigma$. 
It is this geometric interpretation that should generalize---the complexity of a general recipe reflects (in part) the complexity of the geometry of these stacks. 

In another direction, Herzig studied the case when $\rbar$ is tamely ramified at $p$ \cite{herzig-duke}. 
This case is interesting for at least two reasons. 
First, this should be the richest case and $W(\rbar)$ is the largest. 
Second, tame inertial representations can be classified combinatorially, and so one might hope for a combinatorial formula for $W(\rbar)$. 
Herzig defined a set $W^?(\rhobar)$ for tamely ramified $\rhobar$ in terms of the decomposition of the reduction mod $p$ of a $\GL_n(\F_p)$-representation naturally associated to the restriction of $\rhobar$ to the inertial subgroup. 
Then he predicted that $W^?(\rhobar)$ should at least contain the set of $p$-regular Serre weights in $W(\rbar)$. 
For generic $\rhobar$, the set $W^?(\rhobar)$ indeed admits a combinatorial description. 

\subsection{Results}

In recent years, there has been a flurry of results on the weight part of Serre's conjecture. 
Gee and his collaborators have obtained essentially complete results in the context of $2$-dimensional representations of $G_F$ where $F/\Q$ is a totally real extension \cite{GK,GLS}. 
There has also been progress in higher rank. 
Let  $W^g(\rhobar)$ be the set of Serre weights $\sigma$ for which $\rhobar\in \cC_\sigma$. 
The authors with Levin and Morra \cite{MLM} have shown that for definite unitary groups, $W(\rbar) = W^?(\rhobar) = W^g(\rhobar)$ when the combinatorial parameter for the restriction of $\rhobar$ to inertia is \emph{super generic}, in the sense it avoids certain universal polynomial congruences. 
These advances combine the Taylor--Wiles method with analyses of local Galois deformation rings. 

Still, to this point, there had been no progress on the weight part of Serre's conjecture beyond dimension $2$ in the context in which Ash, Herzig, and others originally proposed their conjectures, or more generally the context of locally symmetric spaces of $\GL_n$ over a number field $F$. There are several reasons to expect this context to be substantially harder:
\begin{itemize}
\item The relevant locally symmetric spaces are not algebraic varieties, hence there is no easy bridge to connect automorphic forms and Galois representations.
\item It has been observed (e.g. in \cite{BV}) that the (integral) cohomology of the relevant locally symmetric space consists predominantly of torsion, and in some sense most Hecke eigenclasses are not liftable to characteristic $0$ and thus have no direct automorphic interpretation. 
\item The invariant $\ell_0$ from \cite{BorelWallach} which controls the range of cohomology of locally symmetric spaces where tempered automorphic forms contribute is positive. In particular, the tempered part of cohomology does not concentrate in one degree, which breaks the key numerical coincidence that makes the Taylor--Wiles patching process work.
\end{itemize}
 
However, our understanding of this situation has dramatically improved over the last decade. First, when $F$ is a CM or totally real field, Scholze \cite{scholze} constructed the expected Galois representations attached to possibly torsion Hecke eigenclasses, refining earlier results for characteristic $0$ classes by Harris--Lan--Taylor--Thorne \cite{HLTT}, thus overcoming the first two difficulties.
Second, Calegari--Geraghty \cite{CG} found a way to modify the Taylor--Wiles patching process in contexts where cohomology does not concentrate, though its full power is conditional on several conjectures pertaining to structural information on torsion in cohomology as well as their associated Galois representations.

In this paper, we prove the analogue of the aforementioned result of \cite{MLM} for $\GL_n$ over an imaginary CM field $F$ in super generic situations (with additional technical assumptions), thus obtaining the first results on Herzig's formulation of the weight part of Serre's conjecture in a setting where $\ell_0>0$.
 
\begin{thm}\label{thm:main intro}\emph{(see Theorem \ref{thm:main})}
Let $F$ be a CM field containing an imaginary quadratic field. 
Assume that 
\begin{itemize}
\item $[F:\Q]\geq 10$; and
\item $p > 2n+1$ is a prime that splits completely in $F$. 
\end{itemize}

Let $\rbar: G_F \ra \GL_n(\F)$ be the mod $p$ reduction of the Galois representation associated to a cuspidal automorphic representation $\pi$ of $\GL_n(\A_F)$ such that
\begin{itemize}
\item $\pi$ has weight $0$; 
\item $\pi$ has Iwahori fixed vectors away from $p$; and
\item $\pi^{\ker(\GL_n(\cO_F)\ra \GL_n(\cO_F/p))} \neq 0$. 
\end{itemize} 
Assume that 
\begin{itemize}
\item $\rbar$ is decomposed generic in the sense of \cite[Definition 4.3.1]{10author}; 
\item $\rbar|_{G_{F(\zeta_p)}}$ is absolutely irreducible; and
\item $\rbar(G_F - G_{F(\zeta_p)})$ contains a scalar matrix. 
\end{itemize}

Finally, assume that $\rbar|_{G_{F_w}}$ is is tamely ramified and super generic for all places $w|p$. 
Then $W(\rbar)$ is the set 
\[
\big\{\bigotimes_{w|p} \sigma_w \mid \sigma_w \in W^?(\rbar|_{G_{F_w}})\big\}
\] 
predicted by Herzig. 
\end{thm}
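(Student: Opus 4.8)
The plan is to establish the two inclusions $W(\rbar) \subseteq \{\bigotimes_w \sigma_w\}$ and $\{\bigotimes_w \sigma_w\} \subseteq W(\rbar)$ separately, reducing each to a local statement about Emerton--Gee stacks and combining with a Taylor--Wiles--Calegari--Geraghty patching argument adapted to the $\ell_0 > 0$ setting. For the containment $W(\rbar) \subseteq \{\bigotimes_w \sigma_w \mid \sigma_w \in W^?(\rbar|_{G_{F_w}})\}$, I would argue that if a Serre weight $\sigma = \bigotimes_w \sigma_w$ contributes a Hecke eigenclass giving rise to $\rbar$, then after localizing the cohomology at the associated maximal ideal of the Hecke algebra, the support of the patched module over the local deformation ring is nonzero; this forces $\rhobar_w \defeq \rbar|_{G_{F_w}}$ to lie on the component $\cC_{\sigma_w}$ of the Emerton--Gee stack for each $w \mid p$. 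Under the super genericity hypothesis, the geometric weight set $W^g(\rbar|_{G_{F_w}})$ coincides with $W^?(\rbar|_{G_{F_w}})$ (this is where I would invoke the local results on deformation rings from \cite{MLM} and the description of components of the Emerton--Gee stack), giving the desired inclusion.

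For the reverse inclusion $\{\bigotimes_w \sigma_w\} \subseteq W(\rbar)$, the strategy is a patching argument. One fixes a definite (or, more precisely here, a $\GL_n$-over-CM) automorphic setup in which $\rbar$ appears, patches the cohomology of the relevant locally symmetric spaces against a product of local deformation rings $R_\infty = \widehat{\bigotimes}_{w} R_{\rhobar_w}$, and obtains a patched module $M_\infty$. The key numerical input is that the patched module is maximal Cohen--Macaulay over $R_\infty$ of the expected dimension, and that its support is a union of irreducible components. The crucial local computation, imported from the super generic analysis, is that the special fiber $\overline{R}_{\rhobar_w}^{\sigma_w}$ of the potentially crystalline deformation ring of Hodge type $\sigma_w$ (tame type corresponding to $\rhobar_w$) is nonzero and in fact irreducible and normal for each $\sigma_w \in W^?(\rhobar_w)$. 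An Auslander--Buchsbaum / depth argument then forces $M_\infty/\mathfrak{m}_{\sigma}$ to be nonzero, which translates back into the existence of a Hecke eigenclass in $H^*(Y(\Gamma), \sigma)$ attached to $\rbar$.

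The main obstacle, and the reason the hypotheses $[F:\Q] \geq 10$, decomposed generic, and the existence of the automorphic lift $\pi$ are needed, is making the Calegari--Geraghty patching machinery actually run in the $\ell_0 > 0$ setting: one needs (i) Scholze's Galois representations attached to torsion classes together with the local--global compatibility at $p$ refined enough to pin down the Hodge--Tate weights and the crystalline condition, (ii) a vanishing/concentration result for the localized cohomology complex so that the patched module has the correct codimension (this is where decomposed genericity enters, via the results of \cite{10author} on the structure of the cohomology after localization), and (iii) control of the framed deformation rings away from $p$, which is why Iwahori level and the Ihara-type avoidance are imposed. Granting the local model results for super generic $\rhobar_w$ that upgrade $\overline{R}_{\rhobar_w}^{\sigma_w}$ to a nice (normal, irreducible, with known special fiber) ring, the global argument is then a now-standard, if technically heavy, patching computation: I would expect the bulk of the work to be in verifying the input hypotheses of the patching framework rather than in the final support computation, and in checking that the genericity is strong enough that no "extra" components of $R_\infty$ survive in the support of $M_\infty$.
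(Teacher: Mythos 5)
Your proposal relies, at the crucial step, on a concentration/vanishing statement for the patched complex that is not available in the $\ell_0 > 0$ setting and that the paper explicitly says cannot be assumed. You write that one needs ``a vanishing/concentration result for the localized cohomology complex so that the patched module has the correct codimension,'' attributing this to decomposed genericity and \cite{10author}; but decomposed genericity in this paper is used only to invoke the local--global compatibility result of Hevesi, and the Calegari--Geraghty vanishing conjecture (which \emph{would} imply the concentration you want) is precisely what the authors say is out of reach. Without concentration, the patched object $C_\infty$ is a genuine complex, $C_\infty(\sigma)$ need not concentrate in a single degree, the Euler-characteristic-type support cycles are no longer effective, and the Auslander--Buchsbaum / maximal Cohen--Macaulay argument you sketch for the lower bound $W^?(\rhobar)\subseteq W(\rbar)$ simply does not start: there is no single module to apply depth arguments to, and nonvanishing of $C_\infty(\sigma)\otimes k$ is not forced by nonvanishing of $C_\infty(\sigma)$.

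The paper circumvents this in two ways that your proposal does not anticipate. For the upper bound (and the finer ``support bound''), the argument is an induction on cohomological degree: if $\sigma\notin W^?(\rhobar)$ contributed in degree $d$, Axiom~\ref{LGC} kills an appropriate $C_\infty$ applied to a lattice in a Deligne--Lusztig representation, and the resulting d\'evissage produces a contribution in degree $d-1$ from some $\sigma'\in\JH(\ovl{R})$, contradicting minimality once combined with the combinatorics of Proposition~\ref{prop:combinatorial membership of DL} and Lemma~\ref{lem:WE combinatorics}. For the lower bound, the paper does not use potential diagonalizability or a CM/depth argument; instead it starts from the single weight supplied by the automorphic lift $\pi$, shows that its length invariant is nonzero, and then propagates nonvanishing through the set $W^?(\rhobar)$ via the ``connecting types'' machinery (Proposition~\ref{prop:obvweight}, Lemma~\ref{lem:connecting type}, Corollary~\ref{cor:spreading modularity}), a derived Ihara-avoidance argument in which two outer weights of a common type $R(t_\mu s)$ are shown to have equal patched multiplicities because the potentially crystalline deformation ring $R^\tau_{\rhobar}$ is geometrically integral (Theorem~\ref{thm:geomintegral}). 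Your sketch of the lower bound as ``MCM support argument forces nonzero fiber'' therefore replaces the core new ideas of the proof with a mechanism that is known to fail here.
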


\begin{rmk} \mbox{}
\begin{enumerate}
\item In the main text, the hypothesis on the CM field $F$ is weaker but more artificial. 
It (and the decomposed genericity of $\rbar$) arises only due to the need to invoke the local-global compatibility results of \cite{Hevesi}. In fact, we have written our arguments in an axiomatic framework, so that Theorem \ref{thm:main intro} immediately generalizes to the case $F$ is totally real (and in particular to the setting $F=\Q$ originally considered by Ash, Herzig and others) whenever the corresponding generalization of \cite{Hevesi} is obtained.
\item The assumption that $\rbar$ arises as the mod $p$ reduction of an automorphic Galois representation is key to our method for obtaining the lower bound on $W(\rbar)$, though it is not needed for the upper bound of $W(\rbar)$. 
It is not clear to us how restrictive this assumption is. 
On the one hand, experience has shown that mod $p$ \emph{Hecke eigenclasses} usually do not lift to characteristic $0$ eigenclasses when $\ell_0>0$. 
On the other hand, there seems to be no clear consensus on whether the \emph{system of Hecke eigenvalues} lifts to characteristic $0$ (see the discussion in \cite[\S 1]{FKP}), which is what we actually need. 
However, there are plenty of examples where Theorem \ref{thm:main intro} applies via base change of automorphic forms coming from unitary groups. 
\item The technical conditions on the images of $\rbar$ are standard hypotheses for Taylor--Wiles patching, which are therefore essential to our arguments.
\item The hypothesis that $\rbar_w$ is tame and super generic means that the set of inertial weights of $\rbar_w$ avoids certain universal polynomial congruences mod $p$, and is hard to make explicit. Its presence is due to the fact that we invoke the local results of \cite{MLM}. It is conceivable that our method could relax this genericity to the more familiar condition that asks the inertial weight to avoid certain hyperplanes mod $p$, though we have not yet succeeded in doing so.  
\end{enumerate}
\end{rmk}

\subsection{Methods}
Our method is based on a modification of the techniques of \cite{MLM} which is robust enough to apply to the $\ell_0>0$ setting. To lighten the burden of notation, we will pretend $F=\Q$.
Following Calegari--Geraghty, we can ``patch'' the cohomology complexes to produce a perfect complex $C_\infty$ of $S_\infty[\GL_n(\F_p)]$-modules for a certain formally smooth $\Z_p$-algebra $S_\infty$, while (morally) at the same time having compatible actions of local Galois deformation rings. This has the key feature that certain specializations of $C_\infty$ recovers the cohomology of $Y(\Gamma)$ with coefficients in tautological local systems; in particular, $W(\rbar)$ is precisely the set of $\sigma$ for which $C_\infty \otimes_{\GL_n(\F_p)} \sigma \neq 0$.
Further, local-global compatibility for Galois representations attached to Hecke eigenclasses implies that for a lattice $L$ in a Deligne--Lusztig representation of $\GL_n(\F_p)$, the support of the tensor product $C_\infty \otimes_{\GL_n(\F_p)} L$ is contained in a potentially semistable deformation ring defined in terms of the Deligne--Lusztig representation and inertial local Langlands. 
These properties in fact constitute the axiomatic setup to which our arguments apply. 

The method from \cite{MLM} in the $\ell_0=0$ setting proceeds in the following steps: 
\begin{enumerate}
\item (support bound/weight elimination) Bound the support cycle of $C_\infty \otimes_{\GL_n(\F_p)} \sigma$. In particular, this shows that $W(\rbar) \subset W^?(\rhobar)$.
\item (modularity of obvious weights) Show that $W(\rbar)$ contains a special collection $W_\obv(\rhobar)$ of \emph{obvious} weights. 
\item (full support) Show that for any lattice $L$ in a Deligne--Lusztig representation, the support cycle of $C_\infty \otimes_{\GL_n(\F_p)} L/p$ is a constant multiple of the special fiber cycle of the corresponding potentially semistable deformation ring. 
By varying $L$, we compute the support cycle of  $C_\infty \otimes_{\GL_n(\F_p)} \sigma$, in particular showing that it is non-zero exactly when $\sigma\in W^?(\rhobar)$. 
\end{enumerate}
When $\ell_0=0$, $C_\infty$ is a projective $S_\infty[\GL_n(\F_p)]$-module, so that 
whenever $\sigma$ is a Jordan--H\"older factor of $\ovl{L}\defeq L/p$, the difference of the support cycle of $C_\infty \otimes_{\GL_n(\F_p)} \ovl{L}$ and $C_\infty \otimes_{\GL_n(\F_p)} \sigma$ is effective. This positivity underlies the key fact that if the latter is non-zero, then so is the former.
This key fact combined with a theory of (na\"ive) local models for potentially crystalline Galois deformation rings immediately gives (1).
Using step (1), the modularity of obvious weights can then be proved using potential diagonalizability change of type techniques from \cite{BLGGT}. 
Full support requires two further observations: 
\begin{itemize}
\item the reduction of any $L$ for which the corresponding potentially semistable Galois deformation ring is nonzero contains an element of $W_\obv(\rhobar)$; and
\item any potentially semistable Galois deformation ring of minimal regular weight is either an integral domain or $0$ since local models are unibranch at a super generic and tame $\rhobar$ \cite{MLM}. 
\end{itemize}
The first observation combined with the key fact and step (2) shows that the cycle of $C_\infty \otimes_{\GL_n(\F_p)} \ovl{L}$ is non-zero, and then the second observation shows that this cycle must be a positive multiple of the special fiber cycle of the corresponding deformation ring.

We now turn to the situation $\ell_0>0$. Under the Calegari--Geraghty vanishing conjecture \cite[Conjecture B(4)]{CG},  $C_\infty$ would again concentrate in one degree, and the above argument carries over verbatim. Unfortunately, the vanishing conjecture seems out of reach at the moment, so we have to proceed differently.
The key difficulty we have to face is that $C_\infty \otimes_{\GL_n(\F_p)} \sigma$ may not concentrate, and hence its support cycle need not be effective (for example, it might reduce to $0$ even when $\sigma\in W^?(\rhobar)$). This causes the key fact (and thus all the arguments above) to completely break down. Additionally, for step (2) we no longer have access to the potential diagonalizability change of type arguments.

We now explain the new ideas needed to execute the above program without the key fact.
While the weight elimination argument of \cite{MLM,LLL} does not immediately show that $C_\infty \otimes_{\GL_n(\F_p)} \sigma = 0$ for $\sigma\notin W^?(\rhobar)$, it does produce a degree shift in the possible non-vanishing cohomology group (for a different $\sigma'$). 
Knowledge of the interaction of $W^?(\rhobar)$ with Deligne--Lusztig representations and induction on degree allows us to show the desired weight elimination. 
In fact, the same argument after localizing at generic points allows us to show the desired support bound. In particular, we get the expected upper bound $W(\rbar)\subset W^?(\rhobar)$.

It remains to produce the matching lower bound $W(\rbar)\supset W^?(\rhobar)$, that is, one needs to produce enough elements in $W(\rbar)$. 
In \cite{OBW}, we bypass the use of potential diagonalizability for proving the modularity of obvious weights by relying instead on an $\ell = p$ analogue of Taylor's ``Ihara avoidance" method \cite{taylor}. 
A derived version of Ihara avoidance was recently developed and applied to the $\ell_0 > 0$ setting in \cite{10author} to great effect. Roughly speaking, this method spreads the existence of a component $\cC_\sigma$ in the support cycle of $C_\infty \otimes_{\GL_n(\F_p)} \sigma$ to the existence of a component $\cC_{\sigma'}$ in the support cycle of $C_\infty \otimes_{\GL_n(\F_p)} \sigma'$, as long as $\cC_{\sigma}, \cC_{\sigma'}$ simultaneously appear in a well-understood part of a common ``connecting" potentially semistable stack. Our main observation is that one can enlarge the set of types considered in \cite{OBW} to connect not just pairs of obvious weights $\sigma,\sigma'\in W_\obv(\rhobar)$, but in fact to connect any pair of weights in the much larger $W^?(\rhobar)$. This allows us to spread membership in $W(\rbar)$ throughout $W^?(\rhobar)$, as long as one has a starting member (this is where we invoke the existence of the automorphic lift of $\rbar$). 
Finally, we remark that even when specializing to the case $\ell_0=0$, our arguments give a new, more robust proof of the main result on the weight part of Serre's conjecture in \cite{MLM}. 

\subsection{Ackowledgments}
D.L.~was supported by the National Science Foundation under agreement DMS-2302623 and a start-up grant from Purdue University. 
B.LH.~acknowledges support from the National Science Foundation under grants Nos.~DMS-1952678 and DMS-2302619 and the Alfred P.~Sloan Foundation. 

\subsection{Notation}

Let $n$ be a positive integer. 
Let $p$ be a prime. 
Fix an algebraic closure $\ovl{\Q}_p$ of $\Q_p$. 
Let $E \subset \ovl{\Q}_p$ be a finite extension of $\Q_p$ with ring of integers $\cO\subset \ovl{\Z}_p$, uniformizer $\varpi$, and residue field $\F \defeq \cO/\varpi$. 
We will assume that $E$ is sufficiently large as specified by the context.

\section{Preliminaries}

\subsection{Affine Weyl group notations}\label{sec:affine Weyl notation}
Let $G$ be a split reductive group with a maximal torus $T \subset B$ in a Borel subgroup. 
Recall the following standard notations \cite[\S 1.3]{DLR}:
\begin{itemize}
\item the character group $X^*(T)$ of $T$;
\begin{itemize}
\item $R\subset X^*(T)$ the set of roots of $G$ with respect to $T$; 
\item for $\nu \in X^*(T)$ or $X^*(T)\otimes_{\Z} \R$, let $h_\nu \defeq \max_{\alpha \in R} \langle \nu,\alpha^\vee\rangle$; 
\item $X^0(T) \subset X^*(T)$ the set of elements $\nu$ with $\langle \nu,\alpha^\vee\rangle = 0$ for all $\alpha \in R$, i.e.~$h_\nu = 0$; 
\item $R^+ \subset R$ the subset of positive roots with respect to $B$, i.e.~the roots occurring in $\mathrm{Lie}(B)$; note that this is the convention in \cite{jantzen} but opposite to \cite{RAGS}; this determines a partial ordering $\leq$ on $X^*(T)$ characterized by $\lambda \leq \mu$ if and only if $\mu-\lambda$ is a sum (possibly with multiplicities) of elements in $R^+$; 
\item $\Delta \subset R^+$ the subset of simple roots; 
\item for each $\alpha \in \Delta$, we fix a choice of $\omega_\alpha \in X^*(T)$ (unique up to $X^0(T)$) with $\langle \omega_\alpha,\beta^\vee\rangle = \delta_{\alpha\beta}$ for any $\beta\in \Delta$; 
\item $X(T)^+\subset X^*(T)$ the dominant weights with respect to $R^+$; 
\item the $p$-restricted set $X_1(T)\subset X(T)^+$ of dominant weights $\lambda$ such that $\langle \lambda,\alpha^\vee\rangle \leq p-1$ for all $\alpha \in \Delta$; 
\item let $\eta = \sum_{\alpha \in \Delta} \omega_\alpha \in X^*(T)$ so that $\langle \eta,\alpha^\vee\rangle = 1$ for all $\alpha\in\Delta$;
\item 
\end{itemize}
\item the Weyl group $W$ of $(G,T)$ and $w_0\in W$ its longest element;
\begin{itemize}
\item the extended affine Weyl group $\tld{W} \defeq X^*(T) \rtimes W$, which acts on $X^*(T)$ on the left by affine transformations; for $\nu\in X^*(T)$ we write $t_\nu\in \tld{W}$ for the corresponding element;
\item the affine Weyl group $W_a \defeq \Z R \rtimes W \subset \tld{W}$; 
\item For a root $\alpha \in R$, let $s_\alpha\in W$ be the corresponding reflection. 
\end{itemize}
\item the set of alcoves of $X^*(T)\otimes_{\Z}\R$, i.e.~the set of connected components of 
\[
X^*(T)\otimes_{\Z}\R\setminus\bigcup_{n\in\Z,\alpha\in R}\{\lambda\in X^*(T)\otimes_{\Z}\R\mid \langle \lambda,\alpha^\vee\rangle=n\},
\]
which has a (transitive) left action of $\tld{W}$; 
\begin{itemize}
\item the dominant alcoves, i.e.~alcoves $A$ such that $0<\langle \lambda,\alpha^\vee\rangle$ for all $\alpha\in \Delta, \lambda\in A$;
\item the lowest (dominant) alcove $A_0 = \{\lambda \in X^*(T)\otimes_{\Z}\R\mid 0<\langle \lambda,\alpha^\vee\rangle<1 \textrm{ for all } \alpha\in R^+\}$; 
\item $\Omega\subset \tld{W}$ the stabilizer of the base alcove;
\item the restricted alcoves, i.e.~alcoves $A$ such that $0<\langle \lambda,\alpha^\vee\rangle<1$ for all $\alpha\in \Delta, \lambda\in A$; note that the union of the restricted alcoves form a fundamental domain for the translation action of $X^*(T)$ on the set of all alcoves; 
\item the set $\tld{W}^+\subset \tld{W}$ of elements $\tld{w}$ such that $\tld{w}(A_0)$ is dominant;
\item the set $\tld{W}_1\subset \tld{W}^+$ of elements $\tld{w}$ such that $\tld{w}(A_0)$ is restricted; 
\item $\tld{w}_h = w_0t_{-\eta} \in \tld{W}_1$; 
\item for $\tld{w} \in \tld{W}$, let $\tld{w}^\diamond$ denote an element in $X^*(T) \tld{w} \cap \tld{W}_1$ (which exists since the union of the restricted alcoves form a fundamental domain for the translation action of $X^*(T)$ on the set of all alcoves); it is unique up to $X^0(T)$. 
\end{itemize}
\item the set of $p$-alcoves of $X^*(T)\otimes_{\Z}\R$, i.e.~the set of connected components of 
\[
X^*(T)\otimes_{\Z}\R\setminus\bigcup_{n\in\Z,\alpha\in R}\{\lambda\in X^*(T)\otimes_{\Z}\R\mid \langle \lambda+\eta,\alpha^\vee\rangle=np\};
\]
\begin{itemize}
\item a left $p$-dot action of $\tld{W}$ on $X^*(T)$ defined by $(t_\nu w)\cdot \lambda\defeq p\nu+w(\lambda+\eta)-\eta$; this induces a $p$-dot action of $\tld{W}$ on the set of $p$-alcoves whose restriction to $W_a$ is simply transitive;
\item the dominant $p$-alcoves, i.e.~alcoves $C$ such that $0<\langle \lambda+\eta,\alpha^\vee\rangle$ for all $\alpha\in \Delta, \lambda\in C$;
\item the lowest (dominant) $p$-alcove $C_0\subset X^*(T)\otimes_{\Z}\R$ characterized by $\lambda\in C_0$ if $0<\langle \lambda+\eta,\alpha^\vee\rangle<p$ for all $\alpha\in R^+$;
\item the $p$-restricted alcoves, i.e.~alcoves $C$ such that $0<\langle \lambda+\eta,\alpha^\vee\rangle<p$ for all $\alpha\in \Delta, \lambda\in C$;
\end{itemize}
\item the Bruhat order $\leq$ on $W_a$ with respect to $A_0$ (i.e.~using the reflections across walls of $A_0$ as a set of Coxeter generators);
\begin{itemize}
\item for $\tld{w} \in \tld{W}$, let $\tld{W}_{\leq \tld{w}} = \{\tld{u}\in \tld{W} \mid \tld{u} \leq \tld{w}\}$; 
\item for $\lambda \in X^*(T)$, let $\Adm(\lambda) = \cup_{w\in W} \tld{W}_{\leq t_{w(\lambda)}}$;
\item the $\uparrow$ order on the set of $p$-alcoves defined in \cite[II.6.5]{RAGS};
\item the $\uparrow$ order on $W_a$ induced from the ordering $\uparrow$ on the set of $p$-alcoves (via the bijection $\tld{w}\mapsto\tld{w}\cdot C_0$);
\item the Bruhat order on $\tld{W}=W_a\rtimes\Omega$ defined by $\tld{w}\delta\leq \tld{w}'\delta'$ if and only if $\tld{w}\leq \tld{w}'$ and $\delta = \delta'$ where $\delta,\delta'\in\Omega$ and $\tld{w},\tld{w}'\in W_a$;
\item the $\uparrow$ order on $\tld{W}$ defined by $\tld{w}\delta\uparrow \tld{w}'\delta'$ if and only if $\tld{w}\uparrow \tld{w}'$ and $\delta = \delta'$ where $\delta,\delta'\in\Omega$ and $\tld{w},\tld{w}'\in W_a$;
\end{itemize}
\end{itemize}
We will assume throughout that $h_\eta < p$ so that $C_0$ is nonempty. 
\subsection{Combinatorial lemmas}

\begin{lemma}\label{lemma:reduced1}
Let $\tld{w}_2 \in \tld{W}_1$ and $\tld{w}_1 \in \tld{W}^+$ such that $\tld{w}_1 \uparrow \tld{w}_h^{-1} \tld{w}_2$. Let $\alpha$ be a simple root. Then the factorization $\tld{w}_2^{-1} (s_\alpha w_0) \tld{w}_1$ is reduced.
\end{lemma}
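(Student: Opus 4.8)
The plan is to prove the statement purely at the level of lengths in $\tld{W}$: I want to show $\ell(\tld{w}_2^{-1}(s_\alpha w_0)\tld{w}_1) = \ell(\tld{w}_2) + \ell(s_\alpha w_0) + \ell(\tld{w}_1)$, noting $\ell(s_\alpha w_0) = \ell(w_0) - 1$ since $w_0$ is the longest element of $W$. I would use the standard dictionary: writing $N(\tld{w})$ for the inversion set of $\tld{w}\in\tld{W}$ (so $\ell(\tld{w}) = \#N(\tld{w})$), recall that $N(\tld{w}^{-1})$ is the set of affine hyperplanes separating $A_0$ from $\tld{w}(A_0)$, that $\tld{u}\tld{v}$ is reduced exactly when $N(\tld{u})\cap N(\tld{v}^{-1}) = \emptyset$, and that in that case $N(\tld{u}\tld{v}) = \tld{v}^{-1}N(\tld{u})\sqcup N(\tld{v})$.

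First I would observe that the two ``halves'' are reduced for free. Since $\tld{w}_1\in\tld{W}^+$, the alcove $\tld{w}_1(A_0)$ is dominant, and since $A_0$ is the lowest alcove one has $\langle\lambda,\gamma^\vee\rangle\in(0,1)$ for all $\lambda\in A_0$ and $\gamma\in R^+$; hence every hyperplane separating $A_0$ from $\tld{w}_1(A_0)$ is of the form $\{\langle\cdot,\gamma^\vee\rangle = m\}$ with $\gamma\in R^+$ and $m\geq 1$, so $N(\tld{w}_1^{-1})$ contains no finite root. As $N(s_\alpha w_0)$ consists of finite roots, $(s_\alpha w_0)\tld{w}_1$ is reduced. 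The identical argument with $\tld{w}_2\in\tld{W}^+$ shows $\tld{w}_2^{-1}(s_\alpha w_0)$ is reduced and that $N(\tld{w}_2^{-1})$ consists of hyperplanes $\{\langle\cdot,\gamma'^\vee\rangle = m'\}$ with $\gamma'\in R^+$, $m'\geq 1$; and since $\tld{w}_2(A_0)$ is \emph{restricted}, no such $\gamma'$ is simple (a wall $\langle\cdot,\beta^\vee\rangle = m'\geq 1$ with $\beta\in\Delta$ keeps both $A_0$ and $\tld{w}_2(A_0)$ on its lower side), so $\gamma'\in R^+\setminus\Delta$.

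The remaining (and only real) task is to combine these, i.e.\ to check $N(\tld{w}_2^{-1}(s_\alpha w_0))\cap N(\tld{w}_1^{-1}) = \emptyset$. By the formula above, $N(\tld{w}_2^{-1}(s_\alpha w_0)) = (w_0 s_\alpha)N(\tld{w}_2^{-1})\sqcup N(s_\alpha w_0)$. The second summand is a set of finite roots, hence disjoint from $N(\tld{w}_1^{-1})$. For the first summand: a hyperplane in $N(\tld{w}_2^{-1})$ is $\{\langle\cdot,\gamma'^\vee\rangle = m'\}$ with $\gamma'\in R^+\setminus\Delta$ and $m'\geq 1$; since $\gamma'\neq\alpha$ we get $s_\alpha\gamma'\in R^+$, so $w_0 s_\alpha\gamma'\notin R^+$, and writing $\sigma := -w_0 s_\alpha\gamma'\in R^+$ the $(w_0 s_\alpha)$-image of that hyperplane is $\{\langle\cdot,\sigma^\vee\rangle = -m'\}$ with $-m'\leq -1$. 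Such a hyperplane does not separate the two dominant alcoves $A_0$ and $\tld{w}_1(A_0)$, so it does not lie in $N(\tld{w}_1^{-1})$. This gives the disjointness and hence the lemma.

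The hard part is nothing deep, just the bookkeeping in the last paragraph: one has to keep straight which affine hyperplanes can occur in $N(\tld{w}_1^{-1})$ (``dominant-type'' walls $\langle\cdot,\gamma^\vee\rangle = m$, $\gamma\in R^+$, $m\geq 1$) versus in $N(\tld{w}_2^{-1}(s_\alpha w_0))$, and the two decisive inputs are that $\tld{w}_2(A_0)$ is restricted (so the simple root $\alpha$ never enters $N(\tld{w}_2^{-1})$) and the elementary fact that $w_0 s_\alpha$ maps $R^+\setminus\{\alpha\}$ outside $R^+$. One small thing to be careful about is the formula $N(\tld{u}\tld{v}) = \tld{v}^{-1}N(\tld{u})\sqcup N(\tld{v})$ for reduced products. (I note that this argument uses only $\tld{w}_1\in\tld{W}^+$ and $\tld{w}_2\in\tld{W}_1$; it does not seem to use the hypothesis $\tld{w}_1\uparrow\tld{w}_h^{-1}\tld{w}_2$, which I would expect to be present for the sake of how the lemma is applied rather than for its proof.)
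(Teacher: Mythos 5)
Your proof is correct, and it takes a genuinely different route from the paper's. The paper constructs a minimal gallery from $w_0\tld{w}_1(A_0)$ to $s_\alpha\tld{w}_2(A_0)$ by concatenating three galleries in the $\id$-direction and then transports it by $\tld{w}_2^{-1}s_\alpha$, relying on the gallery formalism of \cite{HC}; you instead compute inversion sets of affine hyperplanes directly and check the required disjointness by hand. The underlying geometry is the same---in both arguments the decisive inputs are that $\tld{w}_1(A_0)$ is dominant (so no root hyperplane through the origin separates it from $A_0$) and that $\tld{w}_2(A_0)$ is restricted, hence lies in the same $\alpha$-strip as $A_0$ (so no $\alpha$-hyperplane separates them)---but your version is more elementary and self-contained, avoiding the gallery machinery of \cite{HC}. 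Your parenthetical observation that the hypothesis $\tld{w}_1 \uparrow \tld{w}_h^{-1}\tld{w}_2$ is not used is also correct and matches the paper's proof, which does not use it either. One small remark: in the last paragraph you only need the weaker fact that $\gamma' \neq \alpha$ (i.e.\ that no $\alpha$-hyperplane lies in $N(\tld{w}_2^{-1})$), not that all simple-root hyperplanes are absent, though the stronger statement is true and costs nothing.
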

\begin{proof}
Choose minimal galleries $G_1$ and $G_2$ to $\tld{w}_1(A_0)$ and $\tld{w}_2(A_0)$ from the dominant base alcove \cite[\S 2]{HC}.
These are galleries in the $\id$-direction \cite[Definition 5.2 and Lemma 5.3]{HC}. 
Since the length of a minimal gallery coincides with the number of separating hyperplanes, a minimal gallery cannot cross a single hyperplane twice. 
As $\tld{w}_2\in \tld{W}_1$, $\tld{w}_2(A_0)$ and $A_0$ are in the same $\alpha$-strip ($\langle x,\alpha^\vee \rangle \in (0,1)$ and $\langle \tld{w}_2(x),\alpha^\vee \rangle \in (0,1)$ for any $x\in A_0$) and so the minimality of $G_2$ implies that $G_2$ does not cross any $\alpha$-hyperplanes. 
In particular, $G_2$ is also a gallery in the $s_{\alpha}$-direction because any separating hyperplane $H$ is defined by a positive root $\beta \neq \alpha$ so that $s_{\alpha}(\beta)>0$. 
Choose a minimal gallery $G_3$ (necessarily in the $\id$-direction) from $w_0 (A_0)$ to $s_\alpha (A_0)$.
Then the concatenation of the reverse of $w_0 (G_1)$ followed by $G_3$ and then $s_\alpha(G_2)$, all galleries in the $\id$-direction, is a gallery in the $\id$-direction from $w_0 \tld{w}_1(A_0)$ to $s_\alpha\tld{w}_2(A_0)$ and hence is minimal \cite[Lemma 5.3]{HC}.
Applying $\tld{w}_2^{-1}s_\alpha$ to this minimal gallery, we see that \[\ell(\tld{w}_2^{-1} s_\alpha w_0 \tld{w}_1) = \ell(\tld{w}_2^{-1})+ \ell(s_\alpha w_0) +\ell(\tld{w}_1).\]
\end{proof}

\begin{lemma}\label{lemma:omega}
Let $\tld{w}_2 \in \tld{W}_1$, $\alpha \in \Delta$, and $\omega \in X^*(T)$ be such that $(s_\alpha \tld{w}_2)^\diamond = t_\omega s_\alpha\tld{w}_2 \in \tld{W}_1$. 
Then 
\begin{enumerate}
\item \label{item:omegaalpha} $\langle \omega,\alpha^\vee\rangle = 1$; 
\item \label{item:omegabeta} if $\beta$ is a simple root orthogonal to $\alpha$, then $\langle \omega,\beta^\vee\rangle = 0$; 
\item \label{item:omegagamma} if $\beta$ is a simple root with $\langle \beta,\alpha^\vee \rangle \leq 0$, then $\langle \omega,\beta^\vee\rangle \leq 0$; and
\item \label{item:omegagamma'} if $\gamma$ is a positive root and $\langle \omega_\alpha,\gamma^\vee\rangle \leq 1$, then $\langle \omega,\gamma^\vee\rangle \leq 1$. 
\end{enumerate} 
\end{lemma}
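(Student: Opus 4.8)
The plan is to extract the value of $\omega$ directly from the alcove geometry. Writing $\tld{w}_2 \in \tld{W}_1$, the alcove $A \defeq s_\alpha \tld{w}_2(A_0)$ is obtained from the restricted alcove $\tld{w}_2(A_0)$ by reflecting across the $\alpha$-wall through the origin; the hypothesis that $(s_\alpha\tld{w}_2)^\diamond = t_\omega s_\alpha\tld{w}_2 \in \tld{W}_1$ says precisely that $t_\omega$ translates $A$ back into the fundamental domain of restricted alcoves, i.e.~$t_\omega(A)$ is restricted. So the first step is to make precise, in terms of the coordinates $\langle \cdot, \beta^\vee\rangle$ for $\beta \in \Delta$, what ``reflect across the origin $\alpha$-wall, then translate by the unique $\omega$ making it restricted again'' does. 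Since $\tld{w}_2(A_0)$ is restricted, for $x$ in this alcove we have $0 < \langle x,\beta^\vee\rangle < 1$ for all $\beta \in \Delta$. Applying $s_\alpha$ negates the $\alpha$-coordinate and, for a general simple $\beta$, sends $\langle x,\beta^\vee\rangle \mapsto \langle x,\beta^\vee\rangle - \langle\alpha,\beta^\vee\rangle\langle x,\alpha^\vee\rangle$. Then $t_\omega$ shifts the $\beta$-coordinate by $\langle\omega,\beta^\vee\rangle$, and we need the result to again lie strictly between $0$ and $1$ for every simple $\beta$.

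With this setup, each item is a bookkeeping computation with the coordinate ranges:
\begin{itemize}
\item For \eqref{item:omegaalpha}: the $\alpha$-coordinate of $s_\alpha\tld{w}_2(x)$ lies in $(-1,0)$, and adding $\langle\omega,\alpha^\vee\rangle$ must land it in $(0,1)$; an integer shift of an interval of length $1$ sitting in $(-1,0)$ into $(0,1)$ forces the shift to be $1$.
\item For \eqref{item:omegabeta}: if $\langle\beta,\alpha^\vee\rangle = 0$ (true whenever $\beta \perp \alpha$ among simple roots, since the Cartan integers vanish in both directions), then $s_\alpha$ fixes the $\beta$-coordinate, which already lies in $(0,1)$; the integer shift $\langle\omega,\beta^\vee\rangle$ keeping it in $(0,1)$ must be $0$.
\item For \eqref{item:omegagamma}: if $\langle\beta,\alpha^\vee\rangle \le 0$, then since $\langle x,\alpha^\vee\rangle > 0$, the $\beta$-coordinate \emph{increases} under $s_\alpha$, landing in an interval of length $1$ with left endpoint $\ge 0$; to bring it back into $(0,1)$ the shift $\langle\omega,\beta^\vee\rangle$ must be $\le 0$.
\item For \eqref{item:omegagamma'}: here $\gamma$ is a general positive root, not simple, so one cannot argue alcove-wall by alcove-wall. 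Instead write $\langle \omega,\gamma^\vee\rangle$ using $\gamma^\vee$ expanded in simple coroots, or better: observe $s_\alpha\tld{w}_2(A_0)$ and $t_\omega s_\alpha\tld{w}_2(A_0)$ are both in the \emph{upper} closure with respect to $\gamma$ governed by $\omega_\alpha$; concretely, $\langle s_\alpha \tld{w}_2(x) + \eta,\gamma^\vee\rangle$ and $\langle t_\omega s_\alpha\tld{w}_2(x)+\eta,\gamma^\vee\rangle$ differ by the integer $\langle\omega,\gamma^\vee\rangle$, and the hypothesis $\langle\omega_\alpha,\gamma^\vee\rangle \le 1$ pins down which $p$-alcove strip the reflected alcove occupies, forcing $\langle\omega,\gamma^\vee\rangle \le 1$.
\end{itemize}

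The routine parts are \eqref{item:omegaalpha}--\eqref{item:omegagamma}: each is a one-line interval-chasing argument once the coordinate formula for $t_\omega s_\alpha \tld{w}_2$ acting on a restricted alcove is written down, using that $\langle\omega,\beta^\vee\rangle \in \Z$ for all $\beta \in \Delta$ and that both the source and target alcove are restricted. I expect the main obstacle to be \eqref{item:omegagamma'}, since $\gamma$ ranges over \emph{all} positive roots rather than simple ones, so the clean ``reflect one coordinate'' picture is unavailable; the fix is to note that $s_\alpha$ moves a positive root $\gamma \ne \alpha$ to another positive root unless $\gamma$ involves $\alpha$, and to track the effect of the translation $\omega$ on $\langle\cdot,\gamma^\vee\rangle$ through the decomposition $\gamma^\vee = \sum_{\beta\in\Delta} c_\beta \beta^\vee$ with $c_\beta \ge 0$, combining items \eqref{item:omegaalpha}--\eqref{item:omegagamma} — the bound $\langle\omega_\alpha,\gamma^\vee\rangle = c_\alpha \le 1$ is exactly what makes the $\alpha$-contribution $c_\alpha\langle\omega,\alpha^\vee\rangle = c_\alpha$ small enough, while the other $c_\beta\langle\omega,\beta^\vee\rangle$ are $\le 0$ by \eqref{item:omegagamma} whenever $\langle\beta,\alpha^\vee\rangle \le 0$ and need separate (small) handling for the finitely many $\beta$ adjacent to $\alpha$ in the Dynkin diagram. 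Alternatively, and perhaps more cleanly, one deduces \eqref{item:omegagamma'} by applying the alcove characterization of $\tld{W}_1$ directly to the wall $\langle x+\eta,\gamma^\vee\rangle = $ (appropriate integer), sidestepping the coroot expansion entirely.
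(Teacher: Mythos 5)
Your proposal is substantively the same as the paper's proof, which also proceeds by writing, for $x\in A_0$ and $\beta\in\Delta$,
\[
\langle (s_\alpha\tld{w}_2)^\diamond(x),\beta^\vee\rangle - \langle \omega,\beta^\vee\rangle = \langle \tld{w}_2(x), s_\alpha(\beta)^\vee\rangle = \langle \tld{w}_2(x), \beta^\vee - \langle\beta,\alpha^\vee\rangle\alpha^\vee\rangle
\]
and chasing the interval constraints coming from $\tld{w}_2,(s_\alpha\tld{w}_2)^\diamond\in\tld{W}_1$. Items \eqref{item:omegaalpha}--\eqref{item:omegagamma} are handled exactly as you say (your ``interval of length $1$'' phrasing in \eqref{item:omegagamma} is a bit loose since $s_\alpha(\beta)$ need not be simple, but the pointwise inequality $\langle(s_\alpha\tld{w}_2)^\diamond(x),\beta^\vee\rangle - \langle\omega,\beta^\vee\rangle \geq \langle\tld{w}_2(x),\beta^\vee\rangle > 0$ combined with $\langle(s_\alpha\tld{w}_2)^\diamond(x),\beta^\vee\rangle < 1$ makes it rigorous).

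For \eqref{item:omegagamma'}, you overcomplicate the final step and plant a small false obstacle. The coroot decomposition $\gamma^\vee=\sum_{\beta\in\Delta}\langle\omega_\beta,\gamma^\vee\rangle\beta^\vee$ with nonnegative coefficients (as $\gamma>0$) immediately gives
\[
\langle\omega,\gamma^\vee\rangle = \sum_{\beta\in\Delta}\langle\omega_\beta,\gamma^\vee\rangle\,\langle\omega,\beta^\vee\rangle,
\]
and there is no case that needs ``separate handling'': any two \emph{distinct} simple roots $\beta\neq\alpha$ automatically satisfy $\langle\beta,\alpha^\vee\rangle\leq 0$ (this is exactly the sign condition on off-diagonal Cartan integers, equivalently $\alpha$ and $\beta$ span an obtuse or right angle). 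So \eqref{item:omegagamma} applies to \emph{every} simple $\beta\neq\alpha$, adjacent to $\alpha$ or not, giving $\langle\omega,\beta^\vee\rangle\leq 0$; combined with $\langle\omega_\alpha,\gamma^\vee\rangle\leq 1$ and $\langle\omega,\alpha^\vee\rangle=1$ from \eqref{item:omegaalpha} this yields $\langle\omega,\gamma^\vee\rangle\leq 1$ with no further work. The ``alternative'' route you float at the end---reading $\tld{W}_1$ off walls $\langle x,\gamma^\vee\rangle=m$ for non-simple $\gamma$---is not available: membership in $\tld{W}_1$ is a constraint on the simple walls only, so the clean interval picture for simple $\beta$ has no analogue for general positive $\gamma$, which is precisely why the decomposition into simple coroots is the right tool here.
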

\begin{proof}
Let $x$ be in $A_0$ and $\beta$ be a simple root. 
Then 
\[
\langle (s_\alpha \tld{w}_2)^\diamond (x),\beta^\vee \rangle - \langle \omega,\beta^\vee\rangle = \langle s_\alpha\tld{w}_2 (x),\beta^\vee \rangle = \langle \tld{w}_2(x),s_\alpha(\beta^\vee)\rangle = \langle \tld{w}_2(x),\beta^\vee-\langle \beta,\alpha^\vee\rangle\alpha^\vee\rangle
\]
and $\langle (s_\alpha\tld{w}_2)^\diamond(x),\beta^\vee\rangle \in (0,1)$ since $(s_\alpha\tld{w}_2)^\diamond \in \tld{W}_1$ and $\beta\in \Delta$.  

When $\beta = \alpha$, then $\langle s_\alpha\tld{w}_2 (x),\alpha^\vee \rangle = \langle \tld{w}_2 (x),-\alpha^\vee \rangle \in (-1,0)$ and hence $\langle \omega,\alpha^\vee \rangle \in (0,2)$, so that \eqref{item:omegaalpha} follows. 

When $\langle \beta,\alpha^\vee\rangle = 0$, then $\langle s_\alpha\tld{w}_2 (x),\beta^\vee \rangle= \langle \tld{w}_2 (x),\beta^\vee \rangle \in (0,1)$ (recall that $\tld{w}_2\in \tld{W}_1$), and hence $\langle \omega,\alpha^\vee \rangle \in (-1,1)$ so that \eqref{item:omegabeta} follows. 

When $\langle \beta,\alpha^\vee\rangle \leq 0$, then 
\[
\langle (s_\alpha\tld{w}_2)^\diamond (x),\beta^\vee \rangle - \langle \omega,\beta^\vee\rangle = \langle \tld{w}_2(x),\beta^\vee-\langle \beta,\alpha^\vee\rangle\alpha^\vee\rangle \geq \langle \tld{w}_2(x),\beta^\vee\rangle, 
\]
and hence $\langle \omega,\alpha^\vee \rangle <1$ so that \eqref{item:omegagamma} follows. 

Now let $\gamma$ be a root, so that $\gamma = \sum_{\beta \in\Delta} \langle \omega_\beta,\gamma^\vee\rangle \beta$. 
If $\langle \omega_\alpha ,\gamma^\vee \rangle \leq 1$ and $\gamma > 0$, then \eqref{item:omegagamma'} follows from \eqref{item:omegaalpha} and \eqref{item:omegagamma}. 
\end{proof}

\begin{lemma}\label{lemma:reduced2}
Let $\alpha$ be a simple root such that $h_{\omega_\alpha} = 1$. 
If $\tld{w}_2 \in \tld{W}_1$, then
\[(s_\alpha\tld{w}_2)^{\diamond,-1}((s_\alpha\tld{w}_2)^\diamond \tld{w}_2^{-1} s_\alpha w_0)\] is a reduced factorization of $\tld{w}_2^{-1} s_\alpha w_0$.
\end{lemma}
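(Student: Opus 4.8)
The plan is to reduce the statement to a combinatorial fact about alcoves and affine hyperplanes, in the same spirit as the proof of Lemma~\ref{lemma:reduced1}. Write $\tld{w}_2' \defeq (s_\alpha \tld{w}_2)^\diamond$ and let $\omega \in X^*(T)$ be as in Lemma~\ref{lemma:omega}, so that $\tld{w}_2' = t_\omega s_\alpha \tld{w}_2$. The first step is the bookkeeping identity
\[
(s_\alpha\tld{w}_2)^\diamond\,\tld{w}_2^{-1} s_\alpha w_0 = t_\omega s_\alpha \tld{w}_2 \tld{w}_2^{-1} s_\alpha w_0 = t_\omega w_0,
\]
so the claimed factorization is literally $\tld{w}_2^{-1} s_\alpha w_0 = (\tld{w}_2')^{-1}\cdot(t_\omega w_0)$, and ``reduced'' means precisely
\[
\ell(\tld{w}_2^{-1}s_\alpha w_0) = \ell(\tld{w}_2') + \ell(t_\omega w_0)
\]
(using $\ell((\tld{w}_2')^{-1}) = \ell(\tld{w}_2')$). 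As $\ell$ is subadditive, only the inequality ``$\geq$'' needs to be proved.

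Next I would recall the hyperplane criterion for a factorization to be reduced: for $\tld{x}, \tld{y} \in \tld{W}$, if $S_1$ is the set of affine hyperplanes separating $A_0$ from $\tld{x}(A_0)$ and $S_2$ the set separating $\tld{x}(A_0)$ from $\tld{x}\tld{y}(A_0)$, then $|S_1| = \ell(\tld{x})$, $|S_2| = \ell(\tld{y})$, the set of hyperplanes separating $A_0$ from $\tld{x}\tld{y}(A_0)$ is the symmetric difference $S_1 \mathbin{\triangle} S_2$, and hence $\ell(\tld{x}\tld{y}) = \ell(\tld{x})+\ell(\tld{y})$ if and only if $S_1 \cap S_2 = \emptyset$. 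This is essentially the content of the gallery minimality statement used in the proof of Lemma~\ref{lemma:reduced1}: $S_1\cap S_2=\emptyset$ says exactly that the concatenation of a minimal gallery to $\tld{x}(A_0)$ with $\tld{x}$ applied to a minimal gallery to $\tld{y}(A_0)$ crosses no hyperplane twice. Applying $\tld{w}_2'$ to all hyperplanes in sight, the factorization $(\tld{w}_2')^{-1}\cdot(t_\omega w_0)$ is reduced if and only if no affine hyperplane separates $A_0$ from $\tld{w}_2'(A_0)$ while also separating $A_0$ from $t_\omega w_0(A_0)$.

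To verify this I would analyze the alcove $t_\omega w_0(A_0) = \omega + w_0(A_0) = \omega - A_0$. Since $h_{\omega_\alpha} = 1$, for every $\gamma \in R^+$ we have $\langle \omega_\alpha, \gamma^\vee\rangle \leq h_{\omega_\alpha} = 1$, so Lemma~\ref{lemma:omega}\eqref{item:omegagamma'} gives $\langle \omega, \gamma^\vee \rangle \leq 1$. Every affine hyperplane has the form $H_{\gamma,n} = \{\lambda : \langle \lambda,\gamma^\vee\rangle = n\}$ for a unique $\gamma \in R^+$ and $n \in \Z$; on $A_0$ the function $\langle\cdot,\gamma^\vee\rangle$ takes values in $(0,1)$, and on $\omega - A_0$ it takes values in $(\langle\omega,\gamma^\vee\rangle - 1,\langle\omega,\gamma^\vee\rangle)$, an interval whose right endpoint is $\leq 1$. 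A short case check in the single variable $\langle\cdot,\gamma^\vee\rangle$ then shows that $H_{\gamma,n}$ can separate $A_0$ from $\omega - A_0$ only when $n \leq 0$, and that in that case $A_0$ lies in the half-space $\langle\cdot,\gamma^\vee\rangle > n$. On the other hand $\tld{w}_2' = (s_\alpha \tld{w}_2)^\diamond \in \tld{W}_1 \subseteq \tld{W}^+$, so $\tld{w}_2'(A_0)$ is a dominant alcove; since every positive coroot is a nonnegative integral combination of simple coroots, $\langle\cdot,\gamma^\vee\rangle$ is strictly positive on $\tld{w}_2'(A_0)$ for every $\gamma \in R^+$, hence $\tld{w}_2'(A_0)$ also lies in $\langle\cdot,\gamma^\vee\rangle > n$ for every $n \leq 0$. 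Thus no hyperplane separating $A_0$ from $\omega - A_0$ can separate $A_0$ from $\tld{w}_2'(A_0)$, the disjointness $S_1 \cap S_2 = \emptyset$ holds, and the factorization is reduced.

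The crux --- and the one place where the hypothesis $h_{\omega_\alpha} = 1$ is essential --- is the bound $\langle\omega,\gamma^\vee\rangle \leq 1$ for all positive roots $\gamma$: it forces $\omega - A_0$ to lie weakly below $A_0$ along every positive-coroot direction, confining all separating hyperplanes to $\{n \leq 0\}$, a region from which the dominant alcove $\tld{w}_2'(A_0)$ is automatically unseparated from $A_0$. Without it some $\langle\omega,\gamma^\vee\rangle$ could be $\geq 2$, producing separating hyperplanes with $n \geq 1$ on whose far side $\tld{w}_2'(A_0)$ need not lie. I expect the only mildly delicate part of the write-up to be this one-variable bookkeeping --- correctly enumerating which hyperplanes $H_{\gamma,n}$ lie strictly between two prescribed alcoves --- which is elementary provided the inequalities are tracked carefully.
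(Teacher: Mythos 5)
Your proof is correct, but it takes a genuinely different route from the paper. Both start with the observation $(s_\alpha\tld{w}_2)^\diamond\,\tld{w}_2^{-1}s_\alpha w_0 = t_\omega w_0$ and both use the key consequence $\langle\omega,\gamma^\vee\rangle \leq 1$ of $h_{\omega_\alpha}=1$ via Lemma~\ref{lemma:omega}\eqref{item:omegagamma'}. After that, the paper proceeds by explicit length computation: it introduces $m_\beta(\tld{w})$ (the number of $\beta$-hyperplanes separating $A_0$ from $\tld{w}(A_0)$), computes $\ell(t_\omega w_0) = \ell(w_0) - \sum_{\beta>0}\langle\omega,\beta^\vee\rangle$ and $\ell((s_\alpha\tld{w}_2)^\diamond) = \ell(\tld{w}_2) + \sum_{\beta>0}\langle\omega,\beta^\vee\rangle - 1$ root by root, adds, and compares with $\ell(\tld{w}_2^{-1}s_\alpha w_0) = \ell(w_0)+\ell(\tld{w}_2)-1$ obtained from Lemma~\ref{lemma:reduced1}. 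You instead apply the hyperplane/gallery disjointness criterion directly: no length gets computed, you simply observe that every hyperplane separating $A_0$ from $t_\omega w_0(A_0) = \omega - A_0$ is of the form $H_{\gamma,n}$ with $n\leq 0$ and $A_0$ on the $\{\langle\cdot,\gamma^\vee\rangle>n\}$ side, while the dominant alcove $(s_\alpha\tld{w}_2)^\diamond(A_0)$ also lies in $\{\langle\cdot,\gamma^\vee\rangle>0\geq n\}$, so the two sets of separating hyperplanes are disjoint. Your approach is shorter, does not depend on Lemma~\ref{lemma:reduced1}, and isolates the geometric content (one factor stays in the dominant cone while the other is confined to the non-dominant side); the paper's approach produces the explicit length formulas as a byproduct.
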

\begin{proof}
Let $\omega \in X^*(T)$ be such that $(s_\alpha\tld{w}_2)^\diamond = t_\omega s_\alpha \tld{w}_2$.
We now prove the lemma by computing lengths, starting with $(s_\alpha\tld{w}_2)^\diamond \tld{w}_2^{-1} s_\alpha w_0 = t_\omega w_0$.
Let $x\in A_0$ and $\beta$ be a positive root. 
Since $\langle \omega_\alpha,\beta^\vee\rangle\leq h_{\omega_\alpha} = 1$, Lemma \ref{lemma:omega}\eqref{item:omegagamma'} (with $\gamma = \beta$) implies that $\langle \omega,\beta^\vee\rangle \leq 1$. 
For $\tld{w} \in \tld{W}$ and $\beta \in R^+$, let $m_\beta(\tld{w})$ denote the number of $\beta$-hyperplanes separating $A_0$ and $\tld{w}(A_0)$ i.e.~$m_\beta(\tld{w}) = |\min_{x\in \ovl{A}_0} \langle \tld{w}(x),\beta^\vee \rangle|$. 
Then 
\begin{align*}
m_\beta(t_{\omega}w_0) &= |\min_{x\in \ovl{A}_0} \langle t_\omega w_0(x),\beta^\vee \rangle| \\
&= |\langle \omega,\beta^\vee\rangle + \min_{x\in \ovl{A}_0} \langle w_0(x),\beta^\vee \rangle| \\
&= |\langle \omega,\beta^\vee\rangle -1| \\
&= 1-\langle \omega,\beta^\vee\rangle 
\end{align*}
since $\langle \omega,\beta^\vee\rangle \leq 1$.
We have  
\begin{align*}
\ell(t_\omega w_0) &= \sum_{\beta > 0} m_\beta(t_{\omega}w_0) \\
&= \sum_{\beta > 0} (1-\langle  \omega,\beta^\vee\rangle)\\
&= \ell(w_0) - \sum_{\beta > 0} \langle  \omega,\beta^\vee\rangle. 
\end{align*}

We now compute the length of $(s_\alpha\tld{w}_2)^\diamond$.
For $\beta>0$ and $\beta \neq \alpha$ and $x\in A_0$, $\langle s_\alpha \tld{w}_2(x),\beta^\vee \rangle = \langle \tld{w}_2(x),s_\alpha(\beta)^\vee \rangle > 0$ and $\langle (s_\alpha\tld{w}_2)^\diamond(x),\beta^\vee \rangle > 0$ (since $\beta, s_\alpha(\beta) > 0$ and $\tld{w}_2,(s_\alpha\tld{w}_2)^\diamond\in \tld{W}_1$).
This implies that $m_\beta((s_\alpha\tld{w}_2)^\diamond) = m_\beta(s_\alpha\tld{w}_2) + \langle  \omega,\beta^\vee\rangle$.
On the other hand, using \ref{lemma:omega}\eqref{item:omegaalpha} we have 
\[
m_\alpha((s_\alpha\tld{w}_2)^\diamond) = 0 = \langle  \omega,\alpha^\vee \rangle - 1 = m_\alpha(s_\alpha \tld{w}_2) + \langle  \omega,\alpha^\vee \rangle - 2, 
\]
where the first equality is a consequence of the fact that $(s_\alpha\tld{w}_2)^\diamond \in \tld{W}_1$ and $\alpha$ is simple. 
Putting this together, we get
\begin{align*}
\ell((s_\alpha\tld{w}_2)^\diamond) &= \sum_{\beta > 0} m_\beta((s_\alpha\tld{w}_2)^\diamond) \\
&= -2+ \sum_{\beta > 0} (m_\beta(s_\alpha\tld{w}_2) + \langle \omega,\beta^\vee\rangle) \\
&=  \ell(s_\alpha \tld{w}_2) + \sum_{\beta > 0} \langle  \omega,\beta^\vee\rangle - 2 \\
&= \ell(\tld{w}_2) + \sum_{\beta > 0} \langle  \omega,\beta^\vee\rangle - 1 
\end{align*}
(note that $\ell(s_\alpha\tld{w}_2) = \ell(\tld{w}_2) + 1$ by \cite[Lemma 2.2.1]{OBW}).
Putting everything together, we have that
\[
\ell((s_\alpha\tld{w}_2)^\diamond \tld{w}_2^{-1} s_\alpha w_0) + \ell((s_\alpha\tld{w}_2)^{\diamond,-1}) = \ell(t_\omega w_0) + \ell((s_\alpha\tld{w}_2)^\diamond) = \ell(w_0)+ \ell(\tld{w}_2) - 1 = \ell(\tld{w}_2^{-1} s_\alpha w_0),
\]
where the last equality follows for example from Lemma \ref{lemma:reduced1}
\end{proof}

\begin{lemma}\label{lem:subregular shape bound}
Let $\alpha$ be a simple root such that $\langle \omega_\alpha,\beta^\vee\rangle \leq 1$ for all roots $\beta$. 
Let $\tld{w}_2 \in \tld{W}^+_1$ and $\tld{w}_1 \in \tld{W}^+$ such that $\tld{w}_1 \uparrow \tld{w}_h^{-1} \tld{w}_2$. 
Then $(s_\alpha\tld{w}_2)^\diamond \tld{w}_2^{-1} s_\alpha w_0 \tld{w}_1 \leq w_0\tld{w}_h^{-1} (s_\alpha\tld{w}_2)^\diamond$. 
\end{lemma}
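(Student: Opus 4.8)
The plan is to combine the length computations of Lemmas~\ref{lemma:reduced1} and~\ref{lemma:reduced2} with the standard compatibility between the Bruhat order and reduced factorizations. First I would record the reduced factorizations at our disposal. Write $\tld{w}_0' \defeq (s_\alpha\tld{w}_2)^\diamond = t_\omega s_\alpha\tld{w}_2$ with $\omega\in X^*(T)$ as in Lemma~\ref{lemma:omega}. Since $h_{\omega_\alpha} = 1$ forces in particular $\langle\omega_\alpha,\beta^\vee\rangle \le 1$ for all positive roots $\beta$, Lemma~\ref{lemma:reduced1} applies and gives that $\tld{w}_2^{-1}(s_\alpha w_0)\tld{w}_1$ is reduced, i.e.
\[
\ell\big(\tld{w}_2^{-1} s_\alpha w_0 \tld{w}_1\big) = \ell(\tld{w}_2^{-1}) + \ell(s_\alpha w_0) + \ell(\tld{w}_1).
\]
Likewise Lemma~\ref{lemma:reduced2} gives that $\tld{w}_0'^{-1}\big(\tld{w}_0'\tld{w}_2^{-1}s_\alpha w_0\big)$ is a reduced factorization of $\tld{w}_2^{-1}s_\alpha w_0 = t_\omega w_0 \cdot (\text{stuff})$; more precisely $\ell(\tld{w}_2^{-1}s_\alpha w_0) = \ell(\tld{w}_0'^{-1}) + \ell(\tld{w}_0'\tld{w}_2^{-1}s_\alpha w_0)$.

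Next I would assemble these into a single reduced expression. The goal element is $\tld{w}_0'\tld{w}_2^{-1}s_\alpha w_0\tld{w}_1$. Using the two displayed length identities, together with $\ell(w_0\tld{w}_0'^{-1}) = \ell(w_0) - \ell(\tld{w}_0'^{-1}) + \text{(correction from the }\Omega\text{-part)}$ — more carefully, comparing $\ell(\tld{w}_0'\tld{w}_2^{-1}s_\alpha w_0\tld{w}_1)$ against $\ell(w_0\tld{w}_h^{-1}\tld{w}_0')$ — I would show both sides have the same length only after the Bruhat inequality is accounted for, so the inequality is genuinely not an equality in general. The cleanest route: produce a reduced word for $w_0\tld{w}_h^{-1}\tld{w}_0'$ and exhibit the target as a subword. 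Concretely, I expect $w_0\tld{w}_h^{-1}\tld{w}_0' = w_0 t_\eta \tld{w}_0'$ and, using $\tld{w}_1\uparrow \tld{w}_h^{-1}\tld{w}_2$, one gets $\tld{w}_h^{-1}\tld{w}_2 \ge \tld{w}_1$ in the Bruhat order (the $\uparrow$ order refines $\le$ by \cite[II.6.5]{RAGS} applied within a single $\Omega$-coset), hence $\tld{w}_2^{-1}\tld{w}_h \le \tld{w}_1^{-1}$ and one can multiply on the appropriate side by the reduced factor coming from Lemma~\ref{lemma:reduced1}, preserving the inequality because of reducedness (lengths add).

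The key technical step, and the main obstacle, is the bookkeeping with the $\Omega$-part of $\tld{W} = W_a\rtimes\Omega$: the Bruhat order is only defined within a fixed $\Omega$-coset, so before any of the above I must check that all the elements $\tld{w}_0'\tld{w}_2^{-1}s_\alpha w_0\tld{w}_1$ and $w_0\tld{w}_h^{-1}\tld{w}_0'$ land in the same coset — this should follow from $\tld{w}_0', \tld{w}_h \in \tld{W}_1$ and $\tld{w}_1\uparrow\tld{w}_h^{-1}\tld{w}_2$ tracking the $\Omega$-component, but it needs to be said. Once that is settled, the argument is: (i) use the two reduced factorizations to write both sides via reduced words; (ii) translate $\tld{w}_1\uparrow\tld{w}_h^{-1}\tld{w}_2$ into a Bruhat inequality $\tld{w}_2\tld{w}_h^{-1} \le$ (something) — here I would invoke that $\uparrow$ implies $\le$ on $\tld{W}$, which is standard; (iii) right-multiply this inequality by the common reduced tail $s_\alpha w_0\tld{w}_1$ (resp.\ left-multiply by $\tld{w}_0'$), using that multiplication by an element with additive lengths is order-preserving on Bruhat order. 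Step (ii)–(iii) is where I'd be most careful: one must ensure the multiplications are by the *same* reduced factor on both sides and in a way compatible with the length additivity established in Lemmas~\ref{lemma:reduced1} and~\ref{lemma:reduced2}, otherwise the inequality can flip. I expect the final chain to read schematically
\[
\tld{w}_0'\tld{w}_2^{-1}s_\alpha w_0\tld{w}_1 \le \tld{w}_0'\tld{w}_2^{-1}s_\alpha w_0 \cdot \big(\tld{w}_2\tld{w}_h^{-1}\big) = w_0\tld{w}_h^{-1}(s_\alpha\tld{w}_2)^\diamond,
\]
where the inequality uses $\tld{w}_1 \le \tld{w}_h^{-1}\tld{w}_2$ hence $\tld{w}_1\uparrow\tld{w}_h^{-1}\tld{w}_2 \Rightarrow \tld{w}_1 \le \tld{w}_h^{-1}\tld{w}_2$ (which on $W_a$ gives $\tld{w}_1 \le \tld{w}_h^{-1}\tld{w}_2$ after clearing $\Omega$), and the final equality is a direct computation with $t_\omega$, $w_0$, $\tld{w}_h = w_0 t_{-\eta}$, and the defining relation $(s_\alpha\tld{w}_2)^\diamond = t_\omega s_\alpha\tld{w}_2$.
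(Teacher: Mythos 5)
Your overall skeleton — use the reduced factorizations from Lemmas~\ref{lemma:reduced1} and~\ref{lemma:reduced2}, translate $\uparrow$ into $\leq$, and then substitute along a common reduced prefix/suffix — is indeed the strategy the paper uses. But the concrete chain you write down does not close, for two reasons.

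First, the substitution in your final display is wrong: the inequality $\tld{w}_1 \leq \tld{w}_h^{-1}\tld{w}_2$ justifies replacing $\tld{w}_1$ by $\tld{w}_h^{-1}\tld{w}_2$, not by $\tld{w}_2\tld{w}_h^{-1}$ as you wrote (and the intermediate claim $\tld{w}_2^{-1}\tld{w}_h \leq \tld{w}_1^{-1}$ is the inversion of the true inequality). Second — and this is the real gap — even with the correct substitution, the resulting element is \emph{not} equal to the target. A direct computation gives
$(s_\alpha\tld{w}_2)^\diamond\,\tld{w}_2^{-1} s_\alpha w_0 \cdot \tld{w}_h^{-1}\tld{w}_2 = t_\omega w_0 \tld{w}_h^{-1}\tld{w}_2 = t_{\omega+w_0\eta}\tld{w}_2$,
whereas
$w_0\tld{w}_h^{-1}(s_\alpha\tld{w}_2)^\diamond = t_{w_0\eta}\,t_\omega s_\alpha\tld{w}_2 = t_{\omega+w_0\eta}\,s_\alpha\tld{w}_2$,
and these differ by a left factor of $s_\alpha$. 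Your proposed equality would force $s_\alpha = 1$. The paper bridges this exactly by two further moves that your chain omits: (i) the conjugation identity $(s_\alpha\tld{w}_2)^{\diamond,-1} s_\alpha w_0 \tld{w}_h^{-1}(s_\alpha\tld{w}_2)^\diamond = \tld{w}_2^{-1} s_\alpha w_0 \tld{w}_h^{-1}\tld{w}_2$, which is where Lemma~\ref{lemma:omega}\eqref{item:omegaalpha} enters — $\langle\omega,\alpha^\vee\rangle = 1 = -\langle w_0\eta,\alpha^\vee\rangle$ makes $s_\alpha$ fix $w_0\eta+\omega$ — and (ii) the additional Bruhat step $s_\alpha w_0 \leq w_0$ inside the reduced factorization $(s_\alpha\tld{w}_2)^{\diamond,-1}\cdot w_0 \cdot \tld{w}_h^{-1}(s_\alpha\tld{w}_2)^\diamond$ from \cite[Lemma 4.1.9]{LLL}. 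Without these two steps your argument cannot reach the stated right-hand side. (Your concern about the $\Omega$-parts is a fair sanity check but not the obstruction here; the substitutions the paper performs preserve the $\Omega$-component automatically once they are carried out correctly.)
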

\begin{proof}
By Lemmas \ref{lemma:reduced1} and \ref{lemma:reduced2}, we have that
\[
(s_\alpha\tld{w}_2)^{\diamond,-1}((s_\alpha\tld{w}_2)^\diamond \tld{w}_2^{-1} s_\alpha w_0)\tld{w}_1
\]
is a reduced factorization.
Then it suffices to show that 
\[
\tld{w}_2^{-1} s_\alpha w_0 \tld{w}_1 = (s_\alpha\tld{w}_2)^{\diamond,-1}((s_\alpha\tld{w}_2)^\diamond \tld{w}_2^{-1} s_\alpha w_0)\tld{w}_1 \leq (s_\alpha\tld{w}_2)^{\diamond,-1} w_0 \tld{w}_h^{-1} (s_\alpha\tld{w}_2)^\diamond 
\]
(use \cite[Lemma 4.1.9]{LLL} to get a reduced factorization of the right-most expression). 
We claim that  
\[
\tld{w}_2^{-1} s_\alpha w_0 \tld{w}_1 \leq \tld{w}_2^{-1} s_\alpha w_0 \tld{w}_h^{-1} \tld{w}_2 = (s_\alpha\tld{w}_2)^{\diamond,-1} s_\alpha w_0 \tld{w}_h^{-1} (s_\alpha\tld{w}_2)^\diamond \leq (s_\alpha\tld{w}_2)^{\diamond,-1} w_0 \tld{w}_h^{-1} (s_\alpha\tld{w}_2)^\diamond. 
\]
Indeed, we have the following. 
\begin{itemize}
\item The first inequality follows from reduced factorizations obtained by Lemma \ref{lemma:reduced1} and the inequality $\tld{w}_1 \leq \tld{w}_h^{-1} \tld{w}_2$ from \cite[Theorem 4.3]{Wang} (see also \cite[Theorem 4.1.1]{LLL}).
\item The last inequality follows from reduced factorizations obtained from \cite[Lemma 4.1.9]{LLL}. 
\item For the equality in the middle, let $\omega \in X^*(T)$ be such that $(s_\alpha\tld{w}_2)^\diamond = t_\omega s_\alpha\tld{w}_2$. Noting that $w_0\tld{w}_h^{-1} = t_{w_0\eta}$ and $s_\alpha(w_0\eta+\omega)=w_0\eta+\omega$ (since $\langle \omega, \alpha^\vee \rangle=-\langle w_0\eta,\alpha^\vee\rangle=1$ by Lemma \ref{lemma:omega}\eqref{item:omegaalpha} for the first equality and by definition of $\eta$ for the second), we write
\begin{align*}
(s_\alpha\tld{w}_2)^{\diamond,-1} s_\alpha w_0 \tld{w}_h^{-1} (s_\alpha\tld{w}_2)^\diamond&=\tld{w}_2^{-1}s_\alpha t_{-\omega} s_\alpha t_{w_0\eta}t_{\omega}s_\alpha \tld{w}_2=\\\tld{w}_2^{-1}s_\alpha t_{-\omega+s_\alpha(w_0\eta+\omega)} \tld{w}_2&=\tld{w}_2^{-1}s_\alpha  t_{w_0\eta}\tld{w}_2=\tld{w}_2^{-1} s_\alpha w_0 \tld{w}_h^{-1} \tld{w}_2  
\end{align*}
thus establishing the desired equality.
\end{itemize}
\end{proof}

\subsection{Serre weights}

Let $G_0$ be a connected reductive group over $\F_p$ which splits over the finite extension $\F$. 
Let $G$ be $G_0 \otimes_{\F_p} \F$, which we assume to have simply connected derived subgroup and connected center. 
Fix a maximal torus and Borel subgroup $T\subset B\subset G$. 
For $\lambda \in X(T)^+$,  there is a unique simple $\F[G]$-module $L(\lambda)$ with highest weight $\lambda$. 

We say that $\lambda \in X^*(T)$ is $m$-deep in its $p$-alcove if $|\langle \lambda+\eta,\alpha^\vee\rangle-np| >m$ for all $\alpha \in R$ and $n\in \Z$. 

Let $F$ be the relative $p$-Frobenius isogeny of $G$, which induces an endomorphism $F$ of $X^*(T)$. 
There is an automorphism $\pi$ of $X^*(T)$ such that $F=p\pi^{-1}$ on $X^*(T)$. 
We assume that the choice of $\eta \in X^*(T)$ is $\pi$-invariant. 

Let $\Gamma$ be $G_0(\F_p)$. 
A Serre weight (for $\Gamma$) is a simple $\F[\Gamma]$-module. 
If $\lambda \in X_1(T)$, let $F(\lambda) \defeq L(\lambda)|_\Gamma$. 
Then $F(\lambda)$ is a Serre weight and any Serre weight is of this form. 
Moreover, $F(\lambda) \cong F(\mu)$ if and only if $\lambda-\mu \in (p-\pi)X^0(T)$. 
We say that $F(\lambda)$ is $m$-deep in $p$-alcove $C$ if $\lambda$ is (and this is independent of the choice of $\lambda$). 

If $\lambda \in X_1(T)$ is in alcove $\tld{w} \cdot C_0$, let $d_{F(\lambda)} \defeq \max_{v\in \ovl{A}_0} h_{\tld{w}_h\tld{w}(v)}$. 
Note that $d_{F(\lambda)}\leq h_\eta$. 
\begin{defn} (Presentation of Serre weight)

For $\omega-\eta\in C_0\cap X^*(T)$ and $\tld{w}\in\tld{W}_1$, let
\begin{equation}
F_{(\tld{w},\omega)}\defeq F(\pi^{-1}(\tld{w})\cdot (\omega-\eta)).
\end{equation}
\end{defn}
We say that $(\tld{w},\omega)$ is a presentation of a Serre weight $\sigma$ if 
$\sigma\cong F_{(\tld{w},\omega)}$. 
We say that $\sigma$ is $m$-deep if it admits a presentation $(\tld{w},\omega)$ where $\omega-\eta$ is $m$-deep in $C_0$. 
Note that not all Serre weights are $0$-deep and thus may not have presentations in the above sense. 

\subsection{Deligne--Lusztig representations}

For $(s,\mu) \in W\times X^*(T)$, one defines an $F$-stable maximal torus $T_s$ and a character $\Theta(s,\mu)$ of $\Gamma \cap T_s$ as in \cite[\S 3]{DLR}. 
Deligne--Lusztig induction yields a virtual $\Gamma$-representation $R_s(\mu)$ over $E$ (again see \cite[\S 3]{DLR}). In this paper, we will also abbreviate $R_s(\mu)=R(t_\mu s)$ and hence make sense of the notation $R(\tld{w})$ for $\tld{w}\in \tld{W}$. 
We call either the choice of $(s,\mu)$ or $\tld{w}$ a presentation of $R(\tld{w})$. 
We warn the reader that this notion of presentation differs by an $\eta$-shift from e.g.~\cite{LLL,MLM} and agrees with \cite{FLH}. 

We will also use the notation $\ovl{R}(\tld{w})$ to denote the mod $\varpi$ reduction of any $\cO$-lattice in $R(\tld{w})$. This is not a well defined $\F[\Gamma]$-module, but its class in the Grothendieck group of finite length $\F[\Gamma]$-module is well defined, and hence invariants such as Jordan-H\"older factors are well-defined.

We say that $(s,\mu)$ is maximally split if the pair $(T_s,\Theta(s,\mu))$ is. 
If $(s,\mu)$ is maximally split, then $R_s(\mu)$ is an actual representation. 
The main examples of such maximally split $(s,\mu)$ that we need are when either $s = 1$ or $\mu-\eta$ is $0$-deep in $C_0$.
If $R \cong R(\tld{w})$ such that $\tld{w}(0)-\eta$ is $m$-deep in $C_0$, we say that $R$ is an $m$-generic Deligne--Lusztig representation. 

\begin{lemma}\label{lemma:0gen}
Recall that we assume that $G$ has connected center. 
Suppose that $\mu-\eta,\lambda-\eta \in C_0$, $\mu-\lambda \in \Z R$, $s,w\in W$, and $R_s(\mu) \cong R_w(\lambda)$. 
Then $\mu = \lambda$ and $s = w$. 
\end{lemma}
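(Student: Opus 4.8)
The plan is to convert the isomorphism $R_s(\mu)\cong R_w(\lambda)$ into a $\Gamma$-conjugacy of the associated pairs $(T_s,\Theta(s,\mu))$ and $(T_w,\Theta(w,\lambda))$ via standard Deligne--Lusztig theory, and then to deduce $\mu=\lambda$ and $s=w$ from a combinatorial rigidity argument using all three hypotheses (the genericity $\mu-\eta,\lambda-\eta\in C_0$, the condition $\mu-\lambda\in\Z R$, and the connected center).

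First I would note that $\mu-\eta\in C_0\cap X^*(T)$ is $0$-deep in $C_0$ (its coroot pairings are integers strictly between $0$ and $p$), so that $(s,\mu)$ is maximally split; combined with $0$-deepness, $\Theta(s,\mu)$ is then in general position, and by the standard properties of Deligne--Lusztig induction (where the connected-center hypothesis enters) $R_s(\mu)$ is a genuine irreducible $E[\Gamma]$-representation, and likewise for $R_w(\lambda)$. The isomorphism $R_s(\mu)\cong R_w(\lambda)$ then forces the Deligne--Lusztig inner product $\langle R_s(\mu),R_w(\lambda)\rangle_\Gamma$ to be nonzero, which by the orthogonality relations is equivalent to $(T_s,\Theta(s,\mu))$ and $(T_w,\Theta(w,\lambda))$ being conjugate under $\Gamma=G(\F_p)$.

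Next I would unwind this $\Gamma$-conjugacy using the explicit descriptions of $T_s$ and $\Theta(s,\mu)$ in \cite[\S 3]{DLR}: conjugacy of the tori $T_s$ and $T_w$ forces $s$ and $w$ to be $F$-conjugate in $W$, say $w=v^{-1}s\,F(v)$ for some $v\in W$, and the identification of the characters under this conjugation becomes a congruence $\lambda\equiv v^{-1}(\mu)\pmod{(wF-\id)X^*(T)}$. Setting $\xi\defeq\lambda-v^{-1}(\mu)$, we then have $\xi\in(wF-\id)X^*(T)$, and also $\xi\in\Z R$ since $\lambda-\mu\in\Z R$ by hypothesis while $v^{-1}(\mu)-\mu\in\Z R$ automatically.

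The remaining step, which is the crux, is to extract $s=w$ and $\mu=\lambda$ from these constraints. I would argue by pairing everything against coroots. On one side, since $\mu$ and $\lambda$ have all positive-coroot pairings in $(0,p)$, every coroot pairing of $\xi=\lambda-v^{-1}(\mu)$ lies in the window $(-p,2p)$; on the other side, since $wF=p\,w\pi^{-1}$ scales $X^*(T)$ by $p$, any nonzero element of $(wF-\id)X^*(T)$ has some coroot pairing of absolute value comparable to $p$. These size estimates alone do not quite suffice, and the essential extra input is $\xi\in\Z R$: writing $\xi=(wF-\id)\nu$ and expanding in coordinates, membership of $\xi$ in the root lattice forces the integer parameters entering $\nu$ to satisfy a divisibility/sum-zero relation; this relation, together with the $C_0$-inequalities for $\mu$ and $\lambda$ and with the dominance and regularity of $\mu$, rules out every solution other than $\mu=\lambda$, $s=w$. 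Making this diophantine bookkeeping clean and uniform in the root datum is the principal obstacle; alternatively, one may hope to cite the corresponding rigidity statement for presentations of Deligne--Lusztig representations directly from \cite[\S 3]{DLR}.
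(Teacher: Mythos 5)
Your set-up matches the paper's: the paper also begins by citing Herzig's parametrization (from \cite[\S 4.1]{herzig-duke}) of when two Deligne--Lusztig representations coincide, obtaining $w=\sigma s\pi(\sigma)^{-1}$ and $\lambda=\sigma(\mu)+p\nu-w\pi\nu$ for some $\sigma\in W$, $\nu\in X^*(T)$. You correctly identify the remaining combinatorial step as the crux. However, you do not actually carry it out --- you write that ``these size estimates alone do not quite suffice'' and that ``making this diophantine bookkeeping clean and uniform in the root datum is the principal obstacle,'' and you fall back on the possibility of citing a rigidity result from \cite{DLR}. This is a genuine gap: the paper's proof of that combinatorial step is substantial and not a routine estimate. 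It proceeds by showing in succession that $\nu\in\Z R$, that $h_\nu\le 2$ with equality only under stringent conditions, that $\nu$ is dominant, that any such $\nu$ must be a multiplicity-free sum of roots whose coroots are highest coroots (using a case-check over irreducible root systems for the identity $\sum_{\alpha\in\Delta,\ \langle\alpha_0,\alpha^\vee\rangle\neq 0}n_\alpha=2$), that $w\pi(\nu)=\nu$, and that $\sigma=\prod_i s_{\alpha_i}$; only after all this can one conclude $\mu=\lambda$ and $s=w$. None of this is captured by the coroot-pairing window $(-p,2p)$ that you invoke.

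You also misplace the role of the connected-center hypothesis. Irreducibility of a Deligne--Lusztig virtual character $\pm R_T^G(\theta)$ for $\theta$ in general position holds for any reductive $G$ and does not require connected center. In the paper, connected center is used precisely in the first claim: from $\xi=\lambda-\sigma(\mu)=(p-\pi)\nu\in\Z R$ one deduces $(p^f-1)\nu\in\Z R$ with $f$ the order of $\pi$, and then uses $X^*(T)/\Z R\cong X^*(Z)$ together with the fact that the torsion of $X^*(Z)$ is a $p$-group (as $Z$ is connected) to conclude $\nu\in\Z R$. Your observation that $\xi\in\Z R$ is automatic is correct but is not the statement one needs; the content is passing from $\xi\in\Z R$ to $\nu\in\Z R$, and that step is exactly where the hypothesis on the center enters.
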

\begin{proof}
By \cite[\S 4.1]{herzig-duke}, $w = \sigma s \pi(\sigma)^{-1}$ and $\lambda = \sigma(\mu) + p\nu - w \pi \nu$ for some $\sigma \in W$ and $\nu \in X^*(T)$ (using that $F$ in \emph{loc.~cit}~is $p \pi^{-1}$ in our notation). 
\begin{claim} \label{claim:rootlattice}
We have that $\nu\in \Z R$. 
\end{claim}
\begin{proof}
Since $\mu-\lambda\in \Z R$, we deduce that $(p-\pi)\nu\in \Z R$. 
This implies that $(p^f - 1)\nu\in \Z R$, where $f$ is the order of $\pi$. 
If $Z$ is the (connected) center of $G$, then the torsion subgroup of $X^*(Z)$ is a $p$-group. 
Using that $X^*(T)/\Z R \cong X^*(Z)$, the claim follows. 
\end{proof}

\begin{claim}\label{claim:ht2}
We have that $\langle \nu,\alpha^\vee\rangle \leq 2$ with equality if and only if 
\begin{itemize}
\item $\alpha^\vee$ is a highest coroot; 
\item $\langle \mu,\alpha^\vee\rangle = p-1 = \langle \lambda,\alpha^\vee\rangle$; 
\item $\sigma$ commutes with $s_\alpha$;  
\item $\langle w(\pi(\nu)),\alpha^\vee \rangle =2$. 
\end{itemize}
\end{claim}
\begin{proof}
Suppose that $\alpha\in R$ such that $\langle \nu,\alpha^\vee\rangle = h_\nu$. 
Then using that $\lambda = \sigma(\mu) + p\nu - w(\pi(\nu))\in C_0 + \eta$, we have 
\begin{align*}
p-1 &\geq \langle \lambda,\alpha^\vee \rangle \\
&= \langle \sigma (\mu),\alpha^\vee\rangle + p\langle \nu,\alpha^\vee\rangle - \langle w(\pi(\nu)),\alpha^\vee \rangle \\
&\geq 1-p+(p-1)h_\nu,
\end{align*}
from which we deduce that $h_\nu \leq 2$. 
Moreover, if $\langle \nu,\alpha^\vee\rangle = 2$, then all inequalities above must be equalities. 
In particular, $p-1 = \langle \lambda,\alpha^\vee \rangle$. 
Since $\langle \lambda,\beta^\vee\rangle > 0$ for any $\beta \in \Delta$, $\alpha^\vee$ must be a highest coroot. 
Next, $1-p = \langle \sigma (\mu),\alpha^\vee\rangle = \langle \mu,\sigma^{-1}(\alpha)^\vee \rangle$ so that $\sigma^{-1}(\alpha) = \alpha$ and $\langle \mu,\alpha^\vee \rangle = p-1$. 
That $\sigma^{-1}(\alpha) = \alpha$ implies that $\sigma$ and $s_\alpha$ commute (look at the actions on the set of roots). 
Finally, we have that $\langle w(\pi(\nu)),\alpha^\vee \rangle =2$. 
The converse is easy to check. 
\end{proof}

\begin{claim}\label{claim:not1}
Fix $\alpha \in R$ and $\kappa\in \Z R$. 
If $0<\max_{r\in W} \langle \kappa,r(\alpha)^\vee\rangle \leq 2$ then $\max_{r\in W} \langle \nu,r(\alpha)^\vee\rangle = 2$. 
\end{claim}
\begin{proof}
Indeed, if $\max_{r\in W} \langle \kappa,r(\alpha)^\vee\rangle \leq 1$, then the projection of $\kappa$ to the root system spanned by $\{r(\alpha)\mid r\in W$ is contained in $W \ovl{A}_0$. 
However, $W \ovl{A}_0 \cap \Z R = 0$ so that $\max_{r\in W} \langle \nu,r(\alpha)^\vee\rangle = 0$ which is a contradiction. 
\end{proof}

Combining Claims \ref{claim:ht2} and \ref{claim:not1}, we see that $\max_{r\in W} \langle \nu,r(\alpha)^\vee\rangle>0$ with $\nu$ as above implies that $\max_{r\in W} \langle \kappa,r(\alpha)^\vee\rangle = 2$. 

\begin{claim} \label{claim:dom}
The weight $\nu$ is dominant. 
\end{claim}
\begin{proof}
Let $\alpha \in \Delta$ such that $\langle \nu,\alpha^\vee\rangle \neq 0$. 
Then Claim \ref{claim:not1} implies that $\langle \nu,r(\alpha)^\vee\rangle =2$ for some $r\in W$ so that by Claim \ref{claim:ht2}, $\beta^\vee \defeq r(\alpha)^\vee$ is a highest coroot and $s_\beta$ commutes with $\sigma$. 
We have that 
\[
\langle \sigma (\mu),\alpha^\vee\rangle + p\langle \nu,\alpha^\vee\rangle - \langle w(\pi(\nu)),\alpha^\vee \rangle = \langle \lambda,\alpha^\vee\rangle > 0. 
\]
If $\langle \nu,\alpha^\vee\rangle < 0$, then using that $h_\nu \leq 2$ we have $\langle \sigma(\mu),\alpha^\vee \rangle > p-2$. 
This implies that 
\begin{itemize}
\item $\langle \sigma (\mu),\alpha^\vee\rangle=\langle \mu,\sigma^{-1}(\alpha)^\vee \rangle = p-1$; 
\item $\sigma^{-1}(\alpha)^\vee$ is a highest coroot and thus equal to $\beta^\vee$; and
\item $\langle \nu,\alpha^\vee \rangle = -1$. 
\end{itemize}
The fact that $\sigma$ and $s_\beta$ commute implies that $\alpha = \sigma(\beta) = -\beta$. 
Thus $\langle \nu,\alpha^\vee\rangle = -\langle \nu,\beta^\vee\rangle = -2$ which contradicts the last bullet point. 
\end{proof}

\begin{claim}\label{claim:highestcoroot}
If $\kappa \in \Z R$ is such that 
\begin{itemize}
\item $\langle \kappa,\alpha^\vee \rangle \leq 2$ for all $\alpha\in R$ and equality implies that $\alpha^\vee$ is a highest coroot; and
\item $\kappa$ is dominant; 
\end{itemize}
then $\kappa$ is a multiplicity free sum of roots whose corresponding coroot is highest. 
\end{claim}
\begin{proof}
We immediately reduce to the case where the root system is irreducible. 
The rank one case is easy to check directly, so we now assume that the root system has rank at least $2$. 
If $\max_{\alpha\in R} \langle \kappa,\alpha^\vee\rangle = 0$, then $\kappa = 0$, and we are done. 
Otherwise, $\max_{\alpha\in R} \langle \kappa,\alpha^\vee\rangle > 0$, and Claim \ref{claim:not1} implies that $\max_{\alpha\in R} \langle \kappa,\alpha^\vee\rangle =2$. 
As $\kappa$ is dominant (by the second bullet point), we have that $\langle \kappa,\alpha_0^\vee\rangle =2$ for the highest coroot $\alpha_0^\vee$. 
If $\alpha \in \Delta$ and $\langle \alpha_0,\alpha^\vee\rangle \neq 0$, then $\alpha_0^\vee-\alpha^\vee$ is a coroot but not a highest coroot. 
By the first bullet point, $\langle \kappa,\alpha_0^\vee-\alpha^\vee \rangle <2$ so that $\langle \kappa,\alpha^\vee\rangle > 0$. 
If $\alpha_0 = \sum_{\alpha\in \Delta} n_\alpha \alpha$, then one can check casewise using the classification of irreducible root systems (and that the rank is at least $2$) that 
\begin{equation}\label{eqn:rootdecomp}
\sum_{\substack{\alpha \in \Delta\\ \langle \alpha_0,\alpha^\vee\rangle \neq 0}} n_\alpha = 2. 
\end{equation} 
Using \eqref{eqn:rootdecomp} and that $\langle \kappa,\alpha_0^\vee\rangle =2$, $\langle \kappa,\alpha^\vee\rangle \geq 1$ if $\alpha \in \Delta$ and $\langle \alpha_0,\alpha^\vee\rangle \neq 0$, and $\langle \kappa,\alpha^\vee\rangle \geq 0$ for all $\alpha \in \Delta$, we conclude that for $\alpha\in \Delta$, $\langle \kappa,\alpha^\vee \rangle = 1$ (resp.~$\langle \kappa,\alpha^\vee\rangle = 0$) if $\langle \alpha_0,\alpha^\vee\rangle \neq 0$ (resp.~$\langle\alpha_0,\alpha^\vee\rangle = 0$). 
Since the same formulas hold with $\alpha_0$ in place of $\kappa$, this implies that $\kappa = \alpha_0$. 
\end{proof}

Combining Claims \ref{claim:rootlattice}, \ref{claim:ht2}, \ref{claim:dom}, and \ref{claim:highestcoroot}, we see that $\nu$ is a multiplicity-free sum of roots $\alpha_i$ whose corresponding coroots $\alpha_i^\vee$ are highest coroots. 
Say $\nu = \sum_{i\in S} \alpha_i$. 

\begin{claim} \label{claim:nustable}
We have $w(\pi(\nu)) = \nu$. 
\end{claim}
\begin{proof}
Using that for any $i\in S$ 
\[
\langle \nu,\pi^{-1}(w^{-1}(\alpha_i))^\vee\rangle = \langle w(\pi(\nu)),\alpha_i^\vee\rangle = 2 
\]
from Claim \ref{claim:ht2}, another application of Claim \ref{claim:ht2} shows that $\pi^{-1}(w^{-1}(\alpha_i))^\vee$ must be a highest coroot as well. 
Then $\pi^{-1}(w^{-1}(\alpha_i))^\vee = \alpha_j^\vee$ for some $j\in S$ since  it is not orthogonal to $\nu$. 
In other words, for any $j\in S$, $w(\pi(\alpha_j)) = \alpha_i$ for some $i\in S$. 
The claim follows. 
\end{proof}

\begin{claim}\label{claim:sigma}
We have $\sigma = \prod_{i\in S} s_{\alpha_i}$. 
\end{claim}
\begin{proof}
Since $\sigma$ and $s_{\alpha_i}$ commute for each $i \in S$, $\sigma$ can be written as a product of the reflections $s_{\alpha_i}$ and of the reflections $s_\alpha$ with $\alpha \in R^+$ and $\langle\alpha_i,\alpha^\vee\rangle = 0$. 

If $\alpha \in R^+$ and $\langle\alpha_i,\alpha^\vee\rangle = 0$ for all $i\in S$, then 
\[
0<\langle \lambda,\alpha^\vee\rangle = \langle \mu,\sigma^{-1}(\alpha)^\vee\rangle + p\langle \nu,\alpha^\vee\rangle - \langle w(\pi(\nu)),\alpha^\vee\rangle = \langle \mu,\sigma^{-1}(\alpha)^\vee\rangle
\]
using Claim \ref{claim:nustable}. 
We deduce that $\sigma^{-1}(\alpha) \in R^+$. 
Thus, $\sigma$ is in the group generated by the reflections $\{s_{\alpha_i}\mid i\in S\}$. 

For $i\in S$, we have 
\[
p>\langle \lambda,\alpha_i^\vee\rangle = \langle \mu,\sigma^{-1}(\alpha_i)^\vee\rangle + p\langle \nu,\alpha_i)^\vee\rangle - \langle w(\pi(\nu)),\alpha_i^\vee\rangle =\langle \mu,\sigma^{-1}(\alpha_i)^\vee\rangle + 2(p-1),
\]
so that $\langle \mu,\sigma^{-1}(\alpha_i)^\vee\rangle < 0$. 
Then $-\sigma^{-1}(\alpha_i) \in R^+$ for each $i\in S$. 
We conclude that $\sigma = \prod_{i\in S} s_{\alpha_i}$. 
\end{proof}

Combining Claims \ref{claim:nustable} and \ref{claim:sigma}, we see that $\lambda = \mu$. 
We also have that 
\[
w \pi(\sigma) w^{-1} = \prod_{i\in S} w \pi(s_{\alpha_i}) w^{-1} = \prod_{i\in S}  s_{w(\pi(\alpha_i))} = \sigma
\]
where the first equality uses Claim \ref{claim:sigma} and the last equality follows from Claim \ref{claim:nustable} (and that $\nu = \sum_{i\in S} \alpha_i$). 
Then $s = \sigma ^{-1} w \pi(\sigma) =w$. 
\end{proof}

\begin{lemma}\label{lem:DL containing decent weight} 
Let $(s,\mu)$ be maximally split and $\sigma$ be an $(m+d_\sigma)$-deep Serre weight for some $m\geq 0$ such that $\sigma\in \JH(\ovl{R}_s(\mu))$. 
Assume that either $m > 2$ or $R_s(\mu)$ is $0$-generic. Then $R_s(\mu)$ is $m$-generic. 
\end{lemma}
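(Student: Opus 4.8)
The plan is to recover the Deligne--Lusztig parameter $\mu$ from the deep Serre weight $\sigma$, up to an ambiguity that does not affect genericity, by combining the combinatorial description of $\JH(\ovl{R}_s(\mu))$ with the elementary fact that both the $W$-action and translation by $pX^*(T)$ preserve distances to the walls $\{\langle\cdot,\alpha^\vee\rangle\in p\Z\}$ of the $p$-alcove arrangement.

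First I would treat the case where $R_s(\mu)$ is $0$-generic. Replacing the given presentation by one whose parameter is $0$-deep in $C_0$ (which is again maximally split), we may assume $\mu-\eta\in C_0$. I would then invoke the description of the Jordan--H\"older factors of $\ovl{R}_s(\mu)$ (as in \cite{herzig-duke}, \cite{LLL}, cf.~\cite{jantzen}): every $\sigma'\in\JH(\ovl{R}_s(\mu))$ admits a presentation $F_{(\tld{w}_2,\mu)}$ with $\tld{w}_2\in\tld{W}_1$, and in particular, writing $\sigma'=F(\lambda)$, one has $\lambda+\eta=p\nu+w(\mu)$ for some $w\in W$ and $\nu\in X^*(T)$. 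Since $w$ permutes the coroots and $p\nu$ contributes only multiples of $p$, for each root $\alpha$
\[
\min_{k\in\Z}|\langle\lambda+\eta,\alpha^\vee\rangle-kp| = \min_{k\in\Z}|\langle\mu,w^{-1}(\alpha)^\vee\rangle-kp|,
\]
so the $p$-alcove depth of $\lambda$ equals $\min_{\beta\in R,\,j\in\Z}|\langle\mu,\beta^\vee\rangle-jp|$, which is exactly the depth of $\mu-\eta$ in $C_0$. Applying this to $\sigma$, which is $(m+d_\sigma)$-deep and hence $m$-deep (note $d_\sigma\geq 0$), shows that $\mu-\eta$ is $m$-deep in $C_0$, i.e.\ $R_s(\mu)$ is $m$-generic.

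It therefore suffices, in the remaining case $m>2$, to prove that $R_s(\mu)$ is $0$-generic and then quote the previous paragraph. Here I would first use the relations among Deligne--Lusztig representations to replace $(s,\mu)$ by a maximally split presentation $(s^*,\mu^*)$ of the same representation with $\mu^*-\eta$ in the closure $\overline{C_0}$ of the lowest alcove. The class of $\sigma$ in the Grothendieck group is unchanged, so $\sigma\in\JH(\ovl{R}_{s^*}(\mu^*))$; by the linkage principle the highest weight $\lambda$ of $\sigma$ still satisfies $\lambda+\eta=p\nu+w(\mu^*)$ for some $w\in W$ and $\nu\in X^*(T)$ (this is the form of the description valid with no genericity hypothesis). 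As above, the $p$-alcove depth of $\lambda$ equals $\min_{\beta,j}|\langle\mu^*,\beta^\vee\rangle-jp|$; since $\sigma$ is $(m+d_\sigma)$-deep with $m>2$, this minimum exceeds $2$, so $\langle\mu^*,\beta^\vee\rangle\notin p\Z$ for every root $\beta$. Hence $\mu^*-\eta$ lies on no wall of the $p$-alcove arrangement, and being in $\overline{C_0}$ it must lie in $C_0$. Therefore $R_{s^*}(\mu^*)$, and so $R_s(\mu)$, is $0$-generic, completing the reduction.

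I expect the main obstacle to be the ``folding'' step in the last paragraph: one must arrange an arbitrary maximally split pair $(s,\mu)$ into one with parameter in $\overline{C_0}$ while keeping track of maximal splitness and of which Jordan--H\"older factors persist, and one must pin down the precise (small) amount of genericity that the cited description of $\JH(\ovl{R}_s(\mu))$ requires in order to be used both for the final conclusion and inside the bootstrap --- this is where the thresholds $2$ and $d_\sigma$ in the hypotheses enter. Once those structural inputs are isolated, the depth bookkeeping is routine, precisely because applying an element of $W$ or translating by $pX^*(T)$ leaves all distances to the walls $\{\langle\cdot,\alpha^\vee\rangle\in p\Z\}$ unchanged.
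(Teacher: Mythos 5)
Your proof has a genuine gap in the very first step, which then propagates through the whole argument. You assert that every $\sigma' \in \JH(\ovl{R}_s(\mu))$ admits a presentation $F_{(\tld{w}_2,\mu)}$ with $\tld{w}_2\in\tld{W}_1$, and hence that the highest weight $\lambda$ of $\sigma$ satisfies $\lambda+\eta = p\nu + w(\mu)$ for some $w\in W$, $\nu\in X^*(T)$. This is not what the decomposition theorem says. By \cite[Theorem~3.2]{DLR} (which is what the paper actually invokes), the relation is that for some $\tld{w}\in\tld{W}$,
\[
\tld{w}\cdot\bigl(\mu-\eta + s\pi\tld{w}^{-1}(0)\bigr) \uparrow \tld{w}_h\cdot\lambda,
\]
so $\lambda$ lies in the $p$-dot linkage class of $\mu-\eta+s\pi\tld{w}^{-1}(0)$, \emph{not} of $\mu-\eta$. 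The correction term $s\pi\tld{w}^{-1}(0)$ is nonzero in general (it is bounded by $h_{\tld{w}(0)}$, which can be as large as $h_\eta$), so your identity ``depth of $\lambda$ $=$ depth of $\mu-\eta$'' is false. Consequently, in the $0$-generic case you only get that $\mu-\eta+s\pi\tld{w}^{-1}(0)$ is $(m+d_\sigma)$-deep, which does not immediately give that $\mu-\eta$ is $m$-deep, and in the folding step for $m>2$ the same wrong identity is what you use to conclude $\langle\mu^*,\beta^\vee\rangle\notin p\Z$.

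A useful sanity check that should have flagged this: your argument would prove the stronger statement where ``$(m+d_\sigma)$-deep'' is replaced by ``$m$-deep'', since you never use the $d_\sigma$ in the hypothesis. But that slack is exactly what is needed to absorb the shift. The substantive content of the paper's proof is the claim $h_{\tld{w}(0)}\leq d_\sigma$, established by a careful chain of inequalities pairing against a highest coroot, using $\langle\mu,\alpha^\vee\rangle\leq p+2$ (or $\mu-\eta\in C_0$) on one side and the $\uparrow$-inequality plus the definition of $d_\sigma$ on the other; this bound is precisely what allows one to subtract the shift and conclude that $\mu-\eta$ itself is $m$-deep. Your write-up skips this entirely, and the ``main obstacle'' you flag (the folding bookkeeping) is not where the difficulty actually lies.
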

\begin{proof} 
If $R$ is $0$-generic, then $R \cong R_{s'}(\mu')$ with $\mu'\in C_0$. 
Replacing $(s,\mu)$ by $(s',\mu')$ (which is also maximally split), we can and do assume in this case that $\mu \in C_0$. 
If $R$ is not $0$-generic, we instead assume without loss of generality that $\mu$ is dominant and $\langle \mu,\alpha^\vee \rangle \leq p+2$ for all roots $\alpha$ using \cite[Lemma 2.3.3]{MLM}. 
Suppose that $\sigma = F(\lambda)$. 
The fact that $\sigma \in \JH(\ovl{R}_s(\mu))$ implies that $\Hom(Q_\lambda,\ovl{R}_s(\mu)) \neq 0$ where $Q_\lambda$ is defined as in \cite[\S 3]{DLR}. 
By \cite[Theorem 3.2]{DLR}, there exists $\tld{w} \in \tld{W}$ such that $\tld{w}\cdot (\mu-\eta+s\pi \tld{w}^{-1}(0))+\eta$ is dominant and 
\begin{equation}\label{item:uparrow}
\tld{w}\cdot (\mu-\eta+s\pi \tld{w}^{-1}(0)) \uparrow \tld{w}_h\cdot \lambda. 
\end{equation}
In particular, we have $\tld{w}\cdot (\mu-\eta+s\pi \tld{w}^{-1}(0)) +\eta\leq  \tld{w}_h\cdot \lambda + \eta$. 
We claim that $h_{\tld{w}(0)} \leq d_\sigma$. 
Let $w \in W$ such that $w\tld{w} \in \tld{W}^+$ and let $\alpha_0^\vee$ be a highest coroot such that $h_{\tld{w}(0)} = \langle w\tld{w}(0),\alpha_0^\vee\rangle$. 
(We can take $\alpha_0^\vee$ to be a highest coroot since $w\tld{w}(0)$ is dominant.) 
In particular, $\alpha_0^\vee$ is dominant. 
Then we have inequalities 
\begin{align*}
(p-1)h_{\tld{w}(0)} - p - 2 &\leq \langle p w\tld{w}(0) +\sigma s \pi \tld{w}^{-1}(0) + \sigma(\mu),\alpha_0^\vee \rangle \\
&= \langle w\tld{w}\cdot (\mu-\eta +s\pi \tld{w}^{-1}(0))+\eta,\alpha_0^\vee \rangle \\
&\leq \langle \tld{w}\cdot (\mu-\eta +s\pi \tld{w}^{-1}(0))+\eta,\alpha_0^\vee \rangle \\
&\leq \langle \tld{w}_h\cdot \lambda+\eta,\alpha_0^\vee\rangle \\
&\leq (p-1)d_\sigma-1-m 
\end{align*}
where 
\begin{itemize} 
\item $\sigma$ denotes the projection of $w\tld{w}$ in $W$; 
\item the first inequality uses that $h_\mu \leq p+2$; 
\item the second equality is actually an equality obtained from an equality of the corresponding terms in the pairings; 
\item the third inequality uses that $\tld{w}\cdot (\mu-\eta+s\pi \tld{w}^{-1}(0))+\eta\geq w\tld{w}\cdot (\mu-\eta+s\pi \tld{w}^{-1}(0))+\eta$ since $\tld{w}\cdot (\mu-\eta+s\pi \tld{w}^{-1}(0))+\eta$ is dominant and also that $\alpha_0^\vee$ is dominant; and
\item the fourth inequality uses \eqref{item:uparrow} and that $\alpha_0^\vee$ is dominant. 
\end{itemize}
For the last inequality, suppose that $\lambda \in \tld{w} \cdot \nu$ for $\nu \in C_0$. 
Note that $\tld{w}_h\cdot \lambda + \eta = \tld{w}_h\tld{w} \cdot \nu +\eta = p\tld{w}_h\tld{w}(\frac{\nu+\eta}{p})$ and $\frac{\nu+\eta}{p} \in A_0$. 
By the definition of $d_\sigma$, we have $\langle \tld{w}_h\tld{w}(\frac{\nu+\eta}{p}),\alpha_0^\vee\rangle  \leq d_\sigma$ so that $\langle \tld{w}_h\cdot \lambda+\eta,\alpha_0^\vee\rangle \leq pd_\sigma$. 
Since $\lambda$ is $(m+d_\sigma)$-deep in its $p$-alcove (and thus so is $\tld{w}_h\cdot \lambda$), $\langle \tld{w}_h\cdot \lambda+\eta,\alpha_0^\vee\rangle < pd_\sigma - m - d_\sigma$ which gives the last inequality. 

Taking the sequence of inequalities and using that $m>2$, we get that $(p-1)h_{\tld{w}(0)} < (p-1)(d_\sigma +1)$ which implies the desired inequality. 
If instead, we have that $\mu \in C_0$, then the first expression in the sequence of inequalities can be replaced by $(p-1)h_{\tld{w}(0)} - p + 2$. 
Taking the modified sequence of inequalities and using that $m\geq 0$ similarly yields the desired inequality. 

Since $\mu-\eta+s\pi \tld{w}^{-1}(0)$ and $\lambda$ are in the same $\tld{W}$-orbit under the $p$-dot action, $\mu-\eta+s\pi \tld{w}^{-1}(0)$ is $(m+d_\sigma)$-deep in its $p$-alcove. 
The inequality in the previous paragraph then implies that $\mu-\eta$ is $m$-deep in its $p$-alcove. 
In the case that $R$ is $0$-generic, $\mu-\eta$ is in $C_0$ by assumption, and we are done. 
Otherwise, we are in the case that $m>2$. As $\mu$ is dominant and $h_\mu \leq p+2$, we must have that $\mu-\eta$ is $m$-deep in $C_0$. 
\end{proof}

\begin{rmk}\label{rmk:typeA}
In type $A$, we can replace $m>2$ in Lemma \ref{lem:DL containing decent weight} with $m>0$. 
Indeed, in the proof, we can assume without loss of generality that $h_\mu \leq p$ by \cite[Remark 2.3.5]{MLM}. 
\end{rmk}

We have the following useful description of the Jordan-H\"older factors of generic Deligne--Lusztig representations which strengthens \cite[Proposition 2.3.8]{MLM}:
\begin{prop}\label{prop:combinatorial membership of DL} Suppose we are given a Serre weight $\sigma$ and a Deligne--Lusztig representation $R=R(t_\mu s)$ such that $\mu-\eta$ is $h_\eta$-deep in $C_0$. Then the following are equivalent: 
\begin{enumerate}
\item \label{item:JH}
$\sigma\in \JH(\ovl{R})$; 
\item \label{item:uparrowreflect}
there exists $\tld{w}\in \tld{W}_1$, $\omega\in \eta+ C_0$, and $\tld{u}\in\tld{W}^+$ such that $\sigma=F_{(\tld{w},\omega)}$, $\tld{u}\uparrow \tld{w}_h\tld{w}$, and 
\[t_\omega\in t_\mu s \tld{u}^{-1}W;\]
and
\item \label{item:adm}
there exists $\tld{w}\in \tld{W}_1$, $\omega\in \eta+ C_0$ such that $\sigma=F_{(\tld{w},\omega)}$ and 
\[t_{\omega}\tld{W}_{\leq w_0\tld{w}}\subset t_\mu s \Adm(\eta).\]
\end{enumerate}
\end{prop}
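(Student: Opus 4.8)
The plan is to prove this as a cycle of implications, roughly following the strategy of \cite[Proposition 2.3.8]{MLM} but keeping careful track of the $\eta$-shift convention and the deepness hypotheses, and using the combinatorial lemmas of the previous subsection (especially Lemmas \ref{lemma:reduced1} and \ref{lem:subregular shape bound}) to convert ``$\uparrow$'' statements into Bruhat-order statements about admissible sets. The genericity hypothesis ($\mu-\eta$ is $h_\eta$-deep in $C_0$) is what makes all the relevant Deligne--Lusztig representations actual (via Lemma \ref{lem:DL containing decent weight} with, say, $m>2$) and makes Jordan--H\"older multiplicities in $\ovl R$ controllable, so I would invoke it freely. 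First I would establish \eqref{item:JH}$\Leftrightarrow$\eqref{item:uparrowreflect}: the characterization of $\JH(\ovl R_s(\mu))$ via \cite[Theorem 3.2]{DLR} says precisely that $\sigma = F(\lambda) \in \JH(\ovl R_s(\mu))$ iff there is $\tld w \in \tld W$ with $\tld w\cdot(\mu-\eta+s\pi\tld w^{-1}(0))$ dominant and $\tld w\cdot(\mu-\eta+s\pi\tld w^{-1}(0)) \uparrow \tld w_h\cdot\lambda$. Unwinding the presentation $\sigma = F_{(\tld w',\omega)}$ (with $\lambda = \pi^{-1}(\tld w')\cdot(\omega-\eta)$) and translating the $p$-dot action condition $\tld w\cdot(\mu-\eta+s\pi\tld w^{-1}(0)) \in \tld w'_{?}\cdot(\omega-\eta)$-orbit into the statement $t_\omega \in t_\mu s\,\tld u^{-1}W$ is a bookkeeping exercise with the affine Weyl group, where $\tld u$ records the relation between $\tld w$ and $\tld w'$; the inequality $\tld u \uparrow \tld w_h\tld w'$ is exactly the $\uparrow$-inequality above after applying $\tld w_h^{-1}$ and $\pi$ and using $\pi$-invariance of $\eta$ and $C_0$. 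This step is essentially a reformulation; the deepness of $\mu-\eta$ guarantees the Jordan--H\"older factor actually occurs (not just formally in the Grothendieck group) and that the presentation $(\tld w',\omega)$ with $\omega-\eta\in C_0$ exists and is genuinely $0$-deep.

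Next I would prove \eqref{item:uparrowreflect}$\Rightarrow$\eqref{item:adm}. Given $\tld u \in \tld W^+$ with $\tld u \uparrow \tld w_h\tld w$ and $t_\omega \in t_\mu s\,\tld u^{-1}W$, I want to show $t_\omega\tld W_{\leq w_0\tld w} \subset t_\mu s\,\Adm(\eta)$. Since $\Adm(\eta) = \cup_{w\in W}\tld W_{\leq t_{w(\eta)}}$, it suffices to show that for every $\tld v \leq w_0\tld w$ we have $\tld u\, t_\omega\tld v \in \Adm(\eta)$ up to the $W$-ambiguity — more precisely $t_\mu s\, \tld u^{-1}$ absorbs into $t_\omega$, so I need $\tld u^{-1}$-translates of the lower Bruhat interval below $w_0\tld w$ to land in $\Adm(\eta)$. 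The key input is the length/Bruhat computation: by Lemma \ref{lemma:reduced1} (applied with the simple-root data trivialized, or its $s_\alpha$-free analogue which is the standard fact that $\tld w_2^{-1}w_0\tld w_1$ is a reduced factorization when $\tld w_1 \uparrow \tld w_h^{-1}\tld w_2$) together with \cite[Theorem 4.3]{Wang} giving $\tld u \leq \tld w_h\tld w$, one shows $\tld u^{-1} w_0 \tld w \leq$ something of the form $w_0\tld w_h^{-1}(\cdot)^\diamond$, hence lies in $\Adm(\eta)$ after the appropriate translation normalization. Running this through for the whole interval $\tld W_{\leq w_0\tld w}$ via compatibility of Bruhat order with reduced factorizations (\cite[Lemma 4.1.9]{LLL}) gives the containment. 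I would model this closely on the proof of Lemma \ref{lem:subregular shape bound}, which is exactly this type of argument in the subregular case; the general case needs the analogue without the $s_\alpha$-twist, which should be simpler.

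Finally, \eqref{item:adm}$\Rightarrow$\eqref{item:JH}. Here I would argue contrapositively or by a counting/support argument: the condition $t_\omega\tld W_{\leq w_0\tld w} \subset t_\mu s\,\Adm(\eta)$ forces, via the combinatorics of $\Adm(\eta)$ and the admissible-set description, the existence of some $\tld u \in \tld W^+$ with $\tld u\uparrow \tld w_h\tld w$ and $t_\omega \in t_\mu s\,\tld u^{-1}W$, i.e.\ back to \eqref{item:uparrowreflect} — so really this is the converse of the previous paragraph, and the hard content is showing that the inclusion of the \emph{whole} lower interval $\tld W_{\leq w_0\tld w}$ is strong enough to recover the $\uparrow$-relation, not merely some weaker Bruhat inequality. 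This is where I expect the main obstacle: one direction of the equivalence between ``$\uparrow$ below $\tld w_h\tld w$'' and ``Bruhat-below $w_0\tld w$ after translation lands in $\Adm(\eta)$'' is classical (the $\uparrow$-order refines a translated Bruhat order), but the reverse implication uses that $\mu-\eta$ is deep enough that the admissible set $t_\mu s\,\Adm(\eta)$ meets the relevant alcoves in a ``thin'' way with no accidental coincidences — precisely the kind of genericity that $h_\eta$-deepness in $C_0$ is designed to provide, and I would need the results of \cite{LLL,MLM} on the geometry of $\Adm(\eta)$ and lowest-alcove presentations to pin it down. If that reverse implication is too delicate to extract directly, the fallback is to prove \eqref{item:adm}$\Rightarrow$\eqref{item:uparrowreflect} by the intermediate geometric characterization of $\JH(\ovl R)$ in terms of the special fiber of the tame potentially crystalline deformation ring / local model, where the admissible-set condition is literally the defining condition, closing the cycle.
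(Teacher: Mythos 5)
Your handling of \eqref{item:JH}$\Leftrightarrow$\eqref{item:uparrowreflect} by reducing to the Jantzen-type decomposition pattern of \cite{DLR} is the same as the paper's (the paper cites \cite[Theorem~4.2]{DLR} with a specific substitution of variables, which is exactly the ``bookkeeping exercise'' you describe), and your ingredients for \eqref{item:uparrowreflect}$\Rightarrow$\eqref{item:adm} ($\tld{u}\leq\tld{w}_h\tld{w}$ from Wang / \cite[Theorem~4.1.1]{LLL} and the reduced factorization of $(\tld{w}_h\tld{w})^{-1}w_0\tld{w}$ from \cite[Lemma~4.1.9]{LLL}) are correct, though you reach for Lemma~\ref{lemma:reduced1} and Lemma~\ref{lem:subregular shape bound} where a plain containment chain suffices: $t_\omega\tld{W}_{\leq w_0\tld{w}}\subset t_\mu s\,\tld{u}^{-1}\tld{W}_{\leq w_0\tld{w}}$ (using $W\tld{W}_{\leq w_0\tld{w}}=\tld{W}_{\leq w_0\tld{w}}$) $\subset t_\mu s\,\tld{W}_{\leq(\tld{w}_h\tld{w})^{-1}w_0\tld{w}}\subset t_\mu s\,\Adm(\eta)$, the middle inclusion being the usual compatibility of Bruhat order with a reduced factorization.

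The genuine gap is in \eqref{item:adm}$\Rightarrow$\eqref{item:uparrowreflect}. You flag this as potentially delicate and offer a ``counting/support'' heuristic or a geometric fallback via the special fiber of the local model. Neither is needed, and the geometric route would be badly out of proportion (and arguably circular, since the geometric description of $\JH(\ovl{R})$ ultimately rests on this kind of combinatorics). The direct argument is short: uniquely write $(t_\mu s)^{-1}t_\omega=\tld{u}^{-1}w$ with $w\in W$ and $\tld{u}\in\tld{W}^+$ (always possible; this is just the decomposition of $\tld{W}$ into $W\backslash$-cosets with distinguished dominant representatives). Then pick the \emph{single} element $w^{-1}w_0\tld{w}\in\tld{W}_{\leq w_0\tld{w}}$ (which lies there because $w_0\tld{w}$ is a reduced factorization, so $w^{-1}w_0\leq w_0$ forces $w^{-1}w_0\tld{w}\leq w_0\tld{w}$), and observe that
\[
\tld{u}^{-1}w_0\tld{w}=\tld{u}^{-1}w\cdot(w^{-1}w_0\tld{w})=(t_\mu s)^{-1}t_\omega\cdot(w^{-1}w_0\tld{w})\in(t_\mu s)^{-1}t_\omega\tld{W}_{\leq w_0\tld{w}}\subset\Adm(\eta),
\]
where the last inclusion is exactly hypothesis \eqref{item:adm}. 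Now \cite[Proposition~2.1.6]{MLM} converts $\tld{u}^{-1}w_0\tld{w}\in\Adm(\eta)$ into $\tld{u}\leq\tld{w}_h\tld{w}$, hence $\tld{u}\uparrow\tld{w}_h\tld{w}$, which is \eqref{item:uparrowreflect}. The point you were missing is that you don't need to invert the inclusion globally: testing it against one well-chosen element of the Bruhat interval already recovers the $\uparrow$-relation, and \cite[Proposition~2.1.6]{MLM} (which you don't invoke) is the lemma doing the conversion from admissibility of $\tld{u}^{-1}w_0\tld{w}$ to a Bruhat inequality for $\tld{u}$. No deepness of $\mu-\eta$ is used in the $(2)\Leftrightarrow(3)$ equivalence at all; it is a purely combinatorial statement, and the $h_\eta$-deep hypothesis is needed only for $(1)\Leftrightarrow(2)$ via \cite{DLR}.
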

\begin{proof} The equivalence between \eqref{item:JH} and \eqref{item:uparrowreflect} is a restatement of \cite[Theorem 4.2]{DLR} where $\tld{w}$, $\tld{w}_\lambda$, and $\mu$ in loc.~cit.~are taken to be $\pi^{-1}\tld{u}$, $\pi^{-1}\tld{w}$, and $\omega-s\tld{w}^{-1}(0)$, respectively. 
The equivalence between \eqref{item:uparrowreflect} and \eqref{item:adm} follows from the proof of \cite[Proposition 2.3.8]{MLM} which we recall for the convenience of the reader. 
\eqref{item:uparrowreflect} implies that 
\[
t_\omega \tld{W}_{\leq w_0\tld{w}} \subset t_\mu s \tld{u}^{-1} \tld{W}_{\leq w_0 \tld{w}} \subset t_\mu s \tld{W}_{\leq (\tld{w}_h\tld{w})^{-1}w_0 \tld{w}} \subset t_\mu s \Adm(\eta)
\] 
since $\tld{u}\leq \tld{w}_h\tld{w}$ by \cite[Theorem 4.1.1]{LLL} and $(\tld{w}_h\tld{w})^{-1}w_0 \tld{w}$ is a reduced factorization by \cite[Lemma 4.1.9]{LLL}. 
For the other direction, uniquely write $(t_\mu s)^{-1}t_\omega = \tld{u}^{-1}w$ for some $w\in W$ and $\tld{u} \in \tld{W}^+$. 
Then 
\[
\tld{u}^{-1}w_0 \tld{w} = \tld{u}^{-1} w (w^{-1}w_0 \tld{w}) \in  (t_\mu s)^{-1}t_\omega \tld{W}_{\leq w_0\tld{w}} \subset \Adm(\eta) 
\]
where the membership uses that $w_0 \tld{w}$ is a reduced factorization by \cite[Lemma 4.1.9]{LLL} and the inclusion is equivalent to \eqref{item:adm}. 
We are done by \cite[Proposition 2.1.6]{MLM}. 
\end{proof}

\begin{defn}\label{def:outer weights} (Outer Jordan--H\"older factor of Deligne--Lusztig representations) We say that a Serre weight $\sigma$ is an \emph{outer} Jordan--H\"older factor of the reduction $\ovl{R}$ of a Deligne--Lusztig representation if there exists $w,s \in W$, $\omega-\eta\in X^*(T)$, and after fixing a choice of $w^\diamond\in \tld{W}$ we have
\begin{itemize}
\item $\sigma \defeq F_{(w^\diamond,\omega)}$ and $\omega-\eta$ is $d_\sigma$-deep in $C_0$.
\item $R=R_s(\omega-s(\tld{w}_h w^\diamond)^{-1}(0))$.
\end{itemize}
Note that by \cite[Theorem 5.4(1)]{DLR}, in the current situation $\sigma$ is a Jordan--H\"older factor of $\ovl{R}$ with multiplicity one for any choice of $s\in W$. We say that $\sigma$ is  the \emph{outer} Jordan--H\"older factor of $\ovl{R}$ corresponding to $w\in W$, and denote by $\JH_\out(\ovl{R})$ to be the set of outer Jordan--H\"older factors of $\ovl{R}$.
\end{defn}

\begin{rmk}\label{rmk:outer}
\begin{enumerate}
\item \label{item:outerLAP}
The existence of $w,s\in W$ such that $R = R_s(\omega - s(\tld{w}_h w^\diamond)^{-1}(0))$ is equivalent for any choice of lowest alcove presentation $(w^\diamond,\omega)$ of a $d_\sigma$-deep Serre weight $\sigma$. 
Indeed, if $(w^\diamond \tld{\delta}^{-1},\pi^{-1}(\tld{\delta})(\omega-\eta)+\eta)$ is another lowest alcove presentation where $\tld{\delta} \in \Omega$, then $R = R_s(\omega-s(\tld{w}_hw^\diamond)^{-1}(0))$ implies that 
\begin{align*}
R &= R_s(\omega-s(\tld{w}_hw^\diamond)^{-1}(0)) \\
&= R_{\pi^{-1}(\delta)s\delta^{-1}}(\pi^{-1}(\tld{\delta})\cdot (\omega-\eta)+\eta - \pi^{-1}(\delta) s (\tld{w}_hw^\diamond)^{-1}(0) - \pi^{-1}(\delta)s\delta^{-1}\tld{\delta}(0)) \\
&= R_{\pi^{-1}(\delta)s\delta^{-1}}(\pi^{-1}(\tld{\delta})\cdot (\omega-\eta)+\eta - \pi^{-1}(\delta) s \delta^{-1} ((\delta\tld{w}_hw^\diamond)^{-1}(0) - \tld{\delta}(0))) \\
&= R_{\pi^{-1}(\delta)s\delta^{-1}}(\pi^{-1}(\tld{\delta})\cdot (\omega-\eta)+\eta - \pi^{-1}(\delta) s \delta^{-1} \tld{\delta}(\tld{w}_hw^\diamond)^{-1}(0)) \\
&= R_{\pi^{-1}(\delta)s\delta^{-1}}(\pi^{-1}(\tld{\delta})\cdot (\omega-\eta)+\eta - \pi^{-1}(\delta) s \delta^{-1} (\tld{w}_hw^\diamond\tld{\delta}^{-1})^{-1}(0)) 
\end{align*}
where $\delta$ denotes the image of $\tld{\delta}$ in $W$ and the second equality follows from \cite[Lemma 4.2]{herzig-duke}. 
\item Let $\sigma = F_{(w^\diamond,\omega)}$ with $\omega -\eta$ is $d_\sigma$-deep in $C_0$. 
Then the condition $F_{(w^\diamond,\omega)}\in \JH_\out(\ovl{R}(t_\mu s))$ where $\mu-\omega+(\tld{w}_hw^\diamond)^{-1}(0)\in \Z R \cap C_0$ is equivalent to the simple combinatorial condition
\[t_\omega\in t_\mu s (\tld{w}_hw^\diamond)^{-1}W. \]
Indeed, suppose that $F_{(w^\diamond,\omega)}\in \JH_\out(\ovl{R}(t_\mu s))$. 
After possibly changing the pair $(w^\diamond,\omega)$ using the previous remark, we have $R_s(\mu) \cong R_{s'}(\omega-s'(\tld{w}_hw^\diamond)^{-1}(0))$. 
Using that $\mu-\omega+(\tld{w}_hw^\diamond)^{-1}(0)\in \Z R \cap C_0$, Lemma \ref{lemma:0gen} implies that $s = s'$ and $\mu = \omega-s(\tld{w}_hw^\diamond)^{-1}(0)$. 
\item By Proposition \ref{prop:combinatorial membership of DL}, if $\tld{s}\in \tld{W}$ such that $\tld{s}(0)-\eta$ is $h_\eta$-deep in $C_0$, then for each $w\in W$, $R(\tld{s})$ has a corresponding outer Jordan--H\"older factor $F_{(w^\diamond,\tld{s}(\tld{w}_h w^\diamond)^{-1}(0))}$ (which is independent of the choice of $w^\diamond$). 
The other direction is clear. 
\end{enumerate}
\end{rmk}
\begin{defn}(The covering order on Serre weights)
Let $\kappa, \sigma$ be Serre weights. Assume $\kappa$ is $(h_\eta+d_\kappa)$-deep.
We say that $\kappa$ \emph{covers} $\sigma$ if $\sigma \in \JH(\ovl{R})$ for all Deligne--Lusztig $\ovl{R}$ of which $\kappa$ is an outer Jordan--H\"older factor. 
\end{defn}
\begin{warning} This definition differs from that of \cite[Definition 2.3.10]{MLM} in two aspects:
\begin{itemize}
\item We only let $R$ run over the subset of Deligne--Lusztig representations that contain $\kappa$ as an \emph{outer} Jordan--H\"older factor. 
\item We relax the genericity requirements on $\sigma, \kappa$. 
\end{itemize}
\end{warning}
As in \cite[Proposition 2.3.12]{MLM}, there are several useful alternative characterizations of the covering relation:
\begin{prop} \label{prop:covering characterization}
Let $\kappa$ and $\sigma$ be Serre weights. 
Assume that $\kappa$ is $(h_\eta+d_\kappa)$-deep and that $\kappa$ covers $\sigma$. 
Choose a presentation $\kappa=F_{(\tld{w},\omega)}$. 
Then we have the following. 
\begin{enumerate}
\item \label{item:vertexcover}
There is a presentation $\sigma=F_{(\tld{w}',\omega')}$ such that $t_{\omega'}\tld{W}_{\leq w_0\tld{w}'}\subset t_{\omega}\tld{W}_{\leq w_0\tld{w}}$. (In particular the presentations are compatible in the sense of \cite[\S 2.2]{MLM}.) 
\item Suppose that $\kappa$ is $(d_\kappa+3)$-deep (or $(d_\kappa+1)$-deep in type $A$). 
Then for any Deligne--Lusztig representation $R = R_s(\mu)$ with $(s,\mu)$ maximally split and $\kappa\in \JH(\ovl{R})$, $R$ is $h_\eta$-generic and $\sigma\in \JH(\ovl{R})$. 
\end{enumerate}
\end{prop}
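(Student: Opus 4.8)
The plan is to prove (1), and then derive (2) from it; the argument follows the pattern of \cite[Proposition 2.3.12]{MLM}, with the combinatorics rewritten in the $\eta$-shifted conventions used here. Since $\kappa$ is $(h_\eta+d_\kappa)$-deep, the chosen presentation $\kappa=F_{(\tld{w},\omega)}$ has $\omega-\eta$ being $(h_\eta+d_\kappa)$-deep in $C_0$. For (1), I would first invoke Remark \ref{rmk:outer} to identify the Deligne--Lusztig representations $R$ with $\kappa\in\JH_\out(\ovl{R})$ as exactly the $R(\tld{s})$ with $\tld{s}\in t_\omega W\,\tld{w}_h\tld{w}$. Writing $\tld{s}=t_\omega w^{-1}\tld{w}_h\tld{w}$ for $w\in W$, one has $\tld{s}(0)-\eta=(\omega-\eta)+w^{-1}(\tld{w}_h\tld{w}(0))$, and $|\langle\tld{w}_h\tld{w}(0),\alpha^\vee\rangle|\leq h_{\tld{w}_h\tld{w}(0)}\leq d_\kappa$ for all roots $\alpha$ by the definition of $d_\kappa$; hence $\tld{s}(0)-\eta$ is still $h_\eta$-deep in $C_0$, so $R(\tld{s})$ is $h_\eta$-generic and Proposition \ref{prop:combinatorial membership of DL} applies to it. As $\kappa$ covers $\sigma$, one has $\sigma\in\JH(\ovl{R}(\tld{s}))$ for every such $\tld{s}$, and Proposition \ref{prop:combinatorial membership of DL}\eqref{item:adm} provides, for each $\tld{s}$, a presentation $\sigma=F_{(\tld{w}'_{\tld{s}},\omega'_{\tld{s}})}$ with $t_{\omega'_{\tld{s}}}\tld{W}_{\leq w_0\tld{w}'_{\tld{s}}}\subseteq \tld{s}\,\Adm(\eta)$.

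Using the bookkeeping of compatible presentations (\cite[\S 2.2]{MLM}, together with the deepness of $\kappa$) one arranges these into a single presentation $\sigma=F_{(\tld{w}',\omega')}$ compatible with $(\tld{w},\omega)$, so that $t_{\omega'}\tld{W}_{\leq w_0\tld{w}'}\subseteq\bigcap_{w\in W}t_\omega w^{-1}\tld{w}_h\tld{w}\,\Adm(\eta)$. Conclusion (1) then reduces to the purely combinatorial inclusion
\[
\bigcap_{w\in W}w^{-1}\tld{w}_h\tld{w}\,\Adm(\eta)\ \subseteq\ \tld{W}_{\leq w_0\tld{w}}.
\]
I would prove this via the convex-geometric description of the admissible set (an element $t_\zeta z'$ lies in $\Adm(\eta)$ precisely when $\zeta\in\mathrm{conv}(W\eta)$, up to the relevant integrality condition) together with the reduced factorization $w_0\tld{w}=(\tld{w}_h\tld{w})\,t_{v^{-1}(\eta)}$ ($v$ the finite part of $\tld{w}$) from \cite[Lemma 4.1.9]{LLL}: demanding that $\tld{x}=t_\zeta x'$ lie in $w^{-1}\tld{w}_h\tld{w}\,\Adm(\eta)$ for \emph{every} $w\in W$ forces the whole $W$-orbit of $\zeta$ into the translate of $\mathrm{conv}(W\eta)$ carved out by $\tld{w}_h\tld{w}$, which is exactly the constraint cutting out the Bruhat interval $\tld{W}_{\leq w_0\tld{w}}$ (the $\eta$-shifted analogue of \cite[Proposition 2.1.6]{MLM}). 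This inclusion, along with the compatibility bookkeeping for the presentations of $\sigma$, is where I expect the main difficulty to lie.

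For (2), the two deepness hypotheses give that $\kappa$ is $(d_\kappa+m)$-deep with $m:=\max(h_\eta,3)$ (or $m:=\max(h_\eta,1)$ in type $A$). Since $\kappa\in\JH(\ovl{R}_s(\mu))$ with $(s,\mu)$ maximally split and $m>2$ (resp.\ $m>0$, by Remark \ref{rmk:typeA}), Lemma \ref{lem:DL containing decent weight} shows $R:=R_s(\mu)$ is $m$-generic, hence $h_\eta$-generic. Presenting $R$ by a lowest alcove presentation and applying Proposition \ref{prop:combinatorial membership of DL}\eqref{item:adm} to $\kappa\in\JH(\ovl{R})$ gives a presentation $\kappa=F_{(\tld{w}_\ast,\omega_\ast)}$ with $t_{\omega_\ast}\tld{W}_{\leq w_0\tld{w}_\ast}\subseteq t_\mu s\,\Adm(\eta)$; feeding this presentation into part (1) yields a presentation $\sigma=F_{(\tld{w}'_\ast,\omega'_\ast)}$ with $t_{\omega'_\ast}\tld{W}_{\leq w_0\tld{w}'_\ast}\subseteq t_{\omega_\ast}\tld{W}_{\leq w_0\tld{w}_\ast}\subseteq t_\mu s\,\Adm(\eta)$, and Proposition \ref{prop:combinatorial membership of DL}\eqref{item:adm} in the reverse direction yields $\sigma\in\JH(\ovl{R})$, as desired.
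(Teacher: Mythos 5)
Your overall strategy for both parts matches the paper's. Part~(2) as you present it is essentially identical to the paper's proof: $R$ is $h_\eta$-generic by Lemma~\ref{lem:DL containing decent weight} (with Remark~\ref{rmk:typeA} covering type $A$), and then $\sigma\in\JH(\ovl{R})$ follows by feeding the lowest alcove presentation supplied by Proposition~\ref{prop:combinatorial membership of DL}\eqref{item:adm} into part~(1) and applying \eqref{item:adm} again.

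For part~(1), the paper does not re-prove the combinatorics; it simply cites the chain of implications $(1)\Rightarrow(2)\Rightarrow(3)$ in \cite[Proposition 2.3.12]{MLM} and explains why the weaker deepness suffices: $(2)\Rightarrow(3)$ there uses no genericity at all, while $(1)\Rightarrow(2)$ only invokes Deligne--Lusztig representations of the outer-JH form $R_s(\omega - s(\tld{w}_h\tld{w})^{-1}(0))$, for which $\mu-\eta$ is automatically $h_\eta$-deep once $\kappa$ is $(h_\eta+d_\kappa)$-deep, so \cite[Theorem 4.2]{DLR} provides the needed generic decomposition pattern. You instead attempt to reconstruct the combinatorics from scratch and correctly identify the outer-JH types $\tld{s}\in t_\omega W\tld{w}_h\tld{w}$ and the deepness of $\tld{s}(0)-\eta$, but your reduction terminates in an unproved (and explicitly flagged) inclusion
$\bigcap_{w\in W}w^{-1}\tld{w}_h\tld{w}\,\Adm(\eta)\subseteq\tld{W}_{\leq w_0\tld{w}}$
together with a hand-waved ``compatibility bookkeeping'' step. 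These two items are precisely where the content of \cite[Proposition 2.3.12]{MLM} lies, and collapsing them into a bare intersection claim is risky: the argument there does not first take intersections over all types and then appeal to a convexity description of $\Adm(\eta)$, but rather fixes, via the uniqueness of compatible presentations (\cite[Lemma 2.2.4]{MLM}), a single presentation $(\tld{w}',\omega')$ of $\sigma$ compatible with $(\tld{w},\omega)$ and then argues with that fixed presentation; the order of these steps matters, and your convexity characterization of $\Adm(\eta)$ as a condition on translation parts only is also not quite accurate (membership in $\Adm(\eta)$ is a condition on the whole alcove, not just its translation part). I would recommend following the paper's route directly: cite \cite[Proposition 2.3.12]{MLM} and justify the relaxation of the genericity hypothesis as above, rather than trying to re-derive the interval inclusion ab initio.
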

\begin{proof} The first item follows by modifying the proof of the implications $(1)\implies (2)\implies (3)$ in \cite[Proposition 2.3.12]{MLM} as we now explain. 
First, the implication $(2)\implies (3)$ does not use any genericity assumption. 
Second, the implication $(1)\implies (2)$ only uses Deligne--Lusztig representations that contain $\kappa$ as an outer Jordan--H\"older factor and that such representations follow Jantzen's generic decomposition pattern (which is the reason for the genericity hypothesis in \emph{loc.~cit.}). 
Our assumptions guarantee that the types that we encounter are of the form $R_s(\mu)$ where $\mu = \omega - s(\tld{w}_h\tld{w})^{-1}(0)$. 
Since $\kappa$ is $(h_\eta+d_\kappa)$-deep, $\mu-\eta$ is $h_\eta$-deep so that \cite[Theorem 4.2]{DLR} guarantees that Jantzen's generic decomposition pattern holds for these $R_s(\mu)$. 

For the second item, the fact that $R$ is $h_\eta$-generic follows from Lemma \ref{lem:DL containing decent weight}, while the fact that $\sigma\in\JH(\ovl{R})$ follows from the first item and Proposition \ref{prop:combinatorial membership of DL}.
\end{proof}
\begin{prop}(Outer weights are isolated under covering)\label{prop:isolating}
Let $R$ be an $h_\eta$-generic Deligne--Lusztig representation and $\kappa,\sigma\in \JH(\ovl{R})$. Assume that
\begin{itemize}
\item $\kappa $ is $(h_\eta+d_\kappa)$-deep; 
\item $\sigma$ is an outer Jordan--H\"older factor of $\ovl{R}$; and 
\item $\kappa$ covers $\sigma$. 
\end{itemize}
Then $\kappa = \sigma$.  
\end{prop}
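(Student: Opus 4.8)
The plan is to pass to combinatorics of the extended affine Weyl group via Propositions \ref{prop:combinatorial membership of DL} and \ref{prop:covering characterization} and Remark \ref{rmk:outer}, and then exploit that the admissible datum attached to an \emph{outer} Jordan--H\"older factor reaches a maximal element of the ambient admissible set. Write $R\cong R_s(\mu)$ with $\mu-\eta$ being $h_\eta$-deep in $C_0$ (possible as $R$ is $h_\eta$-generic). Since $\sigma$ is an outer Jordan--H\"older factor of $\ovl{R}(t_\mu s)$, Remark \ref{rmk:outer}(3) gives a presentation $\sigma=F_{(\tld{w}_\sigma,\omega_\sigma)}$ with $\omega_\sigma=t_\mu s(\tld{w}_h\tld{w}_\sigma)^{-1}(0)$, equivalently $t_{\omega_\sigma}\in t_\mu s(\tld{w}_h\tld{w}_\sigma)^{-1}W$, say $t_{\omega_\sigma}=t_\mu s(\tld{w}_h\tld{w}_\sigma)^{-1}w_\sigma$. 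Setting $c_\sigma\defeq(\tld{w}_h\tld{w}_\sigma)^{-1}w_0\tld{w}_\sigma$, the datum $A_\sigma\defeq t_{\omega_\sigma}\tld{W}_{\leq w_0\tld{w}_\sigma}$ contains $t_\mu s\,c_\sigma=t_{\omega_\sigma}(w_\sigma^{-1}w_0\tld{w}_\sigma)$ (as $w_\sigma^{-1}w_0\tld{w}_\sigma\leq w_0\tld{w}_\sigma$), and $c_\sigma$ is a translation by a $W$-conjugate of $\eta$, hence a maximal element of $\Adm(\eta)$. Separately fix a presentation $\kappa=F_{(\tld{w}_\kappa,\omega_\kappa)}$ with $\omega_\kappa-\eta$ being $(h_\eta+d_\kappa)$-deep in $C_0$; since $\kappa\in\JH(\ovl{R})$, the proof of Proposition \ref{prop:combinatorial membership of DL} produces $\tld{u}\in\tld{W}^+$ with $\tld{u}\uparrow\tld{w}_h\tld{w}_\kappa$, $w\in W$, $t_{\omega_\kappa}=t_\mu s\tld{u}^{-1}w$, and $A_\kappa\defeq t_{\omega_\kappa}\tld{W}_{\leq w_0\tld{w}_\kappa}\subseteq t_\mu s\tld{u}^{-1}\tld{W}_{\leq w_0\tld{w}_\kappa}\subseteq t_\mu s\tld{W}_{\leq c_\kappa}\subseteq t_\mu s\Adm(\eta)$, where $c_\kappa\defeq(\tld{w}_h\tld{w}_\kappa)^{-1}w_0\tld{w}_\kappa$ is again a translation by a $W$-conjugate of $\eta$.

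Now I feed in that $\kappa$ covers $\sigma$. Proposition \ref{prop:covering characterization}(1) gives a presentation $\sigma=F_{(\tld{w}',\omega')}$, compatible with $(\tld{w}_\kappa,\omega_\kappa)$, with $t_{\omega'}\tld{W}_{\leq w_0\tld{w}'}\subseteq A_\kappa$. As $(\tld{w}',\omega')$ and $(\tld{w}_\sigma,\omega_\sigma)$ are presentations of the same Serre weight with second coordinates in $\eta+C_0$, they differ by an element of $\Omega$; all the combinatorial data above is equivariant for this $\Omega$-action, and replacing $(\tld{w}_\kappa,\omega_\kappa)$ by a compatible presentation changes nothing by Remark \ref{rmk:outer}(1), so after such a harmless adjustment we may take $(\tld{w}',\omega')=(\tld{w}_\sigma,\omega_\sigma)$, i.e.\ $A_\sigma\subseteq A_\kappa$. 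Hence $t_\mu s\,c_\sigma\in A_\kappa\subseteq t_\mu s\tld{u}^{-1}\tld{W}_{\leq w_0\tld{w}_\kappa}$, so $c_\sigma\leq\tld{u}^{-1}w_0\tld{w}_\kappa\leq c_\kappa$, the last inequality coming from the chain above. Since $c_\sigma$ is maximal in $\Adm(\eta)$ and $\tld{u}^{-1}w_0\tld{w}_\kappa,c_\kappa\in\Adm(\eta)$, we get $c_\sigma=\tld{u}^{-1}w_0\tld{w}_\kappa=c_\kappa$, and cancelling $w_0\tld{w}_\kappa$ gives $\tld{u}=\tld{w}_h\tld{w}_\kappa$; in particular $t_{\omega_\kappa}=t_\mu s(\tld{w}_h\tld{w}_\kappa)^{-1}w$, so applying both sides to $0$ yields $\omega_\kappa=t_\mu s(\tld{w}_h\tld{w}_\kappa)^{-1}(0)$. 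By Remark \ref{rmk:outer}(3) this means $\kappa$ is the outer Jordan--H\"older factor of $\ovl{R}(t_\mu s)$ corresponding to the image of $\tld{w}_\kappa$ in $W$, while $\sigma$ is the one corresponding to the image of $\tld{w}_\sigma$. Finally $c_\sigma$ (resp.\ $c_\kappa$) is the translation by the image of $\eta$ under the inverse of the finite part of $\tld{w}_\sigma$ (resp.\ $\tld{w}_\kappa$), so $c_\sigma=c_\kappa$ together with regularity of $\eta$ forces $\tld{w}_\sigma$ and $\tld{w}_\kappa$ to have the same image in $W$; therefore $\kappa=\sigma$.

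The crux, and the only place the outer hypothesis on $\sigma$ is genuinely used, is the rigidity above: having the maximal element $t_\mu s\,c_\sigma$ inside $A_\kappa$ pushes $\tld{u}$ all the way to the top $\tld{w}_h\tld{w}_\kappa$ of its $\uparrow$-interval; the conceptual content is that the datum of a non-outer Jordan--H\"older factor of $\ovl{R}$ meets no maximal element of $t_\mu s\Adm(\eta)$. Making this rigorous rests on the reduced-factorization results of \cite{LLL} (used as in the proof of Proposition \ref{prop:combinatorial membership of DL}) and on distinct maximal elements $t_{w(\eta)}$ of $\Adm(\eta)$ (all of the same length) being pairwise incomparable; the remaining routine nuisance is the $\Omega$-bookkeeping needed to align the presentation from Proposition \ref{prop:covering characterization}(1) with $(\tld{w}_\sigma,\omega_\sigma)$. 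An alternative that avoids Proposition \ref{prop:covering characterization} altogether is to use the definition of covering directly: each $R_{s'}\big(\omega_\kappa-s'(\tld{w}_h\tld{w}_\kappa)^{-1}(0)\big)$ for $s'\in W$ has $\kappa$ as an outer Jordan--H\"older factor by Remark \ref{rmk:outer}(3) (the needed $h_\eta$-genericity following from $\kappa$ being $(h_\eta+d_\kappa)$-deep together with the bound $h_{(\tld{w}_h\tld{w}_\kappa)^{-1}(0)}\leq d_\kappa$), so $\sigma$ lies in $\JH$ of all of them; intersecting the admissible constraints of Proposition \ref{prop:combinatorial membership of DL} over $s'\in W$ pins $A_\sigma$ down to the datum of $\kappa$, and the multiplicity-one clause of Definition \ref{def:outer weights} then forces $\sigma=\kappa$.
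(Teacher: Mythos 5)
Your argument runs along the same lines as the paper's proof, but one step is not justified as written: from $t_\mu s\,c_\sigma\in t_\mu s\,\tld{u}^{-1}\tld{W}_{\leq w_0\tld{w}_\kappa}$ you conclude $c_\sigma\leq\tld{u}^{-1}w_0\tld{w}_\kappa$. The left coset $\tld{u}^{-1}\tld{W}_{\leq w_0\tld{w}_\kappa}$ is not a Bruhat interval with top element $\tld{u}^{-1}w_0\tld{w}_\kappa$ unless the factorization $\tld{u}^{-1}\cdot w_0\tld{w}_\kappa$ is reduced, which is not known (and in general need not hold) for an arbitrary $\tld{u}\leq\tld{w}_h\tld{w}_\kappa$. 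Since your subsequent deduction $c_\sigma=\tld{u}^{-1}w_0\tld{w}_\kappa$, and hence $\tld{u}=\tld{w}_h\tld{w}_\kappa$ by cancellation, hinges on that inequality, the chain as written has a hole.

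The repair is exactly what the paper does and keeps your structure intact: write $c_\sigma=\tld{u}^{-1}\tld{v}$ with $\tld{v}\leq w_0\tld{w}_\kappa$. Since $\tld{u}^{-1}\leq(\tld{w}_h\tld{w}_\kappa)^{-1}$ and $(\tld{w}_h\tld{w}_\kappa)^{-1}\cdot w_0\tld{w}_\kappa$ is a reduced factorization by \cite[Lemma 4.1.9]{LLL}, multiplicativity of the Bruhat order gives $c_\sigma\leq c_\kappa$ directly, without passing through $\tld{u}^{-1}w_0\tld{w}_\kappa$. Then $\ell(c_\sigma)=\ell(c_\kappa)=\ell(t_\eta)$ forces $c_\sigma=c_\kappa$, and the chain $\ell((\tld{w}_h\tld{w}_\kappa)^{-1})+\ell(w_0\tld{w}_\kappa)=\ell(c_\sigma)\leq\ell(\tld{u}^{-1})+\ell(\tld{v})\leq\ell((\tld{w}_h\tld{w}_\kappa)^{-1})+\ell(w_0\tld{w}_\kappa)$ collapses, so that $\tld{u}\leq\tld{w}_h\tld{w}_\kappa$ with equal lengths gives $\tld{u}=\tld{w}_h\tld{w}_\kappa$ (and likewise $\tld{v}=w_0\tld{w}_\kappa$). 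With that substitution your proof and the paper's proof coincide; the $\Omega$-bookkeeping you perform is handled in the paper instead via the uniqueness of compatible lowest alcove presentations from \cite[Lemma 2.2.4]{MLM}, but either route is fine.
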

\begin{proof} 
Let $R \defeq R(t_\mu s )$ where $\mu-\eta$ is $h_\eta$-deep in $C_0$. 
The second bullet point (and Definition \ref{def:outer weights}) and Remark \ref{rmk:outer}\eqref{item:outerLAP} implies that $\sigma=F_{(w^\diamond,t_\mu s (\tld{w}_hw^\diamond)^{-1}(0))}$ for some $w\in W$. 
In particular, $\sigma$ is $0$-deep. 
Since $w_0 w^\diamond$ is a reduced factorization by \cite[Lemma 4.1.9]{LLL}, we have that $W\tld{W}_{\leq w_0w^\diamond}=\tld{W}_{\leq w_0w^\diamond}$. 
We see that
\[t_{t_\mu s(\tld{w}_hw^\diamond)^{-1}(0)}\tld{W}_{\leq w_0 w^\diamond}=t_\mu s (\tld{w}_hw^{\diamond})^{-1}\tld{W}_{\leq w_0 w^\diamond}\]
contains $t_\mu s (\tld{w}_hw^\diamond)^{-1}w_0w^\diamond=t_\mu s t_{w^{-1}(\eta)}$.

Now let $\kappa=F_{(\tld{w},\omega)}$ be a compatible lowest alcove presentation of $\kappa$ in the sense of \cite[\S 2.2]{MLM}. 
By Proposition \ref{prop:covering characterization}\eqref{item:vertexcover}, the third bullet point implies
\[t_{t_\mu s(\tld{w}_hw^\diamond)^{-1}(0)}\tld{W}_{\leq w_0 w^\diamond}\subset t_\omega \tld{W}_{\leq w_0\tld{w}}\]
(By \cite[Lemma 2.2.4]{MLM}, there is a unique lowest alcove presentation of $\sigma$ compatible with the given presentation of $\kappa$ in the sense of \cite[\S 2.2]{MLM}. 
Thus, the lowest alcove presentation obtained from Proposition \ref{prop:covering characterization}\eqref{item:vertexcover} coincides with $(w^\diamond,t_\mu s (\tld{w}_hw^\diamond)^{-1}(0))$.) 
Using the previous paragraph, we have 
\begin{equation}\label{eqn:extpoint}
t_\mu s t_{w^{-1}(\eta)}\in t_\omega \tld{W}_{\leq w_0\tld{w}}.
\end{equation}

On the other hand, by Proposition \ref{prop:combinatorial membership of DL}\eqref{item:uparrowreflect}, the fact that $\kappa\in\JH(\ovl{R}(t_\mu s))$ implies that $t_\omega \in t_\mu s \tld{u}^{-1}W$ for some $\tld{u}\in \tld{W}^+$ such that $\tld{u}\uparrow \tld{w}_h\tld{w}$. 
Multiplying both sides by $\tld{W}_{\leq w_0 \tld{w}}$ on the right and recalling from earlier in the proof that $W\tld{W}_{\leq w_0 \tld{w}} = \tld{W}_{\leq w_0 \tld{w}}$, we get 
\begin{equation}\label{eqn:compmatch}
t_\omega \tld{W}_{\leq w_0\tld{w}}=t_\mu s \tld{u}^{-1} \tld{W}_{\leq w_0 \tld{w}}. 
\end{equation}

Putting \eqref{eqn:extpoint} and \eqref{eqn:compmatch} together we learn that
\[t_{w^{-1}(\eta)}=\tld{u}^{-1}\tld{v}\]
for some $\tld{v}\leq w_0\tld{w}$. 
But since $t_{w^{-1}(\eta)}=\tld{u}^{-1}\tld{v}\leq (\tld{w}_h\tld{w})^{-1}w_0\tld{w}=\tld{w}^{-1}t_\eta\tld{w}$ (where the inequality is obtained from $\tld{u} \leq \tld{w}_h\tld{w}$ from \cite[Lemma 4.1.1]{LLL} and $\tld{v} \leq w_0\tld{w}$) and $\ell(t_{w^{-1}(\eta)}) = \ell(\tld{w}^{-1}t_\eta\tld{w})$, we see that $t_{w^{-1}(\eta)} = \tld{u}^{-1}\tld{v} = (\tld{w}_h\tld{w})^{-1}w_0\tld{w} = \tld{w}^{-1}t_\eta\tld{w}$ which implies that $\tld{u} = \tld{w}_h\tld{w}$ and $\tld{v} = w_0\tld{w}$. 
We conclude that 
\begin{itemize}
\item $w$ is the the image of $\tld{w}$ in $W$ and without loss of generality (in particular without changing compatibility) we can and do take $\tld{w}=w^\diamond$; and 
\item $\tld{u}=\tld{w}_h\tld{w}=\tld{w}_hw^\diamond$.
\end{itemize}
Thus $\kappa=F_{(\tld{w},\omega)}=F_{(w^\diamond,t_\mu s (\tld{w}_hw^\diamond)^{-1}(0))}=\sigma$.
\end{proof}

\subsection{$L$-parameters and inertial $L$-parameters}\label{subsec:L parameters}

Suppose that $H$ is a split connected reductive group over $\Z_p$ with connected center and simply connected derived subgroup.
Let $\cO_p$ be a finite \'etale $\Z_p$-algebra, which is necessarily isomorphic to a product $\prod\limits_{v\in S_p} \cO_{v}$ where $S_p$ is a finite set and $\cO_{v}$ is the ring of integers of a finite unramified extension $F_{v}$ of $\Q_p$. Assume that the coefficient ring $\cO$ contains the image of any ring homomorphism $\cO_p \ra \ovl{\Z}_p$.
Let $G_0 = \Res_{\cO_p/\Z_p} (H_{/\cO_p})$ and set $G=G_0\otimes_{\Z_p}\cO$. We make a choice of Borel and maximal torus of $H$, which induces a Borel $B$ and maximal torus $T$ of $G$, and thus make sense of the notations in section \ref{sec:affine Weyl notation}.
We warn the reader that our $H$ and $G$ here are called $G$ and $\underline{G}$ in \cite[\S 1.9.1]{MLM}. We have opted for this choice to lighten the load on notations.

We have the dual group $G^\vee_{/\cO}\defeq \prod_{F_p\ra E}H^{\vee}_{/\cO}$ and the $L$-group ${}^L G\defeq G^\vee\rtimes\Gal(E/\Qp)$ of $G_0\otimes_{\Zp}\Qp$ (here $\Gal(E/\Qp)$ acts on the set $\{F_p\ra E\}$ by post-composition).

An $L$-parameter over $E$ is a $G^\vee(E)$-conjugacy class of $L$-homomorphisms, i.e.~of continuous homomorphisms $\rho:G_{\Qp}\ra{}^L G(E)$ compatible with the projection to $\Gal(E/\Qp)$. An inertial $L$-parameter is a $G^\vee(E)$-conjugacy class of homomorphisms $\tau:I_{\Qp}\ra G^\vee(E)$ with open kernel which admit extensions to $L$-homomorphisms $G_{\Qp}\ra{}^L G(E)$. We have similar notions of (inertial) $L$-parameters over $\cO$ and $\F$.

An inertial $L$-parameter is called \emph{tame} if its restriction to the wild inertia subgroup of $I_{\Qp}$ is trivial. For $(s,\mu) \in W\times X^*(T)$, we have a tame inertial $L$-parameter over $E$ (resp.~over $\F$) $\tau(w,\mu):I_{\Q_p} \ra T^\vee(E)$ (resp.~$\taubar(s,\mu):I_{\Q_p} \ra T^\vee(\F)$) defined by the formulas in \cite[\S 2.4]{MLM}. 
All tame inertial $L$-parameters arise this way from some (in fact many) maximally split $(s,\mu)$.
Let $\tau(t_\mu s) \defeq \tau(s,\mu)$ and $\taubar(t_\mu s) \defeq \taubar(s,\mu)$. 
We say that a tame inertial $L$-parameter $\tau$ over $E$ (resp.~tame inertial $L$-parameter $\taubar$ over $\F$) is $m$-generic if $\tau \cong \tau(\tld{w})$ (resp.~$\taubar \cong \taubar(\tld{w})$) for some $\tld{w} \in \tld{W}$ with $\tld{w}(0) - \eta$ $m$-deep in $C_0$. 
\subsubsection{Herzig's set of predicted weights}
Let $\mathcal{R}$ denote the bijection on the set of Serre weights with $p$-regular highest weight which takes $F(\lambda)$ to $F(\tld{w}_h \cdot \lambda)$. 

\begin{defn}(\cite[Definition 9.2.5]{GHS})
Let $(s,\mu)$ be maximally split and $\taubar \defeq \taubar(s,\mu)$. 
Let 
\[
W^?(\taubar(s,\mu)) =  \mathcal{R}(\JH(\ovl{R}(t_\mu s))) \defeq \{\mathcal{R}(\sigma) \mid \sigma \in \JH(\ovl{R}(t_\mu s)) \textrm{ and $\sigma$ is $p$-regular}\}. 
\]
\end{defn}

If $\rhobar: G_{\Qp} \ra {}^L G(\F)$ has the property that $\rhobar|_{I_{\Qp}}$ is conjugate to $\taubar(s,\mu)$ (in particular, $\rhobar$ is tamely ramified), then we define $W^?(\rhobar)$ to be $W^?(\taubar(s,\mu))$. 
We also say that $\rhobar: G_{\Qp} \ra G^\vee(\F)$ is $m$-generic if $\rhobar^{\mathrm{ss}}|_{I_{\Qp}}$ is $m$-generic. 

Recall the following convenient characterization of $W^?(\taubar)$. 
\begin{prop}\label{prop:characteriztion of Herzig set} 
Assume that $\taubar\cong \ovl{\tau}(\tld{w}(\taubar))$ where $\tld{w}(\taubar)(0)-\eta$ is $h_\eta$-deep in $C_0$. Then $\sigma\in W^?(\taubar)$ if and only if there is a presentation $\sigma=F_{(\tld{w},\omega)}$ such that 
\[\tld{w}(\taubar)\in t_{\omega}\tld{W}_{\leq w_0\tld{w}}.\]
\end{prop}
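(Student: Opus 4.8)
The statement to prove is Proposition \ref{prop:characteriztion of Herzig set}, characterizing membership in $W^?(\taubar)$ combinatorially. The plan is to unwind the definition $W^?(\taubar(s,\mu)) = \mathcal{R}(\JH(\ovl{R}(t_\mu s)))$ and reduce everything to Proposition \ref{prop:combinatorial membership of DL}, which already gives a combinatorial characterization of $\JH(\ovl{R}(t_\mu s))$. Concretely, fix the presentation $\taubar \cong \ovl\tau(\tld{w}(\taubar))$ with $\tld{w}(\taubar)(0)-\eta$ $h_\eta$-deep in $C_0$; writing $\tld{w}(\taubar) = t_\mu s$ we have $\mu - \eta$ is $h_\eta$-deep in $C_0$, so Proposition \ref{prop:combinatorial membership of DL} applies to $R = R(t_\mu s)$.

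First I would relate the two presentations: if $\sigma = F_{(\tld{w},\omega)}$ is a presentation in the target statement, then $\mathcal{R}^{-1}(\sigma) = F_{(\tld{w}_h^{-1}\tld{w}\,{}^\diamond, \omega')}$ for an appropriate $\omega'$ — more precisely, $\mathcal{R}$ sends $F(\lambda)$ to $F(\tld{w}_h\cdot\lambda)$, so in the $(\tld{w},\omega)$-presentation language $\mathcal{R}$ corresponds to precomposition by $\tld{w}_h$ on the $\tld{W}_1$-component (after applying ${}^\diamond$ to land back in $\tld{W}_1$). So $\sigma \in W^?(\taubar)$ if and only if $\mathcal{R}^{-1}(\sigma) \in \JH(\ovl R(t_\mu s))$, and I would rewrite the latter using Proposition \ref{prop:combinatorial membership of DL}\eqref{item:adm}: there is a presentation $\mathcal{R}^{-1}(\sigma) = F_{(\tld{v},\omega)}$ (same $\omega$, since $\mathcal{R}$ does not change the $\omega$-component — only the alcove) with $t_\omega \tld{W}_{\leq w_0 \tld{v}} \subset t_\mu s\,\Adm(\eta)$, where $\tld{v}$ and $\tld{w}$ are related by $\tld{w} = (\tld{w}_h\tld{v})^\diamond$, equivalently $\tld{v} = \tld{w}_h^{-1}\tld{w}$ up to the relevant $\Omega$-twist. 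The key bookkeeping is that $w_0\tld{v} = w_0\tld{w}_h^{-1}\tld{w}$, and one needs to convert the condition "$t_\omega\tld{W}_{\leq w_0\tld{v}} \subset t_\mu s\,\Adm(\eta)$" into "$t_\mu s \in t_\omega \tld{W}_{\leq w_0\tld{w}}$" — i.e. "$\tld{w}(\taubar) \in t_\omega\tld{W}_{\leq w_0\tld{w}}$".

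The bridge between these two forms is the identity $\Adm(\eta) = \cup_{w\in W}\tld{W}_{\leq t_{w(\eta)}}$ together with the reduced-factorization facts from \cite{LLL} (Lemmas 4.1.1 and 4.1.9 there) that $(\tld{w}_h\tld{w})^{-1}w_0\tld{w}$ and $w_0\tld{w}$ are reduced, which let me identify $t_{w_0\tld{w}_h^{-1}\tld{w}}\cdot(\text{base alcove datum})$ with $t_{w^{-1}(\eta)}$-type translations and run the same manipulation as in the proof of Proposition \ref{prop:combinatorial membership of DL} — namely uniquely write $(t_\mu s)^{-1}t_\omega = \tld{u}^{-1}w$ and chase the membership in $\Adm(\eta)$ through the reduced factorization $\tld{u}^{-1}w_0\tld{v}$. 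After carefully matching $\tld{u} = \tld{w}_h\tld{v}$ on the nose (using $h_\eta$-genericity so that Proposition \ref{prop:combinatorial membership of DL} applies and the presentations are forced), the condition $t_\omega\tld{W}_{\leq w_0\tld v}\subset t_\mu s\,\Adm(\eta)$ becomes exactly $t_\mu s\in t_\omega\tld{W}_{\leq w_0\tld w}$.

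The main obstacle I anticipate is purely the combinatorial bookkeeping of the $\mathcal{R}$-twist together with the $\Omega$-ambiguity in the ${}^\diamond$-operation: one must check that applying $\mathcal{R}$ (i.e. replacing $\tld{v}$ by $(\tld{w}_h\tld{v})^\diamond$) interacts correctly with the sets $\tld{W}_{\leq w_0\tld{v}}$ versus $\tld{W}_{\leq w_0\tld{w}}$, and that the "$p$-regular" caveat in the definition of $W^?$ is automatic here since $h_\eta$-genericity of $\taubar$ forces every Jordan–Hölder factor appearing to be deep, hence $p$-regular. Everything else is a direct translation through Proposition \ref{prop:combinatorial membership of DL} and should not require any new input; indeed the proof should be essentially a reshuffling of the proof of the equivalence \eqref{item:uparrowreflect} $\Leftrightarrow$ \eqref{item:adm} there, composed with the $\mathcal{R}$-relabeling and the reduced-factorization lemmas of \cite{LLL}.
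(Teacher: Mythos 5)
Your proposal is correct and takes essentially the same route as the paper: both reduce to Proposition \ref{prop:combinatorial membership of DL} via the observation that $\cR$ fixes the $\omega$-component of a presentation and acts on the $\tld{W}_1$-component by $\tld{w}_h$. The only minor divergence is that the paper applies part \eqref{item:uparrowreflect} directly (so that $\tld{w}(\taubar)\in t_\omega\tld{W}_{\leq w_0\tld{w}}$ drops out from $\bigcup_{\tld{u}\in\tld{W}^+,\,\tld{u}\leq\tld{w}}W\tld{u}=\tld{W}_{\leq w_0\tld{w}}$), while you route through part \eqref{item:adm} and then unwind the $\Adm(\eta)$ condition, which amounts to re-deriving part \eqref{item:uparrowreflect} along the way; also note that you do not need to force $\tld{u}=\tld{w}_h\tld{v}$ on the nose, only the inequality $\tld{u}\leq\tld{w}_h\tld{v}$.
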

\begin{proof} 
We can write $W^?(\taubar)=\cR(\JH(\ovl{R}(t_\mu s)))$ where $\mu-\eta$ is $h_\eta$-deep in $C_0$. Then by Proposition \ref{prop:combinatorial membership of DL}, $\cR^{-1}(\sigma) = \kappa\in \cR^{-1}(W^?(\taubar))$ if and only if there is $\tld{w}\in \tld{W}_1$, $\omega\in \eta+C_0$, and $\tld{u}\in\tld{W}^+$ such that
\begin{itemize}
\item $\kappa=F_{(\tld{w}_h^{-1}\tld{w},\omega)}$;
\item $\tld{u}\uparrow \tld{w}_h(\tld{w}_h)^{-1}\tld{w}=\tld{w}$, or equivalently $\tld{u}\leq \tld{w}$; and
\item $t_\omega \in t_\mu s \tld{u}^{-1}W$.
\end{itemize}
But the last item is equivalent to $\tld{w}(\taubar)=t_\mu s \in t_\omega W \tld{u}$, which in turn is equivalent to $\tld{w}(\taubar)\in t_{\omega}\tld{W}_{\leq w_0\tld{w}}$ by the second item (using that $w'\tld{w}$ is a reduced factorization for any $w'\in W$ by \cite[Lemma 2.2.1]{OBW}).
\end{proof}

\begin{defn}\label{defn:extremal}(Extremal weights)
For $\taubar\cong \ovl{\tau}(\tld{w}(\taubar))$ where $\tld{w}(\taubar)(0)-\eta$ is $h_\eta$-deep in $C_0$, we say that $\sigma\in W^?(\taubar)$ is \emph{extremal} (or \emph{obvious}) if and only if there is a presentation $\sigma=F_{(\tld{w},\omega)}$ such that $\tld{w}(\taubar) \in t_\omega W\tld{w}$. 

We denote the subset of extremal weights in $W^?(\taubar)$ by $W_\obv(\taubar)$. 
\end{defn}

\begin{rmk}\label{rmk:extremal}
Using Proposition \ref{prop:characteriztion of Herzig set} and the fact that $\tld{W}_{\leq w_0\tld{w}} = W\tld{w}$ for $\tld{w} \in \Omega$ (since $w_0\tld{w}$ is a reduced factorization by \cite[Lemma 4.1.9]{LLL}, $\tld{W}_{\leq w_0} = W$, and $\tld{w} \in \Omega$ is minimal with respect to the Bruhat order $\leq$), with $\taubar$ as in Definition \ref{defn:extremal}, if $\sigma=F_{(\tld{w},\omega)} \in W^?(\taubar)$ with $\tld{w} \in \Omega$, then $\sigma \in W_\obv(\taubar)$. 
\end{rmk}

The following lemma, which is a strengthening of \cite[Lemma 4.1.10]{LLL}, is the combinatorial basis for weight elimination results.
\begin{lemma} \label{lem:WE combinatorics}
Let $\taubar\cong \ovl{\tau}(\tld{w}(\taubar))$ where $\tld{w}(\taubar)(0)-\eta$ $h_\eta$-deep in $C_0$. 
Let $\sigma\notin W^?(\taubar)$ be a $d_\sigma$-deep Serre weight. 
Then there exists a $0$-generic Deligne--Lusztig representation $R$ such that 
\begin{itemize}
\item $\sigma \in \JH_\out(\ovl{R})$; and 
\item $\tld{w}(\taubar)\notin t_\nu s \Adm(\eta)$ for any $(s,\nu)$ such that $R \cong R(t_\nu s)$ and $\nu-\eta \in C_0$. 
\end{itemize} 
\end{lemma}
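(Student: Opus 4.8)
The plan is to leverage the combinatorial characterization of $W^?(\taubar)$ from Proposition~\ref{prop:characteriztion of Herzig set} together with the Jordan--H\"older description of Deligne--Lusztig representations from Proposition~\ref{prop:combinatorial membership of DL}, in a way parallel to the proof of \cite[Lemma~4.1.10]{LLL} but with the stronger conclusion that $\sigma$ appears as an \emph{outer} factor of $R$. First I would fix a presentation $\sigma = F_{(\tld{w},\omega)}$ with $\omega - \eta$ being $d_\sigma$-deep in $C_0$ (using that $\sigma$ is $d_\sigma$-deep; after replacing $\tld{w}$ by an element of $X^*(T)\tld{w} \cap \tld{W}_1$ and shifting $\omega$ we may arrange $\tld{w} \in \tld{W}_1$). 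The candidate Deligne--Lusztig representation is then $R \defeq R(t_\nu s)$ where $(s,\nu)$ is chosen so that $t_\nu s = t_\omega w$ for an appropriate $w \in W$ making $\sigma$ the outer Jordan--H\"older factor of $\ovl R$ corresponding to some element of $W$ --- concretely, by Remark~\ref{rmk:outer}\eqref{item:outerLAP} and Definition~\ref{def:outer weights}, one takes $\nu = \omega - s(\tld{w}_h\tld{w})^{-1}(0)$ for a suitable choice of $s$ so that $\nu - \eta \in C_0$ and $R$ is $0$-generic (such a choice exists since $\omega - \eta \in C_0$ and $\tld{w} \in \tld{W}_1$, exactly as in the discussion preceding Proposition~\ref{prop:combinatorial membership of DL}). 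This arranges the first bullet point automatically.

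For the second bullet point, suppose for contradiction that $\tld{w}(\taubar) \in t_{\nu'} s' \Adm(\eta)$ for some $(s',\nu')$ with $R \cong R(t_{\nu'}s')$ and $\nu' - \eta \in C_0$. By Lemma~\ref{lemma:0gen} (applied to the $0$-generic $R$, whose two presentations differ by an element of $\Z R$ after a translation), the presentation $(s',\nu')$ is forced to agree with $(s,\nu)$ up to the ambiguity allowed by $X^0(T)$, so without loss of generality $t_{\nu'}s' = t_\nu s = t_\omega s (\tld{w}_h\tld{w})^{-1}$, hence $t_\nu s \Adm(\eta) = t_\omega s(\tld{w}_h\tld{w})^{-1}\Adm(\eta)$. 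I would then run the equivalence $\eqref{item:uparrowreflect} \Leftrightarrow \eqref{item:adm}$ of Proposition~\ref{prop:combinatorial membership of DL} backwards: the containment $\tld{w}(\taubar) \in t_\nu s \Adm(\eta)$ together with $\sigma = F_{(\tld{w},\omega)}$ would, via the argument in the proof of that proposition (uniquely writing $(t_\nu s)^{-1}t_\omega = \tld{u}^{-1} w$ and using that $w_0\tld{w}$ is a reduced factorization by \cite[Lemma~4.1.9]{LLL}), yield $\tld{u}^{-1}w_0\tld{w} \in \Adm(\eta)$ with $\tld{u} \in \tld{W}^+$, and then \cite[Proposition~2.3.8]{MLM} (or Proposition~\ref{prop:combinatorial membership of DL}\eqref{item:adm} directly) would give $\sigma \in \JH(\ovl R)$ via a presentation satisfying condition~\eqref{item:adm}, which by Proposition~\ref{prop:characteriztion of Herzig set} translates exactly into $\tld{w}(\taubar) \in t_\omega \tld{W}_{\leq w_0\tld{w}}$ for some presentation $F_{(\tld{w},\omega)}$ of $\sigma$ --- i.e. $\sigma \in W^?(\taubar)$, contradicting the hypothesis.

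The main obstacle I anticipate is bookkeeping the $\Omega$- and $X^0(T)$-ambiguities in the presentations of $\sigma$ and of $R$: Proposition~\ref{prop:characteriztion of Herzig set} quantifies over \emph{some} presentation of $\sigma$, while the outer-factor structure pins down a specific one, so one must check that the presentation produced by unwinding the $\Adm$-membership can be normalized to a compatible lowest alcove presentation (in the sense of \cite[\S 2.2]{MLM}, using \cite[Lemma~2.2.4]{MLM}) without losing the relation $t_\nu s = t_\omega s(\tld{w}_h\tld{w})^{-1}$. A secondary subtlety is ensuring that the $s$ chosen in constructing $R$ can simultaneously be taken maximally split and to make $\nu - \eta$ genuinely lie in $C_0$ (not merely dominant); here the hypothesis that $\omega - \eta$ is $d_\sigma$-deep --- hence at least $0$-deep --- in $C_0$, combined with $d_\sigma \leq h_\eta$ and $h_\eta < p$, should suffice, as in the analogous construction in \cite{LLL,MLM}. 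Once these normalizations are in place, the contradiction is immediate from the cited propositions, so the argument is essentially a careful dictionary translation rather than a new combinatorial input.
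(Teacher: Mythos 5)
Your construction of $R$ and the identification of the full family of candidates is correct and matches the paper: fixing a presentation $\sigma = F_{(w^\diamond,\omega)}$ with $\omega-\eta$ being $d_\sigma$-deep in $C_0$, the set of $0$-generic Deligne--Lusztig representations $R$ with $\sigma\in\JH_\out(\ovl{R})$ is precisely $\{R_u\}_{u\in W}$ where $R_u = R(t_{\nu_u}u)$ with $\nu_u = \omega - u\pi(\tld{w}_h w^\diamond)^{-1}(0)$. The difficulty is entirely in the second bullet, and there your argument has a genuine gap.

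The gap is the following. You fix a single $s = u_0$, and suppose for contradiction that $\tld{w}(\taubar)\in t_{\nu_{u_0}}u_0\Adm(\eta)$, hoping to derive $\sigma\in W^?(\taubar)$. But by Proposition~\ref{prop:characteriztion of Herzig set}, the latter requires $t_{-\omega}\tld{w}(\taubar)\leq w_0 w^\diamond$, whereas the hypothesis only gives you $t_{-\omega}\tld{w}(\taubar)\in w''\cdot\Adm(\eta)$ for a \emph{specific} element $w''\in W$ determined by $u_0$ (in the paper's notation, $w'' = u_0 w_0 w^{-1}\tld{w}_hw^\diamond$); that containment is strictly weaker and does not force $\leq w_0 w^\diamond$. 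Moreover, the step you label ``run $\eqref{item:uparrowreflect}\Leftrightarrow\eqref{item:adm}$ of Proposition~\ref{prop:combinatorial membership of DL} backwards'' is conflating two different propositions: Proposition~\ref{prop:combinatorial membership of DL} characterizes when $\sigma\in\JH(\ovl{R})$ (already known here), while what you need is Proposition~\ref{prop:characteriztion of Herzig set} characterizing $W^?(\taubar)$; the quantity $\tld{w}(\taubar)$ does not appear at all in Proposition~\ref{prop:combinatorial membership of DL}. The paper's proof deals with this by taking the contrapositive over the \emph{entire} family $\{R_u\}_{u\in W}$: assuming the second bullet fails for every $u$, one has $(\tld{w}_h w^\diamond)^{-1} w w_0 u^{-1} t_{-\omega}\tld{w}(\taubar)\in\Adm(\eta)$ for all $u$, and then one chooses the particular $u$ for which $w w_0 u^{-1} t_{-\omega}\tld{w}(\taubar)$ decomposes as $w_0\tld{s}$ with $\tld{s}\in\tld{W}^+$. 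Only for that choice does \cite[Proposition~2.1.6]{MLM} apply to conclude $\tld{s}\leq w^\diamond$, whence $t_{-\omega}\tld{w}(\taubar)\in W\tld{W}_{\leq w^\diamond} = \tld{W}_{\leq w_0 w^\diamond}$. The freedom to pick $u$ depending on $\taubar$ is exactly the ingredient your argument omits; without it, a fixed $u_0$ gives no contradiction. (Separately, the identity you wrote $t_\nu s = t_\omega s(\tld{w}_h\tld{w})^{-1}$ is off by the image of $\tld{w}_h\tld{w}$ in $W$, though since $\Adm(\eta)$ is stable under both-sided multiplication by $W$ this is a minor computational slip, not the main issue.)
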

\begin{proof} The proof is the same as that of \cite[Lemma 4.1.10]{LLL} once we upgrade Jantzen's generic decomposition pattern to the results of \cite{DLR}. We include the proof in the more concise notation of the present paper for the reader's convenience. 

First, we claim that there is a $0$-generic Deligne--Lusztig representation $R$ such that  $\sigma \in \JH_\out(\ovl{R})$. 
Fix a presentation $\sigma=F_{(w^\diamond,\omega)}$. 
For any $u\in W$, let $\nu_u \defeq \omega-u\pi(\tld{w}_h w^\diamond)^{-1}(0)$ and $R_u \defeq R(t_{\nu_u}u)$. 
Note that $\sigma$ is $d_\sigma$-deep implies that $\nu_u-\eta$ is $0$-deep in $C_0$. 
Moreover, $\sigma \in \JH_\out(\ovl{R}_u)$. 
We note here that in fact $\{R_u\mid s\in W\}$ is precisely the set of $0$-generic Deligne--Lusztig representations $R$ such that $\sigma \in \JH_\out(\ovl{R})$ by Remark \ref{rmk:outer}\eqref{item:outerLAP}. 

Suppose the conclusion of the lemma does not hold. 
Then by the previous paragraph, for every $u\in W$, there exists $(s,\nu)$ such that $\nu-\eta \in C_0$, $R_u \cong R(t_\nu s)$, and $\tld{w}(\taubar)\in t_\nu s \Adm(\eta)$. 
This in particular implies that there is a lowest alcove presentation of $\sigma$ compatible with $\tld{w}(\taubar)$, and we assume without loss of generality that our fixed presentation is. 
Then $t_{\nu_u}u$ and $t_\nu s$ are compatible presentations of $R_u$. 
Lemma \ref{lemma:0gen} implies that $u = s$ and $\nu_u = \nu$. 

From the previous paragraph, we have that $t_\omega=t_{\nu_u} u  (\tld{w}_hw^\diamond)^{-1}ww_0u^{-1}$ for each $u\in W$ so that 
\[(t_{\nu_u} u)^{-1}\tld{w}(\taubar)=(\tld{w}w^\diamond)^{-1}ww_0u^{-1}t_{-\omega}\tld{w}(\taubar)\in \Adm(\eta).\]
We now choose $u\in W$ so that $ww_0u^{-1}t_{-\omega}\tld{w}(\taubar)=w_0\tld{s}$ with $\tld{s}\in \tld{W}^+$, so that $(\tld{w}_hw^\diamond)^{-1}w_0 \tld{s}\in \Adm(\eta)$. But then by \cite[Proposition 2.1.6]{MLM}, we have 
\[\tld{s}\leq \tld{w}_h^{-1}\tld{w}_hw^\diamond=w^\diamond\]
Since $\tld{s}\in Wt_{-\omega}\tld{w}(\taubar)$ this implies $t_{\omega}\tld{w}(\taubar)\in \tld{W}_{\leq w_0w^\diamond}$. But Proposition \ref{prop:characteriztion of Herzig set} now implies 
$\sigma\in W^?(\taubar)$, a contradiction.
\end{proof}

\subsubsection{Moduli of local $L$-parameters}
To simplify notation and aid the reader with references to the literature, we will take $\cO_p=\cO_K$, the ring of integers of a finite unramified extension $K/\Qp$ and $H = \GL_n$ so that $G_0=\Res_{\cO_K/\Z_p} \GL_n$. 
As noted in \cite[\S 1.9.2]{MLM}, in this setting an $L$-homomorphism over $E$ (resp. over $\F$) is equivalent to the notion of a Galois representation $\rho:G_K\ra \GL_n(E)$ (resp. $\rhobar: G_K\ra \GL_n(\F)$). Similarly, an inertial $L$-parameter over $E$ (resp. over $\F$) is equivalent to the more familiar notion of an inertial type, i.e. a conjugacy class of homomorphisms $I_K \ra \GL_n(E)$ (resp. $I_K \ra \GL_n(\F)$) with open kernels which admit extensions to homomorphisms $W_K \ra \GL_n(E)$ (resp. $W_K \ra \GL_n(\F)$).

In \cite{EG}, Emerton-Gee constructs a Noetherian formal algebraic stack $\mathcal{X} \defeq \mathcal{X}_{K,n}$ over $\Spf \cO$ which parametrizes rank $n$ \'{e}tale $(\varphi,\Gamma)$-modules for $K$ (see \cite[Theorem 1.2.1]{EG}). Informally, the stack $\cX$ is the correct $\ell=p$ version of the moduli spaces of Langlands parameter valued in the Langlands dual group of $G_0$.
The stack $\cX$ has the following properties (see \cite[Theorems 4.8.12 and 6.5.1]{EG}): 
\begin{itemize}
\item 
The finite type points $\cX(\ovl{\F})$ are in bijection with isomorphism classes of Galois representations $\rhobar:G_K\rightarrow \GL_n(\ovl{\F})$. 
\item  The irreducible components of the underlying reduced stack $\mathcal{X}_{\red}$ are indexed by Serre weights for $G_0(\F_p)$. 
\item For a character of $T$ (or equivalently, a cocharacter of $T^\vee$) and an inertial type $\tau$, there is a $p$-adic formal substack $\mathcal{X}^{\lambda,\tau} \subset \mathcal{X}$ parametrizing potentially semistable representations $G_K$ of Hodge--Tate weight $\lambda$ and inertial type $\tau$. If $\rhobar \in \cX^{\lambda,\tau}(\F)$, the potentially semistable Galois deformation ring $R_\rhobar^{\lambda,\tau}$ constructed by Kisin \cite{KisinPss} is a versal ring to $\cX^{\lambda,\tau}$ at $\rhobar$. 
\end{itemize}
Note that in \cite{MLM}, the notation $\cX^{\lambda,\tau}$ is used to denote the smaller stack that parametrizes only potentially crystalline as opposed to potentially semistable Galois representations. However, there is no difference between the two notions when $\tau$ is tame and $0$-generic, so we can mostly ignore this difference. 

\begin{prop}\label{prop:wtintersect}
If $\rhobar: G_{\Q_p} \ra ^L\GL_n(\F)$ is such that $\rhobar|_{I_{\Q_p}}$ is semisimple and $(n-1)$-generic and $\tau$ is an $n$-generic tame inertial type, then the following are equivalent: 
\begin{enumerate}
\item \label{item:red} $\rhobar|_{G_K} \in \cX^{\eta,\tau}$; 
\item \label{item:admissible} there exist presentations $\tld{w}(\rhobar|_{I_{\Q_p}})$ and $\tld{w}(\tau)$ such that $\tld{w}(\rhobar|_{I_{\Q_p}})\in \Adm(\eta)\tld{w}(\tau)$; 
\item \label{item:wtintersect} $\JH( \ovl{R}(t_\mu s))\cap W^?(\rhobar|_{I_{\Q_p}}) \neq \emptyset$; 
\item \label{item:extmintersect} $\JH( \ovl{R}(t_\mu s))\cap W_{\obv}(\rhobar|_{I_{\Q_p}}) \neq \emptyset$; and
\item \label{item:outintersect} $\JH_\out( \ovl{R}(t_\mu s))\cap W^?(\rhobar|_{I_{\Q_p}}) \neq \emptyset$. 
\end{enumerate}
\end{prop}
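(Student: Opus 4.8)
The plan is to prove the equivalences by exhibiting a cycle of implications, using the combinatorial dictionary established in Propositions \ref{prop:combinatorial membership of DL}, \ref{prop:characteriztion of Herzig set} and the admissibility machinery. Write $\taubar \defeq \rhobar|_{I_{\Q_p}}$ and fix a presentation $\tld{w}(\taubar)$ with $\tld{w}(\taubar)(0)-\eta$ $h_\eta$-deep in $C_0$ (possible since $\taubar$ is $(n-1)$-generic and $h_\eta \leq n-1$); similarly fix $\tld{w}(\tau) = t_\mu s$ with $\mu-\eta$ $h_\eta$-deep in $C_0$ using $n$-genericity of $\tau$. The cheap implications are $\eqref{item:extmintersect} \Rightarrow \eqref{item:wtintersect}$ and $\eqref{item:outintersect} \Rightarrow \eqref{item:wtintersect}$, which are immediate since $W_\obv(\taubar) \subset W^?(\taubar)$ and $\JH_\out(\ovl{R}) \subset \JH(\ovl{R})$.

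First I would handle the core combinatorial equivalence $\eqref{item:admissible} \Leftrightarrow \eqref{item:wtintersect}$. For this, I would unwind both sides via Proposition \ref{prop:characteriztion of Herzig set}: membership $\sigma = F_{(\tld{w},\omega)} \in W^?(\taubar) \cap \JH(\ovl{R}(t_\mu s))$ is equivalent to the simultaneous conditions $\tld{w}(\taubar) \in t_\omega \tld{W}_{\leq w_0\tld{w}}$ and (by Proposition \ref{prop:combinatorial membership of DL}\eqref{item:adm}) $t_\omega \tld{W}_{\leq w_0\tld{w}} \subset t_\mu s\, \Adm(\eta)$; chaining these gives $\tld{w}(\taubar) \in t_\mu s\, \Adm(\eta)$, which since $\Adm(\eta)$ is stable under $\tld{u} \mapsto w_0 \tld{u} w_0$ and under inversion (standard properties of the admissible set) rearranges to $\tld{w}(\taubar) \in \Adm(\eta) \tld{w}(\tau)$ after adjusting the presentation of $\taubar$ by $W$ on the left. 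Conversely, given $\eqref{item:admissible}$, one produces the Serre weight as the ``extremal'' piece: writing $\tld{w}(\taubar) = \tld{z}\, t_\mu s$ with $\tld{z} \in \Adm(\eta)$, decompose $\tld{z} t_\mu s = t_\omega w \tld{w}^{-1}$ appropriately and check that $\sigma := F_{(\tld{w},\omega)}$ lies in both sets; this is exactly the argument appearing in the proof of Lemma \ref{lem:WE combinatorics} run in the non-eliminating direction. This same bookkeeping gives $\eqref{item:admissible} \Rightarrow \eqref{item:extmintersect}$ (the weight one extracts is visibly of the form $F_{(\tld{w},\omega)}$ with $\tld{w} \in \Omega$ when $\tld{z}$ is taken to be a maximal element, hence obvious by Remark \ref{rmk:extremal}) and likewise $\eqref{item:admissible} \Rightarrow \eqref{item:outintersect}$ (the extracted weight is an \emph{outer} Jordan--H\"older factor by construction, cf. Remark \ref{rmk:outer}\eqref{item:outerLAP}). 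So the combinatorial block $\eqref{item:admissible} \Leftrightarrow \eqref{item:wtintersect} \Leftrightarrow \eqref{item:extmintersect} \Leftrightarrow \eqref{item:outintersect}$ closes up.

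It remains to connect these to the geometric condition $\eqref{item:red}$, i.e. $\rhobar|_{G_K} \in \cX^{\eta,\tau}$. For $\eqref{item:red} \Rightarrow \eqref{item:wtintersect}$: if $\rhobar|_{G_K}$ admits a potentially semistable lift of Hodge--Tate weight $\eta$ and tame type $\tau$, reduce it mod $p$; the Serre weights appearing in the corresponding Deligne--Lusztig representation via inertial local Langlands are exactly $\JH(\ovl{R}(t_\mu s))$, and local-global / local model comparison (as in \cite{MLM}, using that $\tau$ is $n$-generic and tame so $\cX^{\eta,\tau}$ agrees with its potentially crystalline counterpart and is cut out by a local model) forces $\rhobar|_{G_K}$ to lie on $\cC_\sigma$ for some $\sigma \in W^g(\taubar) \cap \JH(\ovl{R}(t_\mu s))$; under the genericity hypotheses $W^g(\taubar) = W^?(\taubar)$, giving $\eqref{item:wtintersect}$. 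For the reverse $\eqref{item:admissible} \Rightarrow \eqref{item:red}$, I would again invoke the local model description: $\cX^{\eta,\tau}$ is (up to the generic part) the quotient of a union of affine Schubert varieties indexed by $\Adm(\eta)$, translated by $\tld{w}(\tau)$, and the condition that $\rhobar|_{G_K}$ (equivalently its Kisin module, equivalently $\tld{w}(\taubar)$ as a point in the affine flag variety) lies in this union is precisely $\tld{w}(\taubar) \in \Adm(\eta)\tld{w}(\tau)$. I expect \textbf{this last step --- the clean translation between the geometric membership $\rhobar|_{G_K} \in \cX^{\eta,\tau}$ and the affine-Weyl-group admissibility condition --- to be the main obstacle}, since it requires importing the full force of the local model theory for tame potentially semistable deformation rings from \cite{MLM} (and checking that the ``semistable vs.\ crystalline'' discrepancy and the shape of $\rhobar$ being merely semisimple rather than split cause no trouble), whereas everything else is essentially a rearrangement of the combinatorial statements already proved above.
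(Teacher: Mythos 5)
Your proposal correctly identifies and isolates the easy implications, and the combinatorial block $(2)\Leftrightarrow(3)\Rightarrow(4),(5)$ is a reasonable reorganization of the same ideas (the paper goes $(3)\Rightarrow(4)$ and $(3)\Rightarrow(5)$ directly via \cite[Proposition 4.4.1]{LLL} and a variant thereof, while you extract the extremal/outer weight from the admissibility statement $(2)$; both ultimately rest on Propositions~\ref{prop:combinatorial membership of DL} and~\ref{prop:characteriztion of Herzig set}). One minor caution: passing from $\tld{w}(\taubar)\in t_\mu s\,\Adm(\eta)$ to $\tld{w}(\taubar)\in\Adm(\eta)\,\tld{w}(\tau)$ is not a direct consequence of stability of $\Adm(\eta)$ under inversion and conjugation by $w_0$, since $t_\mu s$ does not normalize $\Adm(\eta)$; the paper sidesteps this by citing \cite[Proposition~4.4.2]{LLL}, where the requisite bookkeeping of presentations is done.

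The genuine gap is exactly where you predicted: the bridge to condition $(1)$. Your plan for $(1)\Rightarrow(3)$ and $(2)\Rightarrow(1)$ invokes the topological/local-model description of $\cX^{\eta,\tau}$ (that $\cX^{\eta,\tau}_\red$ is the union of $\cC_\sigma$ for $\sigma\in\JH(\ovl{R}(t_\mu s))$, resp.\ that $\cX^{\eta,\tau}$ is cut out by affine Schubert cells indexed by $\Adm(\eta)\tld{w}(\tau)$). But Proposition~\ref{prop:topological BM} requires $\mu-\eta$ to be $(2h_\eta+2)=2n$-deep, whereas Proposition~\ref{prop:wtintersect} only assumes $\tau$ is $n$-generic and $\rhobar|_{I_{\Qp}}$ is $(n-1)$-generic. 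At this lower genericity the stack-level local model description is simply not available, so the argument cannot go through as stated. (Your appeal to ``$W^g=W^?$'' in the $(1)\Rightarrow(3)$ step is also more than is needed and itself requires much stronger genericity; only $W^g\subset W^?$ would be relevant there.) The paper's actual route avoids this: $(1)\Rightarrow(2)$ is \cite[Theorem~3.2.1]{LLL}, a \emph{Kisin-module shape} computation that works at $(n-1)$-genericity by arguing on $\varphi$-modules directly rather than via the moduli stack; and $(3)\Rightarrow(1)$ is \cite[Theorem~4.4.3]{LLL}, which \emph{constructs} a potentially crystalline lift of the requisite type explicitly. Both inputs are precisely calibrated to the genericity stated in the proposition and cannot be replaced by the local-model description without losing roughly a factor of two in the genericity bound.
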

\begin{proof}
\eqref{item:red} implies \eqref{item:admissible} follows from \cite[Theorem 3.2.1]{LLL}. 
\eqref{item:admissible} implies \eqref{item:wtintersect} by \cite[Proposition 4.4.2]{LLL} with the improved descriptions of $W^?(\rhobar|_{I_{\Q_p}})$ and $\JH( \ovl{R}(t_\mu s))$ from \cite[Theorem 4.2]{DLR}. 
\eqref{item:wtintersect} implies \eqref{item:extmintersect} by \cite[Proposition 4.4.1]{LLL} (and \cite[Theorem 4.2]{DLR}). 
By a similar argument as in the proof of \cite[Proposition 4.4.1]{LLL} but taking $\omega \in X^*(T)$ so that $t_{-\omega}\tld{w}_2 \in \tld{W}_1$ and writing $\pi^{-1}(\tld{w}) = (t_{-\omega}\tld{w}_2^{-1} w' (t_{-(w')^{-1}(\omega)}\tld{w}_1)$, we see that $W^?(\rhobar|_{I_{\Q_p}})$ contains the outer weight of $\JH_\out( \ovl{R}(t_\mu s))$ corresponding to $w_0w_2\in W$ where $w_2\in W$ denotes the image of $\tld{w}_2$. 
Thus \eqref{item:wtintersect} implies that \eqref{item:outintersect} 
Clearly, either of \eqref{item:extmintersect} or \eqref{item:outintersect} implies \eqref{item:wtintersect}. 
\eqref{item:wtintersect} implies \eqref{item:red} as in the proof of \cite[Theorem 4.4.3]{LLL}. 
\end{proof}

\begin{prop}\label{prop:lower bound of stacks}
\begin{enumerate}
\item Suppose that $\tau \defeq \tau(s,\mu)$ is a tame inertial $L$-homomorphism such that $R(t_\mu s)$ is irreducible (e.g.~if $\mu-\eta$ is $1$-deep in $C_0$, by \cite[Theorem 6.8]{DeligneLusztig}), and that $\sigma \in \JH(\ovl{R}(t_\mu s))$. 
Then $\cC_\sigma \subset \mathcal{X}^{\eta,\tau}$. 
\item For any $\lambda \in X_1(T)$, $\cC_{F(\lambda)} \subset \mathcal{X}^{\eta,\tau(1,\lambda)}$. 
\end{enumerate}
\end{prop}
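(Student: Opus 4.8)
The plan is to deduce both parts from the closedness of $\cX^{\eta,\tau}$ in $\cX$ together with a Zariski dense supply of finite type points of $\cC_\sigma$ lying on $\cX^{\eta,\tau}$. Since $\cX^{\eta,\tau}$ is a closed $p$-adic formal substack of $\cX$ and $\cC_\sigma$ is irreducible and reduced, it suffices in each case to exhibit a Zariski dense subset $U\subset\cC_\sigma(\ovl{\F})$ contained in $\cX^{\eta,\tau}(\ovl{\F})$; then $\cC_\sigma=\ovl{U}\subset\cX^{\eta,\tau}$.

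For part (1) in the case $\sigma$ is $p$-regular and $\tau$ is generic enough, I would take $U$ to be a generic tame locus of $\cC_\sigma$: by the construction of the irreducible components in \cite{EG} (see also \cite[Theorem 3.2.1]{LLL}), $\cC_\sigma$ is the Zariski closure of a family of $\rhobar$ with $\rhobar|_{I_{\Q_p}}$ semisimple, tame, and arbitrarily deep, all satisfying $\sigma\in W_{\obv}(\rhobar|_{I_{\Q_p}})\subset W^?(\rhobar|_{I_{\Q_p}})$; let $U$ be the still dense sublocus where $\rhobar|_{I_{\Q_p}}$ is deep enough for Proposition \ref{prop:wtintersect} to apply. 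Since $\sigma\in\JH(\ovl R(t_\mu s))$ by hypothesis, for $\rhobar\in U$ we get $\JH(\ovl R(t_\mu s))\cap W^?(\rhobar|_{I_{\Q_p}})\neq\emptyset$, so the implication \eqref{item:wtintersect}$\Rightarrow$\eqref{item:red} of Proposition \ref{prop:wtintersect} yields $\rhobar|_{G_K}\in\cX^{\eta,\tau}$, as required.

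The remaining cases — $\sigma$ not $p$-regular, $\tau$ only mildly generic (e.g.\ $\mu-\eta$ merely $1$-deep in $C_0$, as is enough for $R(t_\mu s)$ to be irreducible by \cite[Theorem 6.8]{DeligneLusztig}), and all of part (2) — would be handled instead through the explicit description of the components in \cite{EG}. Namely, $\cC_{F(\lambda)}$ is the Zariski closure of an ``ordinary'' family of successive extensions of characters attached to $\lambda$, each member of which admits an ordinary potentially crystalline lift obtained by lifting the graded pieces to crystalline characters carrying the prescribed Hodge--Tate and tame data; for part (2) one checks that such a lift has Hodge--Tate weight $\eta$ and inertial type $\tau(1,\lambda)$ in the normalization of this paper, so that the dense ordinary locus lies in $\cX^{\eta,\tau(1,\lambda)}$ and hence $\cC_{F(\lambda)}\subset\cX^{\eta,\tau(1,\lambda)}$. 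The remaining cases of part (1) then follow either by the same analysis of the ordinary lifts or by spreading the conclusion from the generic types of the previous paragraph, using that the potentially semistable stacks $\cX^{\eta,\tau'}$ are $\cO$-flat and vary in a family over Hodge and inertial types.

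The main obstacle is precisely this gap in genericity: establishing which components $\cC_{\sigma'}$ meet $\cX^{\eta,\tau}$ when $\tau$ is tame but not generic enough for the local model techniques of \cite{MLM} (and when $\sigma$ is not $p$-regular), together with the bookkeeping needed to match the Hodge--Tate weights and inertial types of the ordinary crystalline lifts against the $\eta$-shifted normalization of $\cX^{\eta,\tau(1,\lambda)}$ used here.
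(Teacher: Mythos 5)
Your opening reduction is fine: since $\cX^{\eta,\tau}$ is a closed $p$-adic formal substack and $\cC_\sigma$ is irreducible, finding a single (let alone a dense set of) $\ovl{\F}$-point of $\cC_\sigma$ not on any other component and lying in $\cX^{\eta,\tau}$ would suffice, because $\cX^{\eta,\tau}_\red$ is moreover a union of irreducible components of $\cX_\red$. And for part (1) when $\tau$ happens to be $n$-generic and $\sigma$ is $p$-regular, your route through the implication \eqref{item:wtintersect}$\Rightarrow$\eqref{item:red} of Proposition~\ref{prop:wtintersect} is not circular and does work. But this is a genuinely different mechanism from what the paper does: Proposition~\ref{prop:wtintersect} rests on the local (Kisin module/local model) results of \cite{LLL}, whereas the paper's proof of this proposition is entirely global and is chosen precisely because it works outside the genericity range of those local results.

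The two gaps you flag yourself are real and are not small bookkeeping issues. First, the statement only assumes $R(t_\mu s)$ irreducible (e.g.~$\mu-\eta$ merely $1$-deep), which is well below the $n$-genericity needed for Proposition~\ref{prop:wtintersect}, so the local-model route does not cover the stated generality. Second, your proposed handling of part (2) and of the low-genericity cases is not just incomplete but incorrect as phrased: an ordinary \emph{crystalline} lift of Hodge--Tate weights $\lambda+\eta$ has trivial inertial type, not inertial type $\tau(1,\lambda)$. The passage from such a lift to a point of $\cX^{\eta,\tau(1,\lambda)}$ is not a matter of re-reading Hodge--Tate and inertial data off the filtered module; it requires an actual change of Hodge type, which the paper achieves via inertial local Langlands and \emph{potentially semistable} (with nontrivial monodromy $N$) automorphic lifts. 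This is exactly why the paper uses the potentially semistable stack rather than the potentially crystalline one in this proposition.

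The paper's proof is different in kind and uniform across both parts and all genericities. One chooses (by \cite[Lemma~5.5.4]{EG}) a single $\rhobar$ lying on $\cC_\sigma$ and on no other component, and admitting an ordinary crystalline lift of weight $\lambda+\eta$; one globalizes $\rhobar$ to $\ovl{r}$ which then contributes to algebraic automorphic forms with coefficients in any $V$ with $\sigma\in\JH(\ovl{V})$. Taking $V=R_s(\mu)$ for part (1) (irreducible, so an inertial local Langlands correspondent of $\tau(s,\mu)$ by \cite[Proposition~2.5.5]{MLM}), resp.~an irreducible constituent of $R_1(\lambda)$ for part (2) (a correspondent of a Weil--Deligne type $(\tau(1,\lambda),N)$), local-global compatibility produces a potentially semistable lift of $\rhobar$ of Hodge--Tate weight $\eta$ and inertial type $\tau$. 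Thus $\rhobar\in\cX^{\eta,\tau}_\red$, and since $\rhobar$ lies on the unique irreducible component $\cC_\sigma$ and $\cX^{\eta,\tau}_\red$ is a union of irreducible components, $\cC_\sigma\subset\cX^{\eta,\tau}$. If you want to salvage your approach, you would need to replace the Proposition~\ref{prop:wtintersect} step with this global change-of-type argument in the non-generic range, at which point you would in effect have reproduced the paper's proof.
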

\begin{proof} This follows from an argument in the proof of \cite[Proposition 3.3.8]{GL3wild}, which we briefly recall. Setting $\sigma=F(\lambda)$ with $\lambda\in X_1(T)$, by \cite[Lemma 5.5.4]{EG} we can choose a Galois representation $\rhobar$ such that
\begin{itemize}
\item $\rhobar$ corresponds to a point on $\cC_\sigma$, which is the unique irreducible component of $\cX_\red$ containing it; and
\item $\rhobar$ admits an ordinary crystalline lift with Hodge-Tate weights $\lambda+\eta$. 
\end{itemize}  
By choosing an (ordinary at $p$) globalization $\ovl{r}$ of $\rhobar$, the second item implies that $\ovl{r}$ contributes to a space of algebraic automorphic form of weight $\otimes_{v| p}\sigma$, and thus also to the space of algebraic automorphic form with coefficient $\otimes_{v|p}V$ for any representation $V$ such that $\ovl{V}$ contains $\sigma$ as a Jordan-H\"older factor.

We now split the argument into two cases:
\begin{enumerate}
\item In the context of the first item, we choose $V$ to be $R_s(\mu)$. Since $R_s(\mu)$ is irreducible, it is an inertial local Langlands correspondent of $\tau(s,\mu)$ in the sense of \cite[\S 2.5]{MLM} by \cite[Proposition 2.5.5]{MLM}. Now our chosen globalization gives the existence of automorphic forms with coefficient $\otimes_{v|p}R_s(\mu)$ whose associated Galois representations lifts $\ovl{r}$. 
By local-global compatibility, the local components at $p$ of such automorphic Galois representations produces semistable lifts of $\rhobar$ of Hodge-Tate weight $\eta$ and inertial type $\tau$. This shows that $\rhobar\in\cX^{\eta,\tau}_\red$. Since $\cX^{\eta,\tau}_{\red}$ is a union of irreducible components of $\cX_\red$, it must containing $\cC_\sigma$, the unique irreducible component containing $\rhobar$.
\item In the context of the second item, we choose $V$ to be an irreducible factor of the (possibly virtual) representation $R_1(\lambda)$. By \cite[Proposition 2.5.5]{MLM}, such $V$ is an inertial local Langlands correspondent of a Weil-Deligne inertial type of the form $(\tau(1,\lambda),N)$. The same local-global compatibility argument as above then shows that $C_{F(\lambda)}$ occurs in $\cX^{\eta,\tau(1,\lambda)}$.
\end{enumerate}
\end{proof}

Proposition \ref{prop:lower bound of stacks} allows us to get a description of the underlying topological space of $\cX^{\eta,\tau}$, which strengthens \cite[Theorem 7.4.2]{MLM}:
\begin{prop} \label{prop:topological BM}Suppose $\tau=\tau(s,\mu)$ where $\mu-\eta$ is $(2h_{\eta}+2)$-deep in $C_0$. 
Then
\begin{enumerate}
\item $\cX^{\eta,\tau}_{\red}=\bigcup_{\sigma\in\JH(\ovl{R}(t_\mu s))} \cC_\sigma$
\item For $\sigma\in \JH_{\out}(\ovl{R}(t_\mu s))$, $\cX^{\eta,\tau}_\F$ is generically reduced along $\cC_\sigma$.
\end{enumerate}

\end{prop}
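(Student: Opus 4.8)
The plan is to obtain (1) as a two-sided bound, and to read off (2) from the finer information supplied by the local model comparison used for the upper bound.

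For the inclusion $\supseteq$ in (1): since $\mu-\eta$ is $(2h_\eta+2)$-deep in $C_0$ it is in particular $1$-deep, so $R(t_\mu s)$ is irreducible by \cite[Theorem 6.8]{DeligneLusztig}, and Proposition~\ref{prop:lower bound of stacks}(1) then gives $\cC_\sigma\subset\cX^{\eta,\tau}$ for every $\sigma\in\JH(\ovl R(t_\mu s))$. For the reverse inclusion, recall from the properties of $\cX^{\lambda,\tau}$ discussed above (see \cite[\S 4.8]{EG}) that $\cX^{\eta,\tau}$ is equidimensional of the same dimension as $\cX$, so $\cX^{\eta,\tau}_{\red}$ is a union of irreducible components of $\cX_{\red}$, say $\cX^{\eta,\tau}_{\red}=\bigcup_{\sigma\in S}\cC_\sigma$. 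To show $S\subseteq\JH(\ovl R(t_\mu s))$ I would invoke the description of the special fiber of the (naive) local model attached to $\cX^{\eta,\tau}$ from \cite[\S 7]{MLM}: up to a formally smooth factor, its underlying reduced subscheme is a union of affine Schubert varieties indexed by the admissible set $\Adm(\eta)$ translated by a presentation $\tld{w}(\tau)$, whose components (the maximal admissible translates) correspond---under the combinatorial dictionary recorded in Proposition~\ref{prop:wtintersect} (the equivalence \eqref{item:admissible}$\Leftrightarrow$\eqref{item:wtintersect}, together with the bookkeeping relating $\Adm(\eta)$-membership to $\JH(\ovl R(t_\mu s))$ through the bijection $\cR$)---precisely to the Serre weights lying in $\JH(\ovl R(t_\mu s))$. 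The role of the hypothesis that $\mu-\eta$ is $(2h_\eta+2)$-deep is exactly to make this local model comparison valid at the level of underlying topological spaces. Combining the two inclusions yields (1).

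For (2), fix $\sigma\in\JH_\out(\ovl R(t_\mu s))$; that $\cX^{\eta,\tau}_{\F}$ is generically reduced along $\cC_\sigma$ says precisely that $\cC_\sigma$ occurs with multiplicity one in the special fiber cycle $[\cX^{\eta,\tau}_{\F}]$. Through the same local model comparison, this multiplicity equals the multiplicity of the corresponding Schubert component in the special fiber cycle of the local model. For $\sigma$ an outer (equivalently extremal) Jordan--H\"older factor one may, by Definition~\ref{def:outer weights} and Remark~\ref{rmk:outer}, choose a presentation $\sigma=F_{(w^\diamond,\omega)}$ with $\tld{w}(\tau)\in t_\omega W\,\tld{w}_h w^\diamond$; translating, the relevant admissible element is of the length-additive ``obvious'' form, so that a generic point of its Schubert component lies in the smooth locus of the local model special fiber and the multiplicity is $1$. (Equivalently, this multiplicity should agree with the Breuil--M\'ezard multiplicity $[\ovl R(t_\mu s):\sigma]$, which equals $1$ for an outer weight by \cite[Theorem 5.4(1)]{DLR}.) This gives (2).

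The main obstacle is the upper bound in (1): one must control \emph{every} irreducible component of $\cX^{\eta,\tau}_{\red}$, and at the moderate genericity $\mu-\eta$ $(2h_\eta+2)$-deep the only route I see is through the local model machinery of \cite[\S 7]{MLM}. The real work is therefore to verify that the local model comparison of \emph{loc.\ cit.} survives under this weaker genericity, and to track the combinatorial dictionary---in particular the $\cR$-bijection and the passage between $\Adm(\eta)$ and $\JH(\ovl R(t_\mu s))$---precisely enough to pin down which components appear and, for the outer ones, with what multiplicity.
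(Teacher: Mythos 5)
Your treatment of (1) tracks the paper's argument: the lower bound comes from Proposition~\ref{prop:lower bound of stacks} (after noting $(2h_\eta+2)$-deep forces $R(t_\mu s)$ irreducible), and the upper bound comes from the local model comparison of \cite[\S 7]{MLM}, which under the present genericity is what \cite[Remark 7.4.3(3)]{MLM} records. You correctly flag that one has to check this comparison is available at the weaker depth, but you leave that as a to-do; the paper's proof simply cites the remark as already covering this case, so this is fine as far as it goes.

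The genuine gap is in (2). You assert that for $\sigma\in\JH_\out(\ovl R(t_\mu s))$, ``a generic point of its Schubert component lies in the smooth locus of the local model special fiber,'' and you treat this as obvious because the admissible element is ``of length-additive obvious form.'' That is precisely the content that has to be proved, and the paper's argument is not just a smoothness claim stated in the air. What the paper actually does: (i) identify that the outer components correspond to admissible pairs of the very specific shape $(\tld w_j,\tld w_h^{-1}\tld w_j)$ with $\tld w_j\in\tld W_1$, via \cite[Theorem 4.6.2, Proposition 2.3.7]{MLM}; (ii) pass to the naive local model $M^{\nv}_\cJ(\leq\eta,\nabla_{\mathbf a_\tau})_\F$ through the torus torsor of \cite[Proposition 4.5.1]{MLM}; (iii) observe that $\tld w_j w_0\tld w_h^{-1}\tld w_j=t_{w_j(\eta)}$, so \cite[Theorem 9.3]{PZ} makes the scheme-theoretic intersection $M^{\nv}_\cJ(\leq\eta,\nabla_{\mathbf a_\tau})_\F\cap\prod_\cJ S^0_\F((\tld w_jw_0\tld w_h^{-1}\tld w_j)^*)$ \emph{open} in $M^{\nv}_\cJ$; and (iv) invoke (the product version of) \cite[Theorem 4.2.4]{MLM} to see this open piece is an affine space, hence reduced. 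Your sketch collapses all of (ii)--(iv) into the slogan ``smooth locus,'' which is a conclusion, not an argument. Also, your parenthetical appeal to the Breuil--M\'ezard multiplicity $[\ovl R(t_\mu s):\sigma]=1$ is circular here: the equality between geometric multiplicities on $\cX^{\eta,\tau}_\F$ and representation-theoretic multiplicities is one of the theorems this circle of results is establishing, not a known input you can cite to prove generic reducedness.
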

\begin{proof}\begin{enumerate}
\item As explained in \cite[Remark 7.4.3(3)]{MLM}, under our current assumptions we get that $\cX^{\eta,\tau}_\red\subset \bigcup_{\sigma\in\JH(\ovl{R}(t_\mu s))} \cC_\sigma$. On the other hand, Proposition \ref{prop:lower bound of stacks} shows that the $\cX^{\eta,\tau}_\red\supset \bigcup_{\sigma\in\JH(\ovl{R}(t_\mu s))} \cC_\sigma$.
\item
Combining the first item with \cite[Remark 7.4.3(3)]{MLM}, diagram \cite[{(7.17)}]{MLM} is valid under our weaker genericity assumptions. Thus a smooth cover of $\cX^{\eta,\tau}_\F$ embeds in the naive local model denoted by $\tld{M}^{\nv}_\cJ(\leq \eta,\nabla_{\textbf{a}_\tau})_\F$. By the proof of \cite[Theorem 4.6.2]{MLM}, the (maximal dimensional) irreducible components of $\tld{M}^{\nv}_\cJ(\leq \eta,\nabla_{\textbf{a}_\tau})_\F$ are parametrized by admissible pairs (cf \cite[{(2.2)}]{MLM}), which in turn are in bijection with $\JH(\ovl{R}(t_\mu s))$ by \cite[Proposition 2.3.7]{MLM}. Under this parametrization, the components $\cC_\sigma\subset \cX^{\eta,\tau}_\F$ for $\sigma\in \JH_{\out}(\ovl{R}(t_\mu s))$ correspond to the irreducible components of $\tld{M}^{\nv}_\cJ(\leq \eta,\nabla_{\textbf{a}_\tau})_\F$ parametrized by the admissible pairs $(\tld{w}_j,\tld{w}_h^{-1}\tld{w}_j)_{j\in\cJ}$ where $\tld{w}_j\in \tld{W}_1$. Hence it suffices to show $\tld{M}^{\nv}_\cJ(\leq \eta,\nabla_{\textbf{a}_\tau})_\F$ is generically reduced along such components. 

By \cite[Proposition 4.5.1]{MLM}, $\tld{M}^{\nv}_\cJ(\leq \eta,\nabla_{\textbf{a}_\tau})_\F$ is a torus torsor over $M^{\nv}_\cJ(\leq \eta,\nabla_{\textbf{a}_\tau})_\F$, and we need to show the latter is generically reduced along the irreducible components denoted by
\[\prod_{\cJ} C_{(\tld{w}_j,t_{\mu_j}s_j\tld{w}_h^{-1}\tld{w}_j)}\]
in the notation of \cite[Proposition 4.3.10]{MLM} (and taking products for $\cJ$). By definition of the latter, we see that such components are the closures of the underlying reduced scheme of 
\[M^{\nv}_\cJ(\leq \eta,\nabla_{\textbf{a}_\tau})_\F\cap \prod_{\cJ} S^0_{\F}((\tld{w}_jw_0\tld{w}_h^{-1}\tld{w}_j)^*).\]
Since $\tld{w}_jw_0\tld{w}_h^{-1}\tld{w}_j$ is of the form $t_{w_j(\eta)}$ for some $w_j\in W$, as explained at the beginning of \cite[\S 4.2]{MLM}, \cite[Theorem 9.3]{PZ} shows that the scheme theoretic intersection $M^{\nv}_\cJ(\leq \eta,\nabla_{\textbf{a}_\tau})_\F\cap \prod_{\cJ} S^0_{\F}((\tld{w}_jw_0\tld{w}_h^{-1}\tld{w}_j)^*)$ is open in $M^{\nv}_\cJ(\leq \eta,\nabla_{\textbf{a}_\tau})_\F$. Finally, (the product over $\cJ$ version) of \cite[Theorem 4.2.4]{MLM} shows that this scheme theoretic intersection is an affine space, and in particular is reduced.

\end{enumerate}
\end{proof}

Recall that we say that a complete local Noetherian $\cO$-algebra $R$ is \emph{geometrically integral} if for any finite extension $E'$ of $E$ with ring of integers $\cO'$, $R \otimes_{\cO}\cO'$ is an integral domain. 
We will require the following commutative algebra result. 

\begin{lemma}\label{lemma:Opt}
Let $R$ be a complete local Noetherian $\cO$-algebra such that 
\begin{itemize}
\item $R$ is an integral domain; and
\item there is a section $s: R \ra \cO$ such that the corresponding $E$-point of $\Spec R[1/p]$ is geometrically unibranch. 
\end{itemize} 
Then $R$ is geometrically integral. 
\end{lemma}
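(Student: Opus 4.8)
The plan is to reduce the assertion to geometric integrality of the generic fibre $R[1/p]$, and then to extract that from the geometrically unibranch point by controlling the algebraic closure of $E$ in $\operatorname{Frac}(R)$. First I would record the elementary reductions. Set $\fp := \ker(s)$, a prime of $R$ with $R/\fp \cong \cO$; since $p$ is invertible in $\cO$ we have $p \notin \fp$, so $p \neq 0$ in $R$, and as $R$ is a domain it is $p$-torsion free, whence $R \hookrightarrow R[1/p]$. For a finite extension $E'/E$ with ring of integers $\cO'$, the ring $\cO'$ is finite free over $\cO$, so $R \otimes_{\cO} \cO'$ is flat over $R$, $p$ remains a non-zero-divisor on it, and therefore $R \otimes_{\cO} \cO' \hookrightarrow (R \otimes_{\cO} \cO')[1/p] = R[1/p] \otimes_E E'$. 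Since a subring of an integral domain is an integral domain, it suffices to prove that $R[1/p]$ is geometrically integral over $E$.

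Next, $R[1/p]$ is a localization of the domain $R$, hence an integral domain with fraction field $L := \operatorname{Frac}(R)$, and it is geometrically reduced over $E$ because $E$ has characteristic $0$ and is thus perfect. Hence $R[1/p]$ is geometrically integral over $E$ if and only if $L \otimes_E \overline{E}$ is an integral domain (I use here that $R[1/p] \otimes_E \overline{E}$ embeds into the localization $L \otimes_E \overline{E}$); equivalently, $L/E$ is a regular field extension, which in characteristic $0$ means precisely that $E$ is algebraically closed in $L$. So the task is reduced to showing that the algebraic closure $E_1$ of $E$ in $L$ equals $E$.

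For this I would pass to the normalization $\tld{R}$ of $R$ in $L$. It is module-finite over $R$ because $R$, being a complete Noetherian local ring, is excellent (hence Nagata); and since $R$ is henselian and $\tld{R}$ is a finite $R$-algebra that is a domain, $\tld{R}$ is local. The integral closure $\cO_{E_1}$ of $\cO$ in $E_1$ consists of elements of $L$ integral over $\cO \subseteq R$, hence lies in $\tld{R}$. Because $p \notin \fp$, the local ring of $\Spec R[1/p]$ at the point cut out by $s$ is $R_{\fp}$, with residue field $\kappa(\fp) = E$; the geometrically unibranch hypothesis at this point, together with the facts that $R_{\fp}$ is a domain and $E$ is perfect, says exactly (via the standard characterization of geometrically unibranch local rings in terms of their normalization, together with the compatibility of normalization with localization) that there is a \emph{unique} prime $\tld{\fp} \subset \tld{R}$ over $\fp$ and that $\operatorname{Frac}(\tld{R}/\tld{\fp}) = E$ (the residue field extension of the normalization is purely inseparable over $\kappa(\fp)=E$, hence trivial in characteristic $0$). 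Finally, $\tld{\fp} \cap \cO_{E_1}$ is a prime of $\cO_{E_1}$ lying over $\fp \cap \cO = (0)$ (the latter because $s$ is a section of $\cO \to R$, so restricts to the identity on $\cO$), so by incomparability for the integral extension $\cO \subseteq \cO_{E_1}$ it equals $(0)$; therefore $\cO_{E_1}$ embeds into $\tld{R}/\tld{\fp}$, and passing to fraction fields yields an $E$-algebra embedding $E_1 \hookrightarrow \operatorname{Frac}(\tld{R}/\tld{\fp}) = E$, which forces $E_1 = E$ and completes the proof.

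The part that needs care is the translation of ``geometrically unibranch'' into the two concrete properties of $\tld{R}$ used above (one should cite, e.g., EGA IV$_4$, 18.6.12, or the corresponding statement in the Stacks project): uniqueness of the prime over $\fp$ is the ``unibranch'' content, but one crucially also needs the residue field of $\tld{R}$ there to coincide with $\kappa(\fp)$, which is where the \emph{geometric} strengthening of unibranch and the characteristic-$0$ hypothesis enter. Everything else in the argument is formal manipulation of flatness, localization, and integral extensions.
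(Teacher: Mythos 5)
Your proof is correct, but it takes a genuinely different route from the paper. The paper embeds $R$ into the completion $\widehat{R[1/p]}$ of $R[1/p]$ at the $E$-point $\ker(s)$, first showing the map is injective via a symbolic-power/Krull-intersection argument ($\bigcap_n \fp^{(n)} = 0$), then tensoring with $\cO'$ and invoking the fact that the completion of an excellent Noetherian local ring at a geometrically unibranch point is a domain (Stacks, Tag 0C2E). You instead observe that, since $\Char E = 0$, geometric integrality of $R\otimes_\cO \cO'$ for all $\cO'$ reduces to $E$ being algebraically closed in $L := \operatorname{Frac}(R)$, and you deduce that field-theoretic statement directly from the unibranch hypothesis via the normalization $\tilde R$ of $R$: incomparability for the integral extension $\cO \subset \cO_{E_1}$ forces $\cO_{E_1}$ to inject into $\tilde R/\tilde\fp$, whose fraction field is $E$ by the (geometric) unibranch condition at $\ker(s)$. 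Your approach is arguably more elementary and transparent about where characteristic $0$ enters; in fact, on close inspection it never uses that $\tilde R$ is module-finite over $R$ or that $\tilde R$ is local (incomparability and lying-over hold for arbitrary integral extensions, and the unibranch condition is phrased for $\tilde R_\fp$), so the appeals to excellence and Henselianness you make are harmless but not load-bearing. The paper's approach, by contrast, is shorter to write because it offloads the hard content onto a single citation about completions.
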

\begin{proof}

For a prime ideal $\mathfrak{q} \subset R$ and $n\in \N$, let $\mathfrak{q}^{(n)} \subset R$ denote the symbolic power $(\mathfrak{q}^n R_{\mathfrak{q}})\cap R$. 
Let $\fp$ denote the kernel of $s$. 
By \cite[Theorem 23(3), Chapter IV \S 12]{ZS}, $\bigcap_{n\in \N}\fp^{(n)} = 0$. 
%If $\fm \subset R$ denotes the maximal ideal, then we have 
%\[
%\bigcap_{n\in \N}\fp^{(n)} = \bigcap_{n\in \N}\fp^n \subset \bigcap_{n\in \N}\fm^n = 0,
%\] 
%where the first equality follows by a theorem of Chevalley \cite[Corollary 5 of Theorem 13, Chapter VIII \S 5]{ZS} (note that the only prime ideals of $R$ containing $\fp$ are $\fp$ and $\fm$ and that $\fm^{(n)} = \fm^n$ and $\fp^{(n)} \subset \fm^n$ so that $R$ is complete with respect to both $(\fp^{(n)})_{n\in \N}$ and $(\fm^{(n)})_{n\in \N}$) and the last equality follows from the completeness of $R$. 
Letting $\tld{\fp^n} = (\fp^n R[1/p])\cap R \subset \fp^{(n)}$, we see that $\bigcap_{n\in \N} \tld{\fp^n} = 0$ so that the natural map from $R$ to the completion $\widehat{R[1/p]}$ of $R[1/p]$ at the kernel of the section $R[1/p] \ra E$ induced by $s$ is injective. 

Let $E'/E$ be a finite extension with ring of integers $\cO'$. 
Then the injection from the previous paragraph induces an injection $\psi: R \otimes_{\cO} \cO' \into \widehat{R[1/p]} \otimes_E E'$. 
The codomain of $\psi$ is the completion of $R[1/p] \otimes_E E'$ at the kernel of the section $R[1/p] \otimes_E E' \ra E'$ induced by $s$. 
As $R[1/p] \otimes_E E'$ is Noetherian and excellent and this $E'$-point of $\Spec R[1/p] \otimes_E E'$ is unibranch, the codomain of $\psi$ is an integral domain \cite[\href{https://stacks.math.columbia.edu/tag/0C2E}{Tag 0C2E}]{stacks-project}. 
Thus $R \otimes_{\cO} \cO'$ is an integral domain. 
\end{proof}

The following theorem follows from the preceding lemma and results of \cite{MLM}. 

\begin{thm}\label{thm:geomintegral}
There is a polynomial $P(x_1,\ldots,x_n) \in \Z[x_1,\ldots,x_n]$, independent of $p$, such that if 
\begin{itemize}
\item $(s,\mu-\eta)$ is a lowest alcove presentation for $\tau$ and $\mu$ is $P$-generic, i.e.~for all $j\in \Hom_{\Q_p\mathrm{-alg}}(K,E)$, $p\nmid P(\mu_{j,1},\ldots,\mu_{j,n})$; and 
\item $\rhobar: G_K\ra \GL_n(\F)$ is a tame Galois representation, 
\end{itemize}
then $R_\rhobar^\tau\defeq R_\rhobar^{\eta,\tau}$ is geometrically integral or is $0$. 
\end{thm}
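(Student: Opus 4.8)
The plan is to deduce the theorem from Lemma~\ref{lemma:Opt} applied to $R = R_\rhobar^{\eta,\tau}$, so the two things I need to supply are (i) integrality of $R_\rhobar^{\eta,\tau}$ itself, and (ii) the existence of an $\cO$-point whose associated $E$-point of $\Spec R_\rhobar^{\eta,\tau}[1/p]$ is geometrically unibranch. Both should follow from the local model theory of \cite{MLM} once $\mu$ is taken generic enough; the role of the universal polynomial $P$ is exactly to package the (finitely many, $p$-independent) genericity congruences needed for those inputs, together with the congruences needed so that $\tau$ is, say, $(2h_\eta+2)$-deep (or whatever depth the cited results of \cite{MLM} demand). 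So the first step is bookkeeping: collect from \cite{MLM} the finite list of polynomial non-vanishing conditions on $(\mu_{j,1},\dots,\mu_{j,n})$ under which (a) $\tau = \tau(s,\mu)$ is tame and sufficiently deep, and (b) the naive local model $\tld M^{\nv}_\cJ(\leq\eta,\nabla_{\mathbf a_\tau})$ computes $\cX^{\eta,\tau}$ via the diagram \cite[(7.17)]{MLM}; take $P$ to be the product of the corresponding polynomials.

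Next I would recall the local model picture: by \cite[Proposition 4.5.1]{MLM} a smooth cover of a versal ring of $\cX^{\eta,\tau}$ at $\rhobar$ is, up to smooth factors, the completion of $M^{\nv}_\cJ(\leq\eta,\nabla_{\mathbf a_\tau})$ at the point determined by $\rhobar$. Here I expect that, for $\rhobar$ tame, the relevant point lies in a single explicitly understood Schubert cell, and the genericity built into $P$ together with \cite[Theorem 4.2.4]{MLM} (the affine-space description of the relevant scheme-theoretic intersections) or \cite[Theorem 9.3]{PZ} forces the local ring there to be irreducible; combined with flatness/reducedness statements already invoked in the proof of Proposition~\ref{prop:topological BM}, this gives that $R_\rhobar^{\eta,\tau}$ is an integral domain (or zero, the latter happening exactly when $\rhobar$ does not lie on $\cX^{\eta,\tau}$). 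This is essentially the unibranch-at-tame-$\rhobar$ input from \cite{MLM} that was cited in the introduction.

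For the unibranch $E$-point: pick a Hodge type $\eta$ potentially crystalline lift of $\rhobar$ of type $\tau$ whose reduction is suitably generic — for instance one produced from an obvious weight $\sigma \in W_\obv(\rhobar|_{I_{\Q_p}}) \cap \JH(\ovl R(t_\mu s))$ via Proposition~\ref{prop:wtintersect} and Proposition~\ref{prop:lower bound of stacks}, so that the corresponding closed point of $\cX^{\eta,\tau}_\red$ lies on a \emph{single} component $\cC_\sigma$ along which $\cX^{\eta,\tau}_\F$ is generically reduced (Proposition~\ref{prop:topological BM}(2)). After possibly enlarging $E$ this gives a section $s\colon R_\rhobar^{\eta,\tau}\to\cO$, and generic reducedness plus the fact that the point lies on only one component translates, on the generic fiber, into the $E$-point of $\Spec R_\rhobar^{\eta,\tau}[1/p]$ being regular, hence geometrically unibranch. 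Then Lemma~\ref{lemma:Opt} applies verbatim and yields geometric integrality. The main obstacle I anticipate is the second step — verifying that the tame $\rhobar$ actually sits in a Schubert stratum where the local model is unibranch (equivalently, irreducible), and pinning down precisely which polynomial congruences on $\mu$ make this work uniformly in $p$; everything else is either citation or the formal deduction via Lemma~\ref{lemma:Opt}.
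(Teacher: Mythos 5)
Your overall architecture — establish integrality, produce an $\cO$-point with a geometrically unibranch $E$-point on the generic fiber, and feed both into Lemma~\ref{lemma:Opt} — is exactly the paper's, and the role you assign to $P$ (packaging the finitely many $p$-independent congruences from \cite{MLM}) is correct. However, the two inputs to Lemma~\ref{lemma:Opt} are both mishandled.

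For integrality, the paper simply cites \cite[Theorem 7.3.2]{MLM}; your plan to re-derive it from \cite[Proposition 4.5.1]{MLM} and \cite[Theorem 4.2.4]{MLM} is a plausible but unnecessary detour, and as written it is too vague to carry out (in particular the assertion that a tame $\rhobar$ ``lies in a single explicitly understood Schubert cell, and \dots genericity forces the local ring there to be irreducible'' is precisely the nontrivial content of \cite[Theorem 7.3.2]{MLM} and cannot be dispatched by the citations you list).

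The more serious gap is in producing the $\cO$-point and arguing unibranch-ness. First, Propositions~\ref{prop:wtintersect} and \ref{prop:lower bound of stacks} show (respectively) that $\rhobar$ lies on $\cX^{\eta,\tau}$ and that various $\cC_\sigma$ are contained in $\cX^{\eta,\tau}$; neither produces a lift of the \emph{given} $\rhobar$ to $\cO$, which is what is needed. The correct input is the last paragraph of the proof of \cite[Theorem 3.7.1]{MLM}, where an $\cO$-point of $R_\rhobar^\tau$ is constructed. Second, you cannot ``pick a lift whose reduction is suitably generic'': the reduction of any lift of $\rhobar$ is $\rhobar$ itself, so there is no freedom here, and the demand that the closed point lie on a single component $\cC_\sigma$ is a property of $\rhobar$ that you cannot arrange. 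Third, your deduction of regularity/unibranch-ness of the $E$-point ``from generic reducedness plus the fact that the point lies on only one component'' conflates the special and generic fibers: regularity at an $E$-point is a generic-fiber statement and is simply not controlled by the special-fiber geometry you invoke. In fact none of this machinery is needed — the generic fiber $R_\rhobar^\tau[1/p]$ of a potentially semistable deformation ring is regular by Kisin's results, independent of any genericity on $\tau$, hence every $E$-point is automatically geometrically unibranch, which is exactly the (very short) observation the paper makes before invoking Lemma~\ref{lemma:Opt}.
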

\begin{proof}
By Theorem \cite[Theorem 7.3.2]{MLM} the conclusion of theorem holds except possibly for the statement that we can assure $R_\rhobar^\tau$ is geometrically integral rather than just integral. 
However, the proof of \cite[Theorem 7.3.2]{MLM}, which refers to \cite[Theorem 3.7.1]{MLM} actually guarantees that $R_\rhobar^\tau$ has an $\cO$-point (see the last paragraph of the proof of \cite[Theorem 3.7.1]{MLM}). Since $R_\rhobar^\tau[\frac{1}{p}]$ is regular and hence geometrically unibranch at all its $E$-points, we conclude by Lemma \ref{lemma:Opt}. 
\end{proof}

\begin{rmk} When $\cO_p$ is a general \'{e}tale $\Zp$-algebra, an $L$-homomorphism over $\F$ corresponds to a tuple $(\rhobar_v)_{v}$ of Galois representations $\rho_v: G_{F_v}\rightarrow \GL_n(\ovl{\F})$. Hence all the results in this section apply verbatim in this more general setting, by taking the products over ($\cO$) along $S_p$ in the proofs.
\end{rmk}

\subsection{Connecting types}
The following result is the key combinatorial ingredient for our method. 
\begin{prop}\label{prop:obvweight}
Let $\alpha$ be a simple root with $h_{\omega_\alpha} = 1$. 
Let $\tld{s},\tld{w} \in \tld{W}$ such that $\tld{s}(0)-\eta$ and $\tld{w}(0)-\eta$ are $h_\eta$-deep in $C_0$. 
Suppose further that $\tld{w}^{-1}\tld{s} = \tld{w}_2^{-1}s_\alpha w_0 \tld{w}_1$ for some $\tld{w}_2 \in \tld{W}_1$ and $\tld{w}_1 \in \tld{W}^+$ with $\tld{w}_1 \uparrow \tld{w}_h^{-1} \tld{w}_2$.
Then $W^?(\taubar(\tld{s})) \cap \JH(\ovl{R}(\tld{w}))$ contains the outer weights of $\JH(\ovl{R}(\tld{w}))$ corresponding to $w_0 w_2$ and $w_0 s_\alpha w_2$, respectively (here $w_2$ is the image of $\tld{w}_2$ in $W$). 
\end{prop}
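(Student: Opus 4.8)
The plan is to reduce the statement to the combinatorial characterizations of membership in $W^?$ and in $\JH(\ovl{R})$ assembled earlier, namely Proposition \ref{prop:combinatorial membership of DL}, Proposition \ref{prop:characteriztion of Herzig set}, Definition \ref{def:outer weights} and Remark \ref{rmk:outer}. The key point is that for the two candidate outer weights we must produce \emph{lowest alcove presentations} $(\tld{v},\omega)$ with $\tld{v}\in \tld{W}_1$, check that they are genuine outer Jordan--H\"older factors of $\ovl{R}(\tld{w})$ (the simple condition $t_\omega\in \tld{w}\,(\tld{w}_h\tld{v})^{-1}W$ of Remark \ref{rmk:outer}(2)), and then verify the $W^?$-membership criterion of Proposition \ref{prop:characteriztion of Herzig set}, i.e.~that $\tld{s}\in t_\omega\tld{W}_{\leq w_0\tld{v}}$.

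\emph{First} I would treat the outer weight corresponding to $w_0w_2$. Writing $\tld{w}^{-1}\tld{s}=\tld{w}_2^{-1}s_\alpha w_0\tld{w}_1$, one has $\tld{s}=\tld{w}\tld{w}_2^{-1}s_\alpha w_0\tld{w}_1$. The natural presentation to use here has $\tld{v}=\tld{w}_2$ (which lies in $\tld{W}_1$ by hypothesis); the outer weight of $\ovl{R}(\tld{w})$ attached to $w_0w_2\in W$ is $F_{(\tld{w}_2,\,\omega)}$ with $\omega$ determined by $t_\omega=\tld{w}\,(\tld{w}_h\tld{w}_2)^{-1}(w_0w_2)w_0\,(w_0w_2)^{-1}$ up to the usual clean-up — concretely $\omega$ is chosen so that $t_\omega\in \tld{w}(\tld{w}_h\tld{w}_2)^{-1}W$, which is exactly the outer-weight condition. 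Then I must show $\tld{s}\in t_\omega\tld{W}_{\leq w_0\tld{w}_2}$. Using Lemma \ref{lemma:reduced1}, the factorization $\tld{w}_2^{-1}(s_\alpha w_0)\tld{w}_1$ is reduced, and by \cite[Theorem 4.3]{Wang} the hypothesis $\tld{w}_1\uparrow\tld{w}_h^{-1}\tld{w}_2$ gives $\tld{w}_1\leq \tld{w}_h^{-1}\tld{w}_2$; combined with the reducedness this yields $\tld{w}_2^{-1}s_\alpha w_0\tld{w}_1\leq \tld{w}_2^{-1}s_\alpha w_0\tld{w}_h^{-1}\tld{w}_2$. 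A short computation as in the middle bullet of Lemma \ref{lem:subregular shape bound} rewrites the right-hand side in the form $t_{\omega'}\tld{u}$ with $\tld{u}\leq w_0\tld{w}_2$, and pre-multiplying by $\tld{w}$ and absorbing translations identifies $\tld{s}$ with an element of $t_\omega\tld{W}_{\leq w_0\tld{w}_2}$. Proposition \ref{prop:characteriztion of Herzig set} then gives $F_{(\tld{w}_2,\omega)}\in W^?(\taubar(\tld{s}))$, and it is in $\JH(\ovl{R}(\tld{w}))$ as an outer factor by construction.

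\emph{Second}, for the outer weight corresponding to $w_0s_\alpha w_2$ I would use the presentation coming from $(s_\alpha\tld{w}_2)^\diamond=t_\omega s_\alpha\tld{w}_2\in\tld{W}_1$, whose existence is guaranteed since the restricted alcoves tile a fundamental domain. Here the hypothesis $h_{\omega_\alpha}=1$ is essential: it is precisely what makes Lemma \ref{lemma:reduced2} applicable, giving the reduced factorization $(s_\alpha\tld{w}_2)^{\diamond,-1}\bigl((s_\alpha\tld{w}_2)^\diamond\tld{w}_2^{-1}s_\alpha w_0\bigr)$ of $\tld{w}_2^{-1}s_\alpha w_0$, and then Lemma \ref{lem:subregular shape bound} (whose hypothesis $\langle\omega_\alpha,\beta^\vee\rangle\leq 1$ for all roots $\beta$ follows from $h_{\omega_\alpha}=1$) gives the Bruhat bound
\[
(s_\alpha\tld{w}_2)^\diamond\tld{w}_2^{-1}s_\alpha w_0\tld{w}_1\ \leq\ w_0\tld{w}_h^{-1}(s_\alpha\tld{w}_2)^\diamond.
\]
Translating through $\tld{w}$ as before, this says exactly that $\tld{s}\in t_{\omega''}\tld{W}_{\leq w_0 (s_\alpha\tld{w}_2)^\diamond}$ for the appropriate $\omega''$, so Proposition \ref{prop:characteriztion of Herzig set} places $F_{((s_\alpha\tld{w}_2)^\diamond,\omega'')}$ in $W^?(\taubar(\tld{s}))$; since the image of $(s_\alpha\tld{w}_2)^\diamond$ in $W$ is $s_\alpha w_2$ and $W^?=\mathcal{R}(\JH(\ovl{R}))$ with $\mathcal{R}$ swapping $\tld{v}\leftrightarrow\tld{w}_h\tld{v}$, this identifies the weight as the outer factor of $\ovl{R}(\tld{w})$ attached to $w_0 s_\alpha w_2$ via Remark \ref{rmk:outer}(3). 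Finally one checks it really is an \emph{outer} factor, i.e.~$t_{\omega''}\in\tld{w}(\tld{w}_h(s_\alpha\tld{w}_2)^\diamond)^{-1}W$; this is the content of the identity $s_\alpha(w_0\eta+\omega)=w_0\eta+\omega$ from the middle bullet of Lemma \ref{lem:subregular shape bound}, which forces the two translation parts to match.

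\emph{The main obstacle} I anticipate is purely bookkeeping: matching the various lowest alcove presentations (including the $\eta$-shift convention flagged after the definition of Deligne--Lusztig presentations, and the ambiguity up to $\Omega$ exploited in Remark \ref{rmk:outer}(1)) so that the two weights produced are genuinely the outer factors ``corresponding to $w_0w_2$ and $w_0 s_\alpha w_2$'' in the precise sense of Definition \ref{def:outer weights}, rather than merely being \emph{some} Jordan--H\"older factors in $W^?(\taubar(\tld{s}))$. In particular one must be careful that the $\omega$ appearing in $(s_\alpha\tld{w}_2)^\diamond=t_\omega s_\alpha\tld{w}_2$ is the \emph{same} $\omega$ as in Lemmas \ref{lemma:omega}, \ref{lemma:reduced2}, \ref{lem:subregular shape bound}, and that the reducedness statements there are invoked with the correct factor on the correct side; once the conventions are pinned down, everything reduces to the two Bruhat inequalities already proved in Lemmas \ref{lemma:reduced1} and \ref{lem:subregular shape bound}.
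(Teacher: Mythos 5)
There is a genuine gap, and it is an off-by-$w_0$ error in the identification of outer Jordan--H\"older factors with elements of $W$ that propagates through both cases. By Definition \ref{def:outer weights}, the outer Jordan--H\"older factor of $\ovl{R}$ \emph{corresponding to $w\in W$} is $F_{(w^\diamond,\omega)}$ where $w^\diamond\in\tld{W}_1$ has image $w$. Thus the outer factor of $\ovl{R}(\tld{w})$ corresponding to $w_0w_2$ has lowest alcove presentation in $\tld{W}_1$ given by $(w_0w_2)^\diamond = \tld{w}_h^{-1}\tld{w}_2$ (this is the identification the paper makes explicit at the start of its proof), \emph{not} $\tld{w}_2$ as you claim; similarly the outer factor corresponding to $w_0 s_\alpha w_2$ has presentation element $\tld{w}_h^{-1}(s_\alpha\tld{w}_2)^\diamond$, \emph{not} $(s_\alpha\tld{w}_2)^\diamond$. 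Your presentations $F_{(\tld{w}_2,\omega)}$ and $F_{((s_\alpha\tld{w}_2)^\diamond,\omega'')}$ are the outer factors attached to $w_2$ and $s_\alpha w_2$, so even if the Bruhat arguments worked they would prove a different assertion than the one stated.

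The error also surfaces in how you read off $W^?$-membership from the Bruhat inequalities. From Lemma \ref{lem:subregular shape bound}, the inequality $(s_\alpha\tld{w}_2)^\diamond\tld{w}_2^{-1}s_\alpha w_0\tld{w}_1\leq w_0\tld{w}_h^{-1}(s_\alpha\tld{w}_2)^\diamond$ unwinds (via $\tld{w}^{-1}\tld{s}=\tld{w}_2^{-1}s_\alpha w_0\tld{w}_1$ and $W$-stability of the Bruhat interval) to $\tld{s}\in t_{\tld{w}(s_\alpha\tld{w}_2)^{\diamond,-1}(0)}\,\tld{W}_{\leq\, w_0\tld{w}_h^{-1}(s_\alpha\tld{w}_2)^\diamond}$, where the $\tld{w}_h^{-1}$ is essential; you drop it and conclude $\tld{s}\in t_{\omega''}\tld{W}_{\leq w_0(s_\alpha\tld{w}_2)^\diamond}$, which is a different condition (corresponding to a different Serre weight under Proposition \ref{prop:characteriztion of Herzig set}). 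The attempted fix via $\mathcal{R}$ does not help: $\mathcal{R}$ relates $W^?(\taubar(\tld{s}))$ to $\JH(\ovl{R}(\tld{s}))$, not to $\JH(\ovl{R}(\tld{w}))$, so it cannot re-label an outer factor of $\ovl{R}(\tld{w})$. For the first case the step ``pre-multiplying by $\tld{w}$ and absorbing translations'' is likewise untracked: one needs the precise form $w^\diamond\tld{w}^{-1}\tld{s}\leq w_0\tld{w}_h^{-1}w^\diamond$ (with $w^\diamond=\tld{w}_2$, i.e.~$s_\alpha w_0\tld{w}_1\leq w_0\tld{w}_h^{-1}\tld{w}_2$), not the left-multiplied inequality you propose, and Bruhat order is not invariant under multiplication by an arbitrary element of $\tld{W}$. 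The correct route, as in the paper, is first to reduce the $W^?$-membership of the outer factor corresponding to $w_0w$ to the single inequality $w^\diamond\tld{w}^{-1}\tld{s}\leq w_0\tld{w}_h^{-1}w^\diamond$ using the $W$-stability of $\tld{W}_{\leq w_0\tld{w}_h^{-1}w^\diamond}$, and then verify it for $w\in\{w_2,s_\alpha w_2\}$, the second case being exactly Lemma \ref{lem:subregular shape bound}.
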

\begin{rmk}\label{rmk:connecting type} Let $\tld{s}$ be such that $\tld{s}(0)-\eta$ is $2h_\eta$-deep in $C_0$. Then given any $\sigma\in W^?(\ovl{\tau}(\tld{s}))$, we can find $\tld{w}$ such that $\sigma$ is the outer weight of $\JH(\ovl{R}(\tld{w}))$ corresponding to $w_0w_2$ in Proposition \ref{prop:obvweight}.

Indeed, taking $\tld{u}$, $\tld{w}$, and $\omega$ in the proof of Proposition \ref{prop:characteriztion of Herzig set} to be $\tld{w}_1$, $\tld{w}_h^{-1}\tld{w}_2$, and $\omega$, respectively, we can find $\tld{w}_2\in \tld{W}_1$, $\omega\in X^*(T)$ and $\tld{w}_1\in \tld{W}^+$ such that
\begin{itemize}
\item $\sigma=F_{(\tld{w}_h^{-1}\tld{w}_2,\omega)}$; 
\item $\tld{w}_1 \uparrow \tld{w}_h^{-1}\tld{w}_2$; and
\item $t_\omega\in \tld{s}\tld{w}_1^{-1}W$.
\end{itemize}  
%In particular, $\sigma$ is $h_\eta$-deep. 
We now choose $\tld{w}$ such that $\tld{w}^{-1}\tld{s}=\tld{w}_2^{-1}s_\alpha w_0 \tld{w}_1$. Note that $\tld{w}(0)-\eta$ is $h_\eta$-deep in $C_0$ since $\tld{s}(0) - \eta$ is $2h_\eta$-deep in $C_0$ and $\tld{w}^{-1}\tld{s}=\tld{w}_2^{-1}s_\alpha w_0 \tld{w}_1 \in \Adm(\eta)$, and  
\[t_{\omega}\in \tld{w}\tld{w}_2^{-1}s_\alpha w_0W= \tld{w}\tld{w}_2^{-1}W\]
so that $\sigma$ is the outer weight corresponding to $w_0w_2$. 
\end{rmk}

\begin{proof}
Let $w\in W$. 
The outer Jordan--H\"older factor of $R(\tld{w})$ corresponding to $w_0w$ is
$F_{(\tld{w}_h^{-1}w^\diamond,\tld{w} (w^\diamond)^{-1}(0))}$ (taking $w^\diamond$, $s$, and $\omega$ in Definition \ref{def:outer weights} to be $\tld{w}_h^{-1}w^\diamond = (w_0w)^\diamond$, the image of $\tld{w}$ in $W$, and $\tld{w}(w^\diamond)^{-1}(0)$, respectively. 
By Proposition \ref{prop:characteriztion of Herzig set}, this Serre weight belongs to $W^?(\taubar(\tld{s}))$ if and only if 
\[\tld{s}\in t_{\tld{w} (w^\diamond)^{-1}(0)}\tld{W}_{\leq w_0\tld{w}_h^{-1}w^\diamond}=\tld{w}(w^\diamond)^{-1} \tld{W}_{\leq w_0\tld{w}_h^{-1}w^\diamond},\]
or equivalently
\[w^\diamond \tld{w}^{-1}\tld{s}\leq w_0\tld{w}_h^{-1}w^\diamond. \]

Thus we need to check $w^\diamond \tld{w}_2^{-1}s_\alpha w_0 \tld{w}_1 \leq w_0\tld{w}_h^{-1}w^\diamond$ for $w\in \{w_2,s_\alpha w_2\}$. 
\begin{itemize}
\item For $w=w_2$, we can choose $w^\diamond=\tld{w}_2$. Then
\[w^\diamond \tld{w}_2^{-1}s_\alpha w_0 \tld{w}_1= s_\alpha w_0 \tld{w}_1 \leq w_0 \tld{w}_h^{-1} \tld{w}_2,\]
where the inequality follows from the fact that $\tld{w}_1\leq \tld{w}_h^{-1} \tld{w}_2$ by \cite[Theorem 4.1.1]{LLL} and that $w_0 (\tld{w}_h^{-1} \tld{w}_2)$ is a reduced factorization (see \cite[Lemma 4.1.9]{LLL}). 
\item For $w=s_\alpha w_2$, we can take $w^\diamond=(s_\alpha \tld{w}_2)^\diamond$, and the desired inequality is the content of Lemma \ref{lem:subregular shape bound}.
\end{itemize}
\end{proof}

\begin{defn}(Connected pairs of weights)\label{defn:connected weights} 
Let $\tld{s}$ be such that $\tld{s}(0)-\eta$ is $h_\eta$-deep in $C_0$, and let $\sigma,\sigma'\in W^?(\ovl{\tau}(\tld{s}))$. 
We say that $\sigma, \sigma'$ are \emph{connected} (relative to $\ovl{\tau}(\tld{s})$) if there exists a Deligne--Lusztig representation $R(\tld{w})$ and a simple root $\alpha$ satisfying the conditions of Proposition \ref{prop:obvweight}, such that $\sigma$, $\sigma'$ are the two prescribed outer weights of $\JH(\ovl{R}(\tld{w}))$. In this situation, we also say $R(\tld{w})$ connects $\sigma, \sigma'$ relative to $\ovl{\tau}(\tld{s})$.
\end{defn}

\section{Patching functors}

\subsection{The axioms}\label{sec:axioms}
We use the setup of \S \ref{subsec:L parameters}.
Thus $\cO_p$ is a finite \'etale $\Z_p$-algebra and $F_p=\cO_p \otimes_{\Z_p} \Q_p = \prod_{v\in S_p} F_v$. 
We assume that $E$ contains the image of any homomorphism $F_p \rightarrow \ovl{\Q}_p$. 
Recall that $G_0=\Res_{\cO_p/\Z_p} \GL_n$ and $\Gamma=G_0(\F_p)$. In particular $h_\eta=n-1$.

We have the dual group $G^\vee\defeq\prod_{F_p\ra E}\GL_n$ as split reductive groups over $\cO$ and the $L$-group ${}^L G\defeq G ^\vee\rtimes\Gal(E/\Qp)$ of $G_0\otimes_{\Zp} \Qp$ (where $\Gal(E/\Qp)$ acts on the set $\{F_p\ra E\}$ by post-composition).
Thus an $L$-homomorphism $\rhobar$ over $\F$ is equivalent to a tuple $(\rhobar_v)_{v\in S_p}$ of continuous representations $G_{F_v} \ra \GL_n(\F)$; while an inertial $L$-parameter $\tau$ is equivalent to a tuple $(\tau_v)_{v\in S_p}$ of inertial types i.e.~homomorphisms $\tau_v: I_{F_v} \ra \GL_n(E)$ with open kernel which admit extensions to the Weil group $W_{F_v}$. We have similar notions when replacing $E$ by $\F$.

Let $\rhobar$ be an $L$-homomorphism over $\F$. 
For an inertial $L$-parameter $\tau$, let 
\[
R_\rhobar^\tau \defeq \underset{v\in S_p,\cO}{\widehat{\bigotimes}} R_{\rhobar_v}^{\tau_v}
\]
where $R_{\rhobar_v}^{\tau_v}$ is the quotient of the universal lifting ring $R_{\rhobar_v}^\Box$ parametrizing potentially semistable lifts of Hodge--Tate weight $\eta$ and inertial type $\tau$. 
(In our applications, such potentially semistable lifts will be potentially crystalline.) 
For each $v\in S_p$, let $\Spec R_{\rhobar_v}^t$ be the reduced union of $\Spec R_{\rhobar_v}^{\tau_v} \subset \Spec R_{\rhobar_v}^\Box$ over all tame inertial types $\tau_v$. 
We set
\[
R_\rhobar^t \defeq \underset{v\in S_p,\cO}{\widehat{\bigotimes}} R_{\rhobar_v}^t.
\]
Let $R^p$ be an $\cO$-flat equidimensional complete Noetherian local $\cO$-algebra, and let $R_\infty\defeq R^p \widehat{\otimes}_{\cO} R_\rhobar^t$. 
We have the quotients
\[
R_\infty(\tau)\defeq R_\infty \widehat{\otimes}_{R_\rhobar^t}R_\rhobar^\tau. 
\]

Assume that we have the following: 
\begin{enumerate}
\item an $\cO$-algebra $S_\infty$; 
\item a perfect complex $C_\infty$ of $S_\infty[\Gamma]$-modules;
\item an $S_\infty$-algebra $\bT_\infty \subset \End_{S_\infty[\Gamma]}(C_\infty)$; and
\item \label{item:nilpotent} a surjection $R_\infty \onto \bT_\infty/I_\infty$ where $I_\infty \subset \bT_\infty$ is a nilpotent ideal. 
\end{enumerate}
For a complex of $\cO[\Gamma]$-modules $V_\bullet$, set $C_\infty(V) \defeq C_\infty \otimes^{\bL}_{\cO[\Gamma]} V_\bullet$. When $V_\bullet=V$ is a finite $\cO[\Gamma]$-module, the second item implies that $C_\infty(V)$ is a perfect complex of $S_\infty$-modules.

We assume this setup satisfies the following local-global compatibility axiom:
\begin{ax} \label{LGC} 
For any tame inertial $L$-parameter $\tau=\tau(t_\mu s)$ and any $\cO[\Gamma]$-lattice $R(t_\mu s)^\circ \subset R(t_\mu s)$ 
\[\supp_{R_\infty} H^*(C_\infty(R(t_\mu s)^\circ) \subset R_\infty(\tau).\] 
\noindent Note that we can make sense of $\supp_{R_\infty} H^*(C_\infty(V))$ even though $H^*(C_\infty(V))$ is not an $R_\infty$-module since $\Spec \bT_\infty$ (which is homeomorphic to $\Spec \bT_\infty/I_\infty$) is naturally a (closed) subset of $\Spec R_\infty$ by \eqref{item:nilpotent}. 
\end{ax}

\begin{defn}
Let $W_{C_\infty}(\rhobar)$ be the set of Serre weights $\sigma$ such that $H^d(C_\infty(\sigma)) \neq 0$ for some integer $d$. 
\end{defn}

Our first result is the generalization of \cite[Theorem 6.1]{MLM} to this context:
\begin{thm}[Weight elimination] \label{thm:WE}
If $\rhobar$ is $(2n+1)$-generic, then $W_{C_\infty}(\rhobar) \subset W^?(\rhobar^{\mathrm{ss}})$. 
\end{thm}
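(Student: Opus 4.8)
The plan is to argue by contradiction using the axiomatic local-global compatibility, combined with the combinatorial weight elimination of Lemma~\ref{lem:WE combinatorics} and an induction on cohomological degree. Suppose $\sigma \notin W^?(\rhobar^{\mathrm{ss}})$ but $H^d(C_\infty(\sigma)) \neq 0$ for some $d$. Write $\rhobar^{\mathrm{ss}}|_{I} \cong \taubar(\tld{w}(\taubar))$ for some presentation with $\tld{w}(\taubar)(0)-\eta$ sufficiently deep in $C_0$ (here the $(2n+1)$-genericity of $\rhobar$ furnishes the required depth, as $h_\eta = n-1$, so $\sigma$ in particular is $d_\sigma$-deep). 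By Lemma~\ref{lem:WE combinatorics} there is a $0$-generic Deligne--Lusztig representation $R$ with $\sigma \in \JH_{\out}(\ovl{R})$ and $\tld{w}(\taubar) \notin t_\nu s\,\Adm(\eta)$ for any $(s,\nu)$ presenting $R$ with $\nu-\eta\in C_0$. By Proposition~\ref{prop:wtintersect} (applied with $\tau$ the tame type attached to $R$, noting $R$ can be taken $n$-generic after enlarging genericity of $\rhobar$, or by invoking the contrapositive of the equivalence of \eqref{item:admissible} and \eqref{item:outintersect}), this says precisely that $W^?(\rhobar^{\mathrm{ss}}) \cap \JH_{\out}(\ovl{R}) = \emptyset$, hence $\rhobar|_{G_K} \notin \cX^{\eta,\tau}$.

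The key point is then to transfer this non-membership into vanishing of cohomology. Choose an $\cO[\Gamma]$-lattice $R(t_\mu s)^\circ$ in $R$ whose mod-$\varpi$ reduction has $\sigma$ as its cosocle (this exists since $\sigma$ lies in $\JH(\ovl{R})$, in fact as an \emph{outer}, hence multiplicity-one, factor, so one can pick the lattice with prescribed reduction behavior at the top). Then there is a surjection $\ovl{R(t_\mu s)^\circ} \onto \sigma$, giving a distinguished triangle whose third term $\ovl{K}$ has Jordan--H\"older factors among $\JH(\ovl{R}) \setminus \{\sigma\}$ together with repeats — more precisely, I would instead use the short exact sequences filtering $\ovl{R(t_\mu s)^\circ}$ by a line with quotient/sub $\sigma$, and run the long exact sequence of cohomology of $C_\infty(-)$. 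Axiom~\ref{LGC} gives $\supp_{R_\infty} H^*(C_\infty(R(t_\mu s)^\circ)) \subset R_\infty(\tau)$, and since $\rhobar \notin \cX^{\eta,\tau}$ we have $R_\rhobar^\tau = 0$, so $R_\infty(\tau) = 0$ and therefore $H^*(C_\infty(R(t_\mu s)^\circ)) = 0$ — equivalently, $C_\infty(R(t_\mu s)^\circ)$ is acyclic. Feeding this into the long exact sequence associated to $0 \to \sigma \to \ovl{R(t_\mu s)^\circ}/(\cdots) \to \cdots$ produces, for each $d$, an isomorphism $H^d(C_\infty(\sigma)) \cong H^{d\pm 1}(C_\infty(\sigma''))$ for suitable Jordan--H\"older factors $\sigma''$ of $\ovl{R}$ other than $\sigma$, i.e. a \emph{degree shift}: non-vanishing in degree $d$ for $\sigma$ forces non-vanishing in degree $d-1$ (say) for some $\sigma''$.

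The remaining step is to iterate and close the induction. The weights $\sigma''$ arising this way still need not lie in $W^?(\rhobar^{\mathrm{ss}})$, but they are controlled: using the knowledge of how $W^?(\rhobar^{\mathrm{ss}})$ interacts with $\JH(\ovl{R})$ (Proposition~\ref{prop:combinatorial membership of DL} and Proposition~\ref{prop:wtintersect}), each such $\sigma''$ is again $d_{\sigma''}$-deep and fails to lie in $W^?(\rhobar^{\mathrm{ss}})$ whenever $\sigma$ did — more carefully, one shows that if \emph{every} Jordan--H\"older factor of $\ovl{R}$ lay outside $W^?(\rhobar^{\mathrm{ss}})$ the whole argument reruns, whereas if some factor lies inside, the incompatibility $\rhobar \notin \cX^{\eta,\tau}$ contradicts Proposition~\ref{prop:wtintersect}\eqref{item:wtintersect}$\Rightarrow$\eqref{item:red}. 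Thus we stay in the regime where $C_\infty(R(t_\mu s)^\circ)$ is acyclic, and each step strictly decreases the cohomological degree in which some bad weight is supported. Since $C_\infty$ is a perfect complex, its cohomology is bounded, so $H^d(C_\infty(\tau'')) = 0$ for $d$ sufficiently small (or large) for all weights $\tau''$; iterating the degree shift past this bound yields $H^d(C_\infty(\sigma)) = 0$, the desired contradiction.

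The main obstacle I anticipate is the bookkeeping in the induction: one must ensure that the degree shift genuinely moves in a fixed direction and that the auxiliary weights $\sigma''$ remain within the combinatorial class to which Lemma~\ref{lem:WE combinatorics} and the acyclicity of $C_\infty(R(t_\mu s)^\circ)$ continue to apply, so that the process terminates at the boundary of the (bounded) amplitude of $C_\infty$ rather than looping. This is exactly the place where ``knowledge of the interaction of $W^?(\rhobar)$ with Deligne--Lusztig representations and induction on degree'' (as flagged in the introduction) does the real work, and where one must be careful that the lattice $R(t_\mu s)^\circ$ can be chosen uniformly enough that the long exact sequences chain together.
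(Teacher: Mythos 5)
Your proposal captures the core idea of the paper's proof — the combination of Lemma~\ref{lem:WE combinatorics}, Axiom~\ref{LGC} giving acyclicity of $C_\infty(R(t_\mu s)^\circ)$, and a degree-shifting induction — but it has a genuine gap at the very first step: you assert that ``$\sigma$ in particular is $d_\sigma$-deep'' follows from the $(2n+1)$-genericity of $\rhobar$. This is false. The genericity of $\rhobar$ is a constraint on $\rhobar$, not on the arbitrary Serre weight $\sigma$ for which $H^d(C_\infty(\sigma))\neq 0$; since $\sigma$ is assumed to lie \emph{outside} $W^?(\rhobar^{\mathrm{ss}})$, Proposition~\ref{prop:characteriztion of Herzig set} gives no depth information about it. (It would be perfectly consistent with the hypotheses for $\sigma$ to be the trivial representation, for instance.) Lemma~\ref{lem:WE combinatorics} explicitly hypothesizes that $\sigma$ is $d_\sigma$-deep, so you cannot invoke it unconditionally. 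The paper's proof devotes its first two (and longest) paragraphs precisely to the weights your proposal skips: the non-$p$-regular case, where $\sigma\in\JH(\ovl{R}_1(\lambda))$ and one appeals to \cite{enns}, and the $p$-regular-but-not-$d_\sigma$-deep case, which needs a separate construction from the proof of \cite[Theorem~6.1]{DLR}. In both cases the conclusion is not merely that $\sigma\notin W^?(\rhobar^{\mathrm{ss}})$ leads to trouble, but that the Deligne--Lusztig representation you produce is forced to have higher genericity than the shallow $\sigma$ permits, which is the actual contradiction. Only the third case (your entire proposal) can quote Lemma~\ref{lem:WE combinatorics} directly.

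Two smaller issues. First, you appeal to Proposition~\ref{prop:wtintersect} to conclude $\rhobar|_{G_K}\notin\cX^{\eta,\tau}$, but that proposition requires $\tau$ to be $n$-generic, and Lemma~\ref{lem:WE combinatorics} only hands you a $0$-generic $R$; the genericity is only upgraded \emph{after} the degree shift produces a deep $\kappa\in W^?(\rhobar^{\mathrm{ss}})\cap\JH(\ovl{R})$, via Lemma~\ref{lem:DL containing decent weight}. The paper instead quotes the relevant paragraph of \cite[Theorem~6.1]{DLR}, which works without the $n$-genericity of $\tau$, to deduce the absence of a potentially semistable lift. Your parenthetical ``after enlarging genericity of $\rhobar$'' is not available — $\rhobar$ is given. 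Second, the termination bookkeeping via ``iterate past the bounded amplitude of $C_\infty$'' can be replaced (and is, in the paper) by the cleaner device of taking $d$ minimal among all pairs $(d,\sigma)$ with $\sigma\notin W^?(\rhobar^{\mathrm{ss}})$ and $H^d(C_\infty(\sigma))\neq 0$, so that the degree-shifted $\kappa$ with $H^{d-1}(C_\infty(\kappa))\neq 0$ is forced into $W^?(\rhobar^{\mathrm{ss}})$ in one step, and the combinatorial contradiction is immediate. This avoids the concern you yourself flag about ``ensuring the degree shift genuinely moves in a fixed direction''.
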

\begin{proof} 
Let $d$ be minimal such that there exists $\sigma\defeq F(\lambda) \notin W^?(\rhobar^{\mathrm{ss}})$ and $H^d(C_\infty(\sigma)) \neq 0$. 
First suppose that $\sigma$ is not $p$-regular, i.e. $\sigma$ is not $0$-deep.
Then $\sigma \in \JH(\ovl{R}_1(\lambda))$ and $\rhobar^{\mathrm{ss}}$, and thus $\rhobar$, does not have a potentially semistable lift of type $(\eta,\tau(1,\lambda))$ by \cite[Proposition 7]{enns} and the proof of \cite[Theorem 8]{enns} (note that $\rhobar$ is $(2n+1)$-generic in the sense of \cite[Definition 2]{enns} by \cite[Remark 2.2.8]{LLL}). 
Thus Axiom \ref{LGC} implies that $C_\infty(R_1(\lambda)^\circ) = 0$ for any lattice $R_1(\lambda)^\circ \subset R_1(\lambda)$. 
Taking a lattice with reduction $\ovl{R}_1(\lambda)$ admitting an injection $\sigma \into \ovl{R}_1(\lambda)$, the exact sequence $0 \ra \sigma \ra \ovl{R}_1(\lambda) \ra \ovl{R}_1(\lambda)/\sigma \ra 0$ shows that $H^{d-1}(C_\infty(\ovl{R}_1(\lambda)/\sigma)) \neq 0$. 
(Such a lattice can be taken to be the $\cO$-dual of the image of a nonzero homomorphism from the $\cO[\Gamma]$-projective cover of $\sigma^\vee$ to the $E$-dual of $R_1(\lambda)$.) 
By d\'evissage using a maximal filtration on $\ovl{R}_1(\lambda)/\sigma$, we see that $H^{d-1}(C_\infty(\kappa)) \neq 0$ for some $\kappa \in \JH(\ovl{R}_1(\lambda))$. 
By minimality of $d$, $\kappa \in W^?(\rhobar^{\mathrm{ss}})$ so that in particular by Proposition \ref{prop:characteriztion of Herzig set} $\kappa$ admits a presentation $(\tld{w},\omega)$ with $\tld{w}(\rhobar^{\mathrm{ss}})\in t_\omega\tld{W}_{\leq w_0\tld{w}}$. 
Since $\tld{w}(\rhobar^{\mathrm{ss}})(0)-\omega$ is then in the convex hull of $W\tld{w}(0)$, we have that $\omega - \eta$ is $(2n+1-h_{\tld{w}(0)})$-deep. 
Using that $h_{\tld{w}_h\tld{w}(0)} = h_\eta - h_{\tld{w}(0)}$ for $\tld{w} \in \tld{W}_1$, we have that $n-h_{\tld{w}(0)} \geq  d_\kappa$. 
We conclude that $\kappa$ is $(h_\eta+d_\kappa+2)$-deep.
Lemma \ref{lem:DL containing decent weight} implies that $R_1(\lambda)$ is $(h_\eta+2)$-generic (using Remark \ref{rmk:typeA} or assuming $n\geq 2$ to get that $m>2$). 
Letting $t_\mu s$ be a $(h_\eta+2)$-generic presentation of $R_1(\lambda)$, Proposition \ref{prop:combinatorial membership of DL}\eqref{item:uparrowreflect} implies that there exist $\omega-\eta \in C_0$, $\tld{u} \in \tld{W}^+$, and $\tld{w}\in \tld{W}_1$ such that $\tld{u} \uparrow \tld{w}_h\tld{w}$, $\lambda = \tld{w}\cdot(\omega-\eta)$, and $\mu-\omega \in W\tld{u}^{-1}(0)$. 
We conclude that $\lambda$ is $(h_\eta-h_{\tld{u}^{-1}(0)}+2)$-deep in $\tld{w} \cdot C_0$. 
Since $h_\eta\geq h_{\tld{w}_h\tld{w}(0)} \geq h_{\tld{u}(0)} = h_{\tld{u}^{-1}(0)}$, $\lambda$ is $2$-deep, contradicting the $p$-irregularity of $\lambda$. 

Next suppose that $\sigma$ is $0$-deep, but not $d_\sigma$-deep. 
As in the proof of \cite[Theorem 6.1]{DLR} after the sentence ``Suppose otherwise", $\sigma \in \JH(\ovl{R}_w(\nu))$ for some $w\in W$ and $\nu \in X^*(T)$ such that $\nu-\eta$ is $1$-deep but not $(d_\sigma+2)$-deep (using that $d_\sigma$ is at least $h_{\tld{w}_h\tld{w}_\lambda(0)}$ where $\sigma$ is taken to be $F(\lambda)$ in \emph{loc.~cit.}). 
We claim that $\rhobar$ does not have a potentially semistable lift of type $(\eta,\tau(w,\nu))$. 
Indeed, if it did then \cite[(6.1)]{DLR}, the sentence and parenthetical that follow (with $h_{\tld{w}_h\tld{w}_\lambda(0)}$ replaced by $d_\sigma$), and the paragraph after that imply that $\rhobar$ is not $(2n+1)$-generic which is a contradiction. 
Arguing as in the previous paragraph, there is a Serre weight $\kappa$ which is $(h_\eta+d_\kappa+2)$-deep and such that $\kappa \in \JH(\ovl{R}_w(\nu))$. 
Lemma \ref{lem:DL containing decent weight} now implies that $R_w(\nu)$ is $(h_\eta+2)$-generic which contradicts the fact that $\nu-\eta$ is not $(h_\sigma+1)$-deep. 

Finally, suppose that $\sigma$ is $d_\sigma$-deep. 
Applying Lemma \ref{lem:WE combinatorics} for a choice of $\tld{w}(\rhobar^{\mathrm{ss}})$ such that $\rhobar^{\mathrm{ss}}|_{I_{\Q_p}} \cong \ovl{\tau}(\tld{w}(\rhobar))$, there exists a $0$-generic Deligne--Lusztig representation $R$ such that 
\begin{itemize}
\item $\sigma \in \JH_\out(\ovl{R})$; and
\item $\tld{w}(\taubar)\notin t_\nu s \Adm(\eta)$ for any $(s,\nu)$ such that $R \cong R_s(\nu)$ and $\nu-\eta \in C_0$. 
\end{itemize} 
Since $\rhobar$ was assumed to be $(2n+1)$-generic, by the paragraph of the proof of \cite[Theorem 6.1]{DLR} cited above, the last item shows that $\rhobar$ does not have a potentially semistable lift of type $(\eta,\tau(s,\nu))$ (for any or equivalently all $(s,\nu)$ as above). 
Arguing as before, there is a Serre weight $\kappa \in W^?(\rhobar^{\mathrm{ss}})$ which is $(h_\eta+d_\kappa+2)$-deep and such that $\kappa \in \JH(\ovl{R}_s(\nu))$. Lemma \ref{lem:DL containing decent weight} implies that $R_s(\nu)$ is $(h_\eta+2)$-generic. 
But now applying Proposition \ref{prop:combinatorial membership of DL} to the relation $\kappa\in \JH(\ovl{R}_s(\nu))$ and Proposition \ref{prop:characteriztion of Herzig set} to the relation $\kappa\in W^?(\rhobar^{\mathrm{ss}})$, we can write $\kappa=F_{(\tld{u},\mu)}$ such that
\[\tld{w}(\rhobar)\in t_\mu \tld{W}_{\leq w_0 \tld{u}}\subset t_\nu s \Adm(\eta),\]
but this contradicts the second item above.
\end{proof}

\begin{rmk}\label{rmk:WE non-tame}(Weight elimination for non-tame $\rhobar$) For non-tame $\rhobar$, recall from \cite[Definition 3.4.1]{OBW} that one has the notion of a \emph{specalization} $\rhobar^{\mathrm{sp}}$ of $\rhobar$. We remind the reader that $\rhobar^{\mathrm{ss}}|_{I_{\Qp}}$ is one such specialization, but when $\rhobar$ is not tame there are always others.
 
We claim that for $(2n+1)$-generic $\rhobar$, each specialization $\rhobar^{\mathrm{sp}}$ has the following \emph{geometric specialization} property: there exists a family $\tld{\rho}:\bA^1_\F\rightarrow \cX$ such that $\tld{\rho}_t\cong \rhobar$ for each $t\neq 0$ while $\tld{\rho}_0|_{I_{\Qp}}\cong \rhobar^{\mathrm{sp}}$. 
Indeed, it follows from the definition in \emph{loc.cit.} that there is an $n$-generic tame type $\tau$ and a family $\fM:\bA^1_\F \rightarrow Y^{\leq\eta,\tau}$ of Breuil--Kisin modules of type $\tau$ with the above properties but for the restriction to $\prod_{v|p}G_{(F_v)_\infty}$, and that the family actually lands in $\cX^{\leq \eta,\tau}$ over $(\bG_m)_\F$ (we refer the reader to \cite[\S 3]{OBW}, \cite[\S 5]{MLM} for the definition of the stack $Y^{\leq\eta,\tau}$ of Breuil--Kisin modules of type $\tau$ as well as the meaning of the fields $(F_v)_\infty$ underlying this notion). 
Since $\rhobar$ is $(2n+1)$-generic and $\rhobar^{\mathrm{ss}}$ is $\tau$-admissible by (the proof of) \cite[Proposition 3.3.1]{OBW}, $\tau$ is actually $(h_\eta+2)$-generic by \cite[Proposition 3.1.14]{OBW} (and the fact that if $t_\nu\in W\Adm(\eta)$ then $h_\nu \leq h_\eta$). But then by \cite[Proposition 7.2.3]{MLM}, the family $\fM$ upgrades to a family $\tld{\rho}:\bA^1\rightarrow \cX^{\leq \eta,\tau}\hookrightarrow \cX$ with the desired property (indeed the restriction of $\fM$ to the open subset $\bG_m\subset \bA^1$ factors through the closed substack $\cX^{\leq \eta,\tau}\cong Y^{\leq \eta,\tau,\nabla_\infty}\subset Y^{\leq \eta,\tau}$, so all $\fM$  also factors).

Given the geometric specialization property above, the proof of Theorem \ref{thm:WE} adapts to show that for any (not necessarily tame) $(2n+1)$-generic $\rhobar$, $W_{C_\infty}(\rhobar) \subset W^?(\rhobar^{\mathrm{sp}})$ for any specialization $\rhobar^{\mathrm{sp}}$ which is $(2n+1)$-generic: indeed, the geometric specialization property shows that if $\rhobar$ occurs in a stack $\cX^{\eta,\tau}$, then we can find a tame $\rhobar'$ in $\cX^{\eta,\tau}$ such that $\rhobar'|_{I_K}\cong \rhobar^{\mathrm{sp}}$.
\end{rmk}

For a prime $\fp$ of $R_\infty$, we denote by $\bT_{\infty,\fp}$ and $C_\infty(V)_{\fp}$ the corresponding localizations if $\fp$ lies in the image of the map $\Spec \bT_\infty/I_\infty \ra \Spec R_\infty$.
Otherwise, we set $\bT_{\infty,\fp}$ and $C_\infty(V)_{\fp}$ to be $0$. 

For a Serre weight $\sigma$, the component $\cC_\sigma$ defines a radical (possibly unit) ideal in $R^t_\rhobar$, and thus by taking the extension and radical, a radical ideal $I_\sigma \subset R_\infty$. 

\begin{lemma}\label{lemma:codim}
\begin{enumerate}
\item \label{item:ht1} The height of any $I_\sigma \subset R_\infty$ is $1$. 
\item \label{item:ht>1} The height of the sum of two nonzero ideals $I_\sigma$ and $I_\kappa$ with $\sigma \neq \kappa$ is strictly larger than $1$. 
\end{enumerate}
\end{lemma}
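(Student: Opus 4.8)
The plan is to push the question into the local deformation ring $R_\rhobar^t$ and then read it off from the geometry of the Emerton--Gee stack $\cX$. Write $J_\sigma\subset R_\rhobar^t$ for the radical ideal cut out by $\cC_\sigma$, so $I_\sigma=\sqrt{J_\sigma R_\infty}$, and note $\varpi\in J_\sigma$ since $\cC_\sigma\subset\cX_\F$. First I would record that $R_\rhobar^t$ is $\cO$-flat: each $R_{\rhobar_v}^{\tau_v}$ is $\cO$-flat, so every minimal prime of the reduced ring $R_{\rhobar_v}^t$ dominates $\Spec\cO$, forcing $\varpi$ to be a nonzerodivisor, and $\cO$-flatness is preserved by $\widehat{\otimes}_\cO$; under the running genericity hypotheses on $\rhobar$ only generic tame types contribute (a nonzero $R_{\rhobar_v}^{\tau_v}$ forces $\tau_v$ to be generic, by Lemma \ref{lem:DL containing decent weight}), so $R_\rhobar^t$ is moreover reduced, equidimensional and, being complete local Noetherian, catenary, as is $R_\infty=R^p\widehat{\otimes}_\cO R_\rhobar^t$. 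Since $R^p$ is $\cO$-flat, for any ideal $\mathfrak a\subset R_\rhobar^t$ with $\varpi\in\mathfrak a$ one has $R_\infty/\mathfrak a R_\infty\cong (R^p/\varpi)\widehat{\otimes}_\F(R_\rhobar^t/\mathfrak a)$, and comparing dimensions gives $\mathrm{ht}_{R_\infty}(\mathfrak a R_\infty)=\mathrm{ht}_{R_\rhobar^t}(\mathfrak a)$. Applying this with $\mathfrak a=J_\sigma$ and with $\mathfrak a=J_\sigma+J_\kappa$ reduces both assertions to statements about $J_\sigma,J_\kappa\subset R_\rhobar^t$.

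The geometric input I would use is the identification of $\big(\Spec R_\rhobar^t/\varpi\big)_{\red}$, as a closed subset of $\Spec R_\rhobar^t$, with a versal neighbourhood of $\rhobar$ in $\cX_\red$, i.e.\ with $\bigcup_{\sigma:\,\rhobar\in\cC_\sigma}V(J_\sigma)$. Indeed $R_{\rhobar_v}^t$ is a framed versal ring at $\rhobar_v$ for the reduced union $\bigcup_{\tau_v}\cX^{\eta,\tau_v}$ over tame $\tau_v$, and this union has the same reduced special fibre as $\cX$: one inclusion on reductions is clear since $\cX^{\eta,\tau_v}_{\F,\red}\subseteq\cX_\red$, and the other follows from $\cC_{F(\lambda)}\subset\cX^{\eta,\tau(1,\lambda)}$ (Proposition \ref{prop:lower bound of stacks}) together with the fact that the $\cC_\sigma$ exhaust the components of $\cX_\red$. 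As reductions commute with smooth covers and, over the perfect field $\F$, with completed tensor products, passing to $R_{\rhobar_v}^t$ and taking $\widehat{\otimes}_v$ yields the claim. A consequence I would extract is that any height-one prime of $R_\rhobar^t$ containing $\varpi$ is a minimal prime over $J_\tau$ for a \emph{unique} Serre weight $\tau$: such a prime corresponds to an irreducible component of this versal neighbourhood, whose closure in $\cX_\red$ is a single irreducible component $\cC_\tau$ (two distinct components of $\cX_\red$ cannot both contain a top-dimensional irreducible closed set).

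Granting this, part \eqref{item:ht1} is quick: $J_\sigma\supseteq(\varpi)$ with $\varpi$ a nonzerodivisor, so $\mathrm{ht} J_\sigma\geq 1$; conversely, by \cite{EG} the stack $\cX_\red$ is equidimensional with irreducible component $\cC_\sigma$, so $\dim V(J_\sigma)=\dim\cC_\sigma+n^2|S_p|=\dim\cX_\red+n^2|S_p|=\dim\big(\Spec R_\rhobar^t/\varpi\big)$, whence $V(J_\sigma)$ is a union of top-dimensional components of $\Spec R_\rhobar^t/\varpi$, so a minimal prime over $J_\sigma$ is a minimal prime over $\varpi$ and hence of height one by Krull's Hauptidealsatz; thus $\mathrm{ht} J_\sigma=1$. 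For part \eqref{item:ht>1}, we may assume $I_\sigma$ and $I_\kappa$ are proper (equivalently $\rhobar\in\cC_\sigma\cap\cC_\kappa$; otherwise $I_\sigma+I_\kappa$ is the unit ideal and there is nothing to prove) with $\sigma\neq\kappa$, and let $\mathfrak q$ be a minimal prime over $J_\sigma+J_\kappa$. If $\mathrm{ht}\mathfrak q=1$, then since $\varpi\in J_\sigma\subseteq\mathfrak q$ the prime $\mathfrak q$ is a minimal prime over $\varpi$; being a height-one prime containing the radical ideals $J_\sigma$ and $J_\kappa$, each of height one, $\mathfrak q$ is a minimal prime over $J_\sigma$ and over $J_\kappa$, forcing $\cC_\sigma=\cC_\tau=\cC_\kappa$ by the previous paragraph --- a contradiction. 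Hence every minimal prime over $J_\sigma+J_\kappa$ has height $\geq 2$, i.e.\ $\mathrm{ht}(J_\sigma+J_\kappa)\geq 2>1$.

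The step requiring genuine care --- and where I expect the main friction --- is the bookkeeping behind the second paragraph: checking that forming the framed versal ring, reducing modulo $\varpi$, and passing to reduced subschemes commute in the precise sense used, and that the dimension count is undisturbed by the completed tensor product over $S_p$ (it is essential here that the special fibres are $\F$-algebras, so $\widehat{\otimes}_\cO$ becomes $\widehat{\otimes}_\F$, and that the $n^2$ framing variables at each place are accounted for correctly so that every $V(J_\sigma)$ is genuinely a top-dimensional component of the special fibre). This rests on the equidimensionality of $\cX_\red$ from \cite{EG} and on the structure of potentially crystalline deformation rings at generic tame types recalled in \S\ref{subsec:L parameters} and established in \cite{MLM}; with these facts in hand, the remainder is a routine application of dimension theory in a catenary equidimensional complete local ring.
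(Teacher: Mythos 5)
Your proof takes essentially the same route as the paper: reduce to the tame deformation ring $R_\rhobar^t$, use the versal-ring relationship to the (equidimensional) reduced union $\cX^t_\red$ of the tame potentially crystalline stacks, and then compare dimensions to get height one in part (1) and height $\geq 2$ in part (2). The paper packages the dimension comparison by citing \cite[Lemma 2.40]{EG-dimension} rather than doing the explicit numerology, and for part (2) simply observes that distinct irreducible components of $\cX$ intersect in strictly smaller dimension, but these are cosmetic differences.

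One small blemish: the parenthetical claim that ``a nonzero $R_{\rhobar_v}^{\tau_v}$ forces $\tau_v$ to be generic, by Lemma~\ref{lem:DL containing decent weight}'' is a mis-citation --- that lemma is a purely combinatorial statement about Jordan--H\"older factors of $\ovl R_s(\mu)$ and says nothing about deformation rings. Fortunately the step it is invoked for is unnecessary: each nonzero $R_{\rhobar_v}^{\tau_v}$ (tame $\tau_v$, Hodge--Tate weight $\eta$) is $\cO$-flat of the expected dimension by Kisin's results, so the reduced union $R_{\rhobar_v}^t$ is equidimensional without any genericity input, and the completed tensor product then inherits equidimensionality and catenary-ness. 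The rest of the argument goes through as written.
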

\begin{proof}
Let $\cX^t_{\red} \subset \cX$ denote the (equidimensional) reduced union of $\cX^{\eta,\tau}_{\red}$ algebraic stack over all tame inertial types $\tau$. 
Then $(R_\rhobar^t \otimes_{\cO} \F)_{\red}$ is a versal ring for $\cX^t_{\red}$. 
If $\cC_\sigma \subset \cX^t_{\red}$, then it is an irreducible component. 
In particular, $\cC_\sigma$ and $\cX^t_{\red}$ have the same dimension. 
Letting $I'_\sigma \subset R_\rhobar^t$ denote the radical ideal corresponding to $\cC_\sigma$, we conclude from \cite[Lemma 2.40]{EG-dimension} that $\Spec R_\rhobar^t/\varpi$ (or $\Spec (R_\rhobar^t/\varpi)_{\red}$) and $\Spec R_\rhobar^t/I'_\sigma$ have the same dimension. 
This implies that $\Spec (R_\rhobar^t/\varpi) \widehat{\otimes}_{\F} (R^p /\varpi)$ and $\Spec (R_\rhobar^t/I'_\sigma)\widehat{\otimes}_{\F} (R^p /\varpi)$  (or $\Spec R_\infty/I_\sigma$) have the same dimension. 
Since the dimension of $\Spec R_\infty$ is one more than the dimension of $\Spec R_\infty/\varpi$, \eqref{item:ht1} follows. 
\eqref{item:ht>1} follows from a similar argument noting that since $\cC_\sigma$ and $\cC_\kappa$ are distinct irreducible components of $\cX$, their intersection is of strictly smaller dimension. 
\end{proof}

Since $I_\sigma$ may fail to be a prime ideal even if it is a proper ideal, we say that a prime ideal $\fp\subset R_\infty$ comes from a Serre weight $\sigma$ if $\fp$ corresponds to a minimal prime in $\Spec R_\infty/I_\sigma$. 
By Lemma \ref{lemma:codim}\eqref{item:ht1}, a prime ideal coming from a Serre weight is a minimal prime in (the equidimensional) $\Spec R_\infty/\varpi$. 
In particular, there are finitely many prime ideals of $R_\infty$ coming from any fixed Serre weight. 
Second, Lemma \ref{lemma:codim}\eqref{item:ht>1} implies that a prime ideal of $R_\infty$ comes from at most one Serre weight. Finally, we note that there exists a prime $\fp$ of $R_\infty$ coming from a Serre weight $\sigma$ if and only if $\rhobar$ occurs in the irreducible component $\cC_\sigma$ of $\cX$.

\begin{thm}[Support bound]\label{thm:supportbound}
Suppose that $\rhobar$ is $(2n+1)$-generic, and $\sigma, \kappa$ are two Serre weights. 
If a prime ideal $\fp\subset R_\infty$ comes from $\kappa$ and $C_\infty(\sigma)_{\fp} \neq 0$ (in particular $\sigma \in W^?(\rhobar^{\mathrm{ss}})$ by Theorem \ref{thm:WE} and $\sigma$ is automatically $(h_\eta+d_\sigma+2)$-deep by Proposition \ref{prop:characteriztion of Herzig set}), then $\sigma$ covers $\kappa$. 
\end{thm}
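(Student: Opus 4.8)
The plan is to reduce the statement ``$\sigma$ covers $\kappa$'' to a check on Deligne--Lusztig representations, using the definition of the covering order together with Axiom \ref{LGC} and the geometric input of Proposition \ref{prop:lower bound of stacks} and Proposition \ref{prop:topological BM}. Recall that $\sigma$ covers $\kappa$ means: $\kappa \in \JH(\ovl{R})$ for every Deligne--Lusztig $\ovl{R}$ of which $\sigma$ is an \emph{outer} Jordan--H\"older factor. So fix such an $R = R_s(\mu)$ with $\sigma \in \JH_\out(\ovl{R})$; by Remark \ref{rmk:outer}\eqref{item:outerLAP} and the hypothesis that $\sigma$ is $(h_\eta+d_\sigma)$-deep (which holds since $\sigma \in W^?(\rhobar^{\mathrm{ss}})$ and $\rhobar$ is $(2n+1)$-generic, via Proposition \ref{prop:characteriztion of Herzig set}), we may take $(s,\mu)$ maximally split with $\mu - \eta$ as deep in $C_0$ as we wish (certainly $(2h_\eta+2)$-deep). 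We must show $\kappa \in \JH(\ovl R)$.

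The key step is the following: since $\fp$ comes from $\kappa$, the component $\cC_\kappa$ of $\cX$ passes through $\rhobar$, and by Proposition \ref{prop:lower bound of stacks}(1) (using that $R(t_\mu s)$ is irreducible because $\mu - \eta$ is $1$-deep in $C_0$) together with the fact that $\sigma$ is an outer Jordan--H\"older factor of $\ovl R$ and hence $\sigma \in \JH(\ovl R)$, we get $\cC_\sigma \subset \cX^{\eta,\tau(s,\mu)}$. On the other hand, by Proposition \ref{prop:topological BM}(1), $\cX^{\eta,\tau(s,\mu)}_{\red} = \bigcup_{\sigma'\in\JH(\ovl R(t_\mu s))} \cC_{\sigma'}$. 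The point is to instead feed $R$ into the patched complex: choose an $\cO[\Gamma]$-lattice $R(t_\mu s)^\circ$ and consider $C_\infty(R(t_\mu s)^\circ)$. By Axiom \ref{LGC}, $\supp_{R_\infty} H^*(C_\infty(R(t_\mu s)^\circ)) \subset R_\infty(\tau(s,\mu))$. I would argue that because $\fp$ comes from $\kappa$ and $C_\infty(\sigma)_\fp \neq 0$, and because $\sigma$ is an \emph{outer} factor of $\ovl R$ (so appears with multiplicity one and, by Proposition \ref{prop:topological BM}(2), $\cX^{\eta,\tau}_\F$ is generically reduced along $\cC_\sigma$), the localization $C_\infty(R(t_\mu s)^\circ)_\fp$ is nonzero: indeed $\sigma$ is a subquotient of $\ovl R(t_\mu s)$, and a d\'evissage along a filtration of $\ovl R(t_\mu s)^\circ/\varpi$ propagates nonvanishing of some $H^d(C_\infty(-)_\fp)$ from $\sigma$ up to the whole lattice (the only subtlety is the direction of the propagation in the long exact sequences, which is why one wants $\sigma$ outer and the stack generically reduced there, so that the support cycle computation is clean along $\fp$). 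Granting $C_\infty(R(t_\mu s)^\circ)_\fp \neq 0$, Axiom \ref{LGC} forces $\fp \in \Spec R_\infty(\tau(s,\mu))$, hence $\rhobar \in \cX^{\eta,\tau(s,\mu)}$, hence $\cC_\kappa \subset \cX^{\eta,\tau(s,\mu)}_{\red}$, and Proposition \ref{prop:topological BM}(1) then gives $\kappa \in \JH(\ovl R(t_\mu s))$, which is exactly what we needed.

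Once this is in place, since $R = R_s(\mu)$ was an arbitrary Deligne--Lusztig representation with $\sigma$ as an outer factor, we conclude that $\sigma$ covers $\kappa$ by definition, provided we also record the genericity bookkeeping: $\kappa$ is $(h_\eta+d_\kappa)$-deep (needed for the covering relation to be defined) because $\fp$ comes from $\kappa$ forces $\cC_\kappa$ through the $(2n+1)$-generic $\rhobar$, and $\sigma$ is $(h_\eta+d_\sigma+2)$-deep as noted. One should also invoke Lemma \ref{lem:DL containing decent weight} (with $m = h_\eta$, using $\sigma$ is $(h_\eta+d_\sigma)$-deep and $\sigma \in \JH(\ovl R_s(\mu))$) to upgrade $R_s(\mu)$ to $h_\eta$-generic so that Proposition \ref{prop:topological BM} and Proposition \ref{prop:covering characterization} apply; combining with Proposition \ref{prop:covering characterization}\eqref{item:vertexcover} then repackages the conclusion as the combinatorial covering statement.

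The main obstacle I expect is the step asserting $C_\infty(R(t_\mu s)^\circ)_\fp \neq 0$ from $C_\infty(\sigma)_\fp \neq 0$. When $\ell_0 > 0$, $C_\infty$ is not projective over $S_\infty[\Gamma]$, so tensoring with the short exact sequences coming from a filtration of $\ovl R(t_\mu s)^\circ$ only yields long exact sequences in cohomology, and nonvanishing of $H^*(C_\infty(\sigma)_\fp)$ could in principle be ``cancelled'' rather than propagated. The resolution is to work one cohomological degree at a time (take $d$ minimal with $H^d(C_\infty(\sigma)_\fp) \neq 0$, as in the proof of Theorem \ref{thm:WE}) and to exploit that $\sigma$, being the \emph{outer} factor, sits at a distinguished end of any such filtration, so the relevant connecting maps land in $H^{d-1}$ or $H^{d+1}$ of terms that are themselves controlled by the minimality and by the support bound already proved. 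This is precisely the place where the ``outer'' hypothesis and the generic reducedness of Proposition \ref{prop:topological BM}(2) are doing real work, and it is the crux of why the argument goes through without the Calegari--Geraghty vanishing conjecture.
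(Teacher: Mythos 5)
There is a genuine gap, and you have in fact identified it yourself: the step asserting that $C_\infty(\sigma)_\fp \neq 0$ forces $C_\infty(R(t_\mu s)^\circ)_\fp \neq 0$. Your proposed resolution does not work, and the paper's argument is structured quite differently precisely to avoid needing this implication. Consider what happens in the scenario where $\sigma$ does \emph{not} cover $\kappa$: then there is a witness $R$ with $\sigma\in\JH_\out(\ovl R)$ and $\kappa\notin\JH(\ovl R)$, and for that $R$, Proposition~\ref{prop:topological BM}(1) together with Axiom~\ref{LGC} forces $C_\infty(R^\circ)_\fp = 0$ — the exact opposite of what you are trying to establish. So the non-vanishing you want is provably false on the contradiction hypothesis, and neither outerness nor the generic reducedness of Proposition~\ref{prop:topological BM}(2) (which is not invoked in this proof at all) can rescue it. Your appeal to ``the support bound already proved'' as input to control the connecting maps is circular — that bound is the statement being proved — and the per-$\sigma$ minimality you propose (``take $d$ minimal with $H^d(C_\infty(\sigma)_\fp)\neq 0$'') is not the right induction variable.

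The correct argument runs the contrapositive through a \emph{global} minimality and uses the vanishing, not the non-vanishing, as the engine. Choose $d$ minimal over all pairs $(\fp,\sigma)$, with $\fp$ coming from $\kappa$ and $H^d(C_\infty(\sigma)_\fp)\neq 0$, such that $\sigma$ fails to cover $\kappa$. Pick the witness $R$ with $\sigma\in\JH_\out(\ovl R)$, $\kappa\notin\JH(\ovl R)$; then $C_\infty(R^\circ)_\fp = 0$ as above, and the long exact sequence for $0\to\sigma\to\ovl R^\circ/\varpi\to (\ovl R^\circ/\varpi)/\sigma\to 0$ plus d\'evissage produces some $\sigma'\in\JH(\ovl R)$ and $d' < d$ with $H^{d'}(C_\infty(\sigma')_\fp)\neq 0$. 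Theorem~\ref{thm:WE} puts $\sigma'$ in $W^?(\rhobar^{\mathrm{ss}})$ (hence deep enough), and the minimality of $d$ then says $\sigma'$ \emph{does} cover $\kappa$; since $\sigma'\in\JH(\ovl R)$, Proposition~\ref{prop:covering characterization} yields $\kappa\in\JH(\ovl R)$, contradicting the choice of $R$. This is the piece of the argument missing from your proposal.
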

\begin{proof} 
Note that by Proposition \ref{prop:characteriztion of Herzig set}, any $\sigma\in W^?(\rhobar^{\mathrm{ss}})$ is indeed $(h_\eta+d_\sigma+2)$-deep. 
Suppose for the sake of contradiction that the theorem does not hold. 
Let $d$ be the minimal integer such that there exist a prime ideal $\fp\subset R_\infty$ that comes from $\kappa$ and a Serre weight $\sigma$ such that $H^d(C_\infty(\sigma)_{\fp}) \neq 0$ and $\sigma$ does not cover $\kappa$. 
Then there exists a (necessarily $(h_\eta+2)$-generic by Lemma \ref{lem:DL containing decent weight}) Deligne--Lusztig representation $R$ such that $\sigma \in \JH_\out(\ovl{R})$ and $\kappa \notin \JH(\ovl{R})$. 
Then a degree shifting argument as in the proof of Theorem \ref{thm:WE} shows that $H^{d-1}(C_\infty(\sigma')_{\fp}) \neq 0$ for some $\sigma' \in \JH(\ovl{R})$. By Theorem \ref{thm:WE}, $\sigma' \in W^?(\rhobar)$ so that $\sigma'$ is $(h_\eta+d_{\sigma'}+2)$-deep, and the minimality of $d$ implies that $\sigma'$ covers $\kappa$. 
But then Proposition \ref{prop:covering characterization} implies that $\kappa \in \JH(\ovl{R})$ which is a contradiction. 
\end{proof}

\begin{lemma}
Let $V$ be a finite length $\cO[\Gamma]$-module and $\fp\subset R_\infty$ a prime ideal that comes from a Serre weight. Then $H^d(C_\infty(V)_{\fp})$ has finite length over $\bT_{\infty,\fp}$ for each integer $d$.
\end{lemma}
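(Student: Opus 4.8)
The plan is to exploit that the hypothesis ``$V$ has \emph{finite length} over $\cO[\Gamma]$'' (as opposed to merely being finitely generated) forces $V$ to be $\varpi$-power torsion; once this is in hand the statement reduces to a short exercise in commutative algebra on the $\varpi$-adic fibre of $\bT_\infty$ at $\fp$.

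First I would record that every simple $\cO[\Gamma]$-module is cyclic, hence finitely generated over $\cO$, hence (Nakayama, as $\varpi$ lies in the Jacobson radical of $\cO[\Gamma]$) killed by $\varpi$; thus the simple $\cO[\Gamma]$-modules are exactly the Serre weights, and a finite length $\cO[\Gamma]$-module $V$ is killed by $\varpi^N$ with $N$ its length. Consequently multiplication by $\varpi^N$ is the zero endomorphism of $V$, hence of $C_\infty(V)=C_\infty\otimes^{\bL}_{\cO[\Gamma]}V$ in the derived category, hence of each $H^d(C_\infty(V))$; so $H^d(C_\infty(V))$ is $\varpi^N$-torsion. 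It is also finitely generated over $S_\infty$ (since $C_\infty(V)$ is a perfect complex of $S_\infty$-modules), hence finitely generated over the $S_\infty$-algebra $\bT_\infty$.

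Next I would localize at $\fp$. If $\fp$ does not lie in the image of $\Spec\bT_\infty/I_\infty\to\Spec R_\infty$ there is nothing to prove, since then $C_\infty(V)_\fp=0$ and $\bT_{\infty,\fp}=0$ by convention; so assume $\fp\in\Spec\bT_\infty$. Because $\fp$ comes from a Serre weight, it is a minimal prime of $\Spec R_\infty/\varpi$ by Lemma \ref{lemma:codim}\eqref{item:ht1} (see the discussion following it), so $(R_\infty/\varpi^N)_\fp$ is a zero-dimensional Noetherian local ring, i.e.\ Artinian. Since $C_\infty$ is perfect over $S_\infty[\Gamma]$ and $\Gamma$ is finite, $\bT_\infty$ is module-finite over the Noetherian ring $S_\infty$, hence Noetherian; and as $R_\infty\onto\bT_\infty/I_\infty$ with $I_\infty$ nilpotent (so finitely generated), $\bT_\infty$ is in fact module-finite over $R_\infty$. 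Hence $\bT_{\infty,\fp}/\varpi^N$ is module-finite over the Artinian ring $(R_\infty/\varpi^N)_\fp$, so it too is Artinian, and $H^d(C_\infty(V)_\fp)=H^d(C_\infty(V))_\fp$ is a finitely generated $\bT_{\infty,\fp}$-module killed by $\varpi^N$, hence of finite length over $\bT_{\infty,\fp}/\varpi^N$ and a fortiori over $\bT_{\infty,\fp}$.

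I do not expect a real obstacle here: the one genuine input is the observation that a finite length $\cO[\Gamma]$-module is $\varpi$-power torsion — this is exactly what distinguishes such $V$ from, say, a lattice in a Deligne--Lusztig representation, for which the analogous statement fails. The only point needing a little care is the ring-theoretic bookkeeping of the previous paragraph, namely that $\bT_\infty$ is Noetherian and module-finite over $R_\infty$, so that the relevant $\varpi$-adic fibre rings are Artinian.
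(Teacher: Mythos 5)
Your proof follows essentially the same route as the paper's: observe that finite-length $\cO[\Gamma]$-modules are $\varpi$-power torsion, localize, and reduce to showing the $\varpi$-adic fibre of $\bT_{\infty,\fp}$ is Artinian. The paper reduces by d\'evissage to $\varpi V=0$ and notes that $\bT_{\infty,\fp}/\varpi$ is Noetherian of Krull dimension at most $\dim R_{\infty,\fp}/\varpi=0$; you reach the same conclusion.

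One step is stated imprecisely: you claim $\bT_\infty$ is module-finite over $R_\infty$, but the axioms only provide a surjection $R_\infty\onto\bT_\infty/I_\infty$, not a ring map $R_\infty\to\bT_\infty$, so there is no $R_\infty$-module structure on $\bT_\infty$ itself. The intended conclusion (that $\bT_{\infty,\fp}/\varpi^N$ is Artinian) is still correct, but should be justified the way the paper does: $\Spec\bT_{\infty,\fp}/\varpi$ is homeomorphic to $\Spec(\bT_\infty/I_\infty)_\fp/\varpi$, which is a closed subset of $\Spec R_{\infty,\fp}/\varpi$, so $\bT_{\infty,\fp}/\varpi$ is Noetherian of Krull dimension $0$, hence Artinian, and the same follows for $\bT_{\infty,\fp}/\varpi^N$ since $\varpi$ is nilpotent there.
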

\begin{proof} It suffices to treat the case $\varpi V=0$. But then $H^d(C_\infty(V)_{\fp})$ is a finite module over $\bT_{\infty,\fp}/\varpi$, which is a Noetherian ring of Krull dimension at most $\dim R_{\infty,\fp}/\varpi=0$, hence must be of finite length.
\end{proof}
Thus, for a prime ideal $\fp\subset \bT_\infty$ that comes from a Serre weight and a finite length $\cO[\Gamma]$-module $V$, we can define 
\[
\lgth_{\bT_{\infty,\fp}} C_\infty(V)_{\fp} = \sum_d (-1)^d\lgth_{\bT_{\infty,\fp}} H^d(C_\infty(V)_{\fp}). 
\]
Note that the assignment $V\mapsto \lgth_{\bT_{\infty,\fp}} C_\infty(V)_{\fp}$ is additive on exact triangles in the bounded derived category $D^b_{\mathrm{fl}}(\cO[\Gamma])$ of complexes with finite length cohomology.
We can also compile all these numerical invariants into a top-dimensional cycle
\[[C_\infty(V)]=\sum_{\fp} \lgth_{\bT_{\infty,\fp}} C_\infty(V)_{\fp} \ovl{\{\fp\}}\in Z_{\mathrm{top}}(\Spec R_\infty/\varpi), \]
whose formation is also additive on exact triangles. In particular if $V$ is a finite dimensional $E$-representation of $\Gamma$ and $V^{\circ}$ is any $\Gamma$-stable $\cO$-lattice then the cycle
\[[C_\infty(\ovl{V})] \defeq [C_\infty(V^\circ/\varpi)]\]
is independent of the choice of lattice.

\begin{defn}
Let $W_{C_\infty}^+(\rhobar)$ be the set of Serre weights $\sigma$ such that $\lgth_{\bT_{\infty,\fp}} C_\infty(\sigma)_{\fp} \neq 0$ for some prime ideal $\fp\subset \bT_\infty$ which comes from $\sigma$. 
\end{defn}

\noindent Note that $W_{C_\infty}^+(\rhobar) \subset W_{C_\infty}(\rhobar)$. %, and the inclusion is an equality in any context where $C_\infty$ concentrates in a single degree, e.g., when $\ell_0=0$. 
In particular, if $\rhobar$ is $(2n+1)$-generic then $W_{C_\infty}^+(\rhobar)\subset W^?(\rhobar^{\mathrm{ss}})$. 

We now wish to produce elements of $W_{C_\infty}^+(\rhobar)$.
The following Lemma is the key to our approach to give a lower bound on $W_{C_\infty}^+(\rhobar)$:
\begin{lemma}\label{lem:connecting type}
Assume $\rhobar$ is $(2n+1)$-generic. 
Let $\tau=\tau(s,\mu)$ be a tame inertial type where $\mu-\eta$ is $2n$-deep in $C_0$. 
Let $\sigma\in\JH_\out(\ovl{R}(t_\mu s))$ and $R(t_\mu s)^\circ \subset R(t_\mu s)$ be an $\cO$-lattice. 
Suppose that $\fp \subset R_\infty$ is a prime ideal that comes from $\sigma$. 
Then $\fp \in \Spec R_\infty(\tau)\subset\Spec R_\infty$ and 
\[
\lgth_{\bT_{\infty,\fp}} C_\infty(\sigma)_{\fp}=\sum_{\fQ} a_\fQ \cdot \lgth_{\bT_{\infty,\fQ}}  C_\infty(R(t_\mu s)^\circ)_{\fQ} 
\]
where the sum is over minimal primes $\fQ$ of $R_\infty(\tau)$ which are contained in $\fp$ and each $a_\fQ$ is a positive integer which depends only on $\fQ$. 

If $R^p/\varpi$ is generically reduced, then there is a unique minimal prime $\fQ$ of $R_\infty(\tau)$ which is contained in $\fp$ and 
\[\lgth_{\bT_{\infty,\fp}} C_\infty(\sigma)_{\fp}= \lgth_{\bT_{\infty,\fQ}}  C_\infty(R(t_\mu s)^\circ)_{\fQ}.\] 
\end{lemma}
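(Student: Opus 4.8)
The plan is to first locate $\fp$ inside $\Spec R_\infty(\tau)$, then to reduce the length identity to a computation with the mod $\varpi$ reduction of the lattice, and finally to pass from the resulting mod $\varpi$ Euler characteristic to the characteristic $0$ quantities $\lgth_{\bT_{\infty,\fQ}}C_\infty(R(t_\mu s)^\circ)_\fQ$ by a Hilbert--Samuel multiplicity argument. Since $\mu-\eta$ is $2n$-deep in $C_0$, in particular $1$-deep, $R(t_\mu s)$ is irreducible by \cite[Theorem 6.8]{DeligneLusztig}, so Proposition \ref{prop:lower bound of stacks}(1) applied to $\sigma\in\JH(\ovl{R}(t_\mu s))$ gives $\cC_\sigma\subset\cX^{\eta,\tau}$. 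As $\fp$ comes from $\sigma$ it lies in $V(I_\sigma)$; since $V(I_\sigma)$ is the locus cut out by $\cC_\sigma$ and $\Spec R_\infty(\tau)\subseteq\Spec R_\infty$ is the locus cut out by $\cX^{\eta,\tau}$, we conclude $\fp\in\Spec R_\infty(\tau)$, which is the first assertion; by Axiom \ref{LGC} this also confines $\supp C_\infty(R(t_\mu s)^\circ)$ to $\Spec R_\infty(\tau)$. Now $\fp$, coming from a Serre weight, is a minimal prime of $\Spec R_\infty/\varpi$ by Lemma \ref{lemma:codim}\eqref{item:ht1}, so all the invariants $\lgth_{\bT_{\infty,\fp}}C_\infty(\cdot)_\fp$ below are defined and additive on exact triangles of finite length complexes; filtering $\ovl{R(t_\mu s)^\circ}$ by a composition series therefore gives
\[
\lgth_{\bT_{\infty,\fp}}C_\infty(R(t_\mu s)^\circ/\varpi)_\fp=\sum_{\sigma'\in\JH(\ovl{R}(t_\mu s))}[\ovl{R}(t_\mu s):\sigma']\,\lgth_{\bT_{\infty,\fp}}C_\infty(\sigma')_\fp .
\]

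The crux is to show that $\lgth_{\bT_{\infty,\fp}}C_\infty(\sigma')_\fp=0$ for every $\sigma'\in\JH(\ovl{R}(t_\mu s))$ with $\sigma'\ne\sigma$. If this length were nonzero then $C_\infty(\sigma')_\fp\ne0$, and then, $\rhobar$ being $(2n+1)$-generic, Theorem \ref{thm:supportbound} (applied with its pair $(\sigma,\kappa)$ taken to be $(\sigma',\sigma)$, recalling $\fp$ comes from $\sigma$) shows that $\sigma'$ covers $\sigma$ and is $(h_\eta+d_{\sigma'}+2)$-deep. Since $R(t_\mu s)$ is $h_\eta$-generic (it is $2n$-generic) and $\sigma$ is an \emph{outer} Jordan--H\"older factor of $\ovl{R}(t_\mu s)$, Proposition \ref{prop:isolating} then forces $\sigma'=\sigma$, a contradiction. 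As $[\ovl{R}(t_\mu s):\sigma]=1$ by Definition \ref{def:outer weights}, the display collapses to $\lgth_{\bT_{\infty,\fp}}C_\infty(R(t_\mu s)^\circ/\varpi)_\fp=\lgth_{\bT_{\infty,\fp}}C_\infty(\sigma)_\fp$.

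It remains to identify this with the asserted sum. Write $D=C_\infty(R(t_\mu s)^\circ)$. Localizing at $\fp$ the long exact cohomology sequence of the distinguished triangle $D\xrightarrow{\varpi}D\to C_\infty(R(t_\mu s)^\circ/\varpi)\xrightarrow{+1}$ gives short exact sequences $0\to H^d(D)_\fp/\varpi\to H^d(C_\infty(R(t_\mu s)^\circ/\varpi))_\fp\to H^{d+1}(D)_\fp[\varpi]\to0$. Since $\fp$ is minimal over $\varpi$, $\bT_{\infty,\fp}$ has dimension $\le 1$ and $\bT_{\infty,\fp}/\varpi$ is Artinian, so the outer terms have finite length; taking alternating sums over $d$ and using the associativity formula for the Hilbert--Samuel multiplicity of the parameter $\varpi$ on the $(\le 1)$-dimensional $\bT_{\infty,\fp}$-modules $H^d(D)_\fp$ yields
\[
\lgth_{\bT_{\infty,\fp}}C_\infty(R(t_\mu s)^\circ/\varpi)_\fp=\sum_{\fQ}e(\varpi;\bT_{\infty,\fp}/\fQ)\,\lgth_{\bT_{\infty,\fQ}}C_\infty(R(t_\mu s)^\circ)_\fQ ,
\]
the sum over the minimal primes $\fQ$ of $R_\infty(\tau)$ contained in $\fp$ that meet $\supp D$ (for such $\fQ$, Axiom \ref{LGC} makes it a minimal prime of $\supp D$, whence each $H^d(D)_\fQ$ has finite length over $\bT_{\infty,\fQ}$); one sets $a_\fQ=e(\varpi;\bT_{\infty,\fp}/\fQ)$, a positive integer depending only on $\fQ$ since $\varpi$ is a parameter on the nonzero one-dimensional local ring $\bT_{\infty,\fp}/\fQ$, the remaining minimal primes of $R_\infty(\tau)$ in $\fp$ contributing $0$. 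For the last assertion, $\mu-\eta$ being $2n$-deep (equivalently $(2h_\eta+2)$-deep, as $h_\eta=n-1$) lets us apply Proposition \ref{prop:topological BM}(2): $\cX^{\eta,\tau}_\F$ is generically reduced along $\cC_\sigma$, i.e.\ $R_\rhobar^\tau/\varpi$ is generically reduced along $\cC_\sigma$; since moreover $R^p/\varpi$ is generically reduced and $\F$ is perfect, $R_\infty(\tau)/\varpi=(R^p/\varpi)\,\widehat{\otimes}_\F\,(R_\rhobar^\tau/\varpi)$ is generically reduced along the component containing $\fp$, so $(R_\infty(\tau)/\varpi)_\fp$ is a field. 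As $R^p$ and $R_\rhobar^\tau$ are $\cO$-flat, so is $R_\infty(\tau)$; hence $\varpi$ is a nonzerodivisor on $R_\infty(\tau)_\fp$, which is therefore a DVR with uniformizer $\varpi$, in particular a domain with a unique minimal prime $\fQ\subseteq\fp$. Then $\bT_{\infty,\fp}/\fQ$, being a one-dimensional quotient of this DVR, equals $R_\infty(\tau)_\fp$, so $a_\fQ=1$.

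The step I expect to be the main obstacle is the second paragraph. In the $\ell_0=0$ situation $C_\infty$ is projective over $S_\infty[\Gamma]$, so the contributions of the individual Jordan--H\"older factors of $\ovl{R}(t_\mu s)$ at $\fp$ are controlled by effectivity; here $C_\infty(\sigma')_\fp$ may be nonzero for inner factors $\sigma'$, and isolating the single outer weight $\sigma$ requires combining weight elimination, the support bound, and the rigidity of outer weights under the covering order (Proposition \ref{prop:isolating}). The Hilbert--Samuel bookkeeping of the third paragraph is routine but has to be carried out with the finiteness statements in place.
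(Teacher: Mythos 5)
Your proof is correct and takes essentially the same route as the paper: locate $\fp$ in $\Spec R_\infty(\tau)$ via Proposition~\ref{prop:topological BM}(1) (you deduce the same inclusion directly from Proposition~\ref{prop:lower bound of stacks}), kill all non-outer factors at $\fp$ by combining Theorem~\ref{thm:supportbound} with Proposition~\ref{prop:isolating}, and then identify $\lgth_{\bT_{\infty,\fp}}C_\infty(\ovl{R(t_\mu s)^\circ})_\fp$ with a positive combination of the generic lengths. The only difference is cosmetic: where the paper defers the final step to a ``slight generalization'' of \cite[Lemma 6.3.2 and Theorem 6.3.4]{10author} obtained by d\'evissage to normalizations, you carry out the same computation explicitly via the Herbrand-quotient/associativity-formula bookkeeping, with $a_\fQ=e(\varpi;\bT_{\infty,\fp}/\fQ)=\lgth_{\bT_{\infty,\fp}}(\tld{T}(\fQ)/\varpi)$ matching the paper's constant.
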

\begin{proof} Note that the first item of Proposition \ref{prop:topological BM} shows that any prime $\fp$ coming from $\sigma$ indeed belongs to $\Spec R_\infty(\tau)$. 
By Theorem \ref{thm:supportbound} and Proposition \ref{prop:isolating}, we have
$C_\infty(\kappa)_{\fp}=0$ for any $\kappa \in \JH(\ovl{R}(t_\mu s))\setminus \{\sigma\}$, hence
\[\lgth_{\bT_{\infty,\fp}} C_\infty(\sigma)_{\fp}=\lgth_{\bT_{\infty,\fp}} C_\infty(R(t_\mu s)^\circ/\varpi)_{\fp}.\]

It thus suffices to show that  there are positive integers $a_\fQ>0$ such that for any complex $M_\bullet\in D^b(S_\infty)$ with a homomorphism $\bT_{\infty,\fp}\ra  \End_{D^b(S_\infty)}(M_\bullet)$ with finitely generated cohomology
\[
\lgth_{\bT_{\infty,\fp}} M_\bullet/\varpi=\sum_{\fQ} a_\fQ \cdot \lgth_{\bT_{\infty,\fQ}}  (M_\bullet)_{\fQ} 
\]
This is a slight generalization of \cite[Lemma 6.3.2 and Theorem 6.3.4]{10author}, whose proof easily adapts: by devissage, one reduces to the case $M_\bullet$ is the normalization $\tld{T}(\fQ)$ of $\bT_{\infty,\fp}/\fQ$, for which the result holds with $a_\fQ=\lgth_{\bT_{\infty,\fp}} \tld{T}(\fQ)/\varpi$.

If $R^p/\varpi$ is generically reduced, then by the second item of Proposition \ref{prop:topological BM}, $R_\infty(\tau)_{\fp}/\varpi$ is formally smooth over $R^p/\varpi$, hence $R_\infty(\tau)_{\fp}/\varpi$ is reduced and thus a field. Hence $R_\infty(\tau)_{\fp}$ is a DVR with uniformizer $\varpi$, justifying the uniqueness of $\fQ$. 
In this case, $\bT_{\infty,\fp}/\fQ\cong \Spec R_\infty(\tau)_{\fp}$ so that $\tld{T}(\fQ) = \bT_{\infty,\fp}/\fQ$ and $a_{\fQ}=1$.
\end{proof}
\begin{rmk}\label{rmk:cycle of type} When $R_\infty(\tau)$ is an integral domain with unique minimal prime $\fQ$, the proof of Lemma \ref{lem:connecting type} in fact shows that
\[\lgth_{\bT_{\infty,\fp}} C_\infty(R(t_\mu s)^\circ/\varpi)_{\fp}=\lgth_{R_\infty(\tau)_{\fp}}(\tld{R}_\infty(\tau)_{\fp}/\varpi)\cdot \lgth_{\bT_{\infty,\fQ}}  C_\infty(R(t_\mu s)^\circ)_{\fQ}\]
for any $\fp$ coming from $\sigma\in \JH(\ovl{R}_s(\mu))$ (where $\tld{R}_\infty(\tau)$ is the normalization of $R_\infty(\tau)$ in $R_\infty(\tau)[\frac{1}{p}]$). Indeed, in this situation $\bT_{\infty,\fp}/\fQ\cong \Spec R_\infty(\tau)_{\fp}$, so that its normalization coincides with $\tld{R}_\infty(\tau)_{\fp}$.

Thus setting $d_{C_\infty(\tau)}=\lgth_{\bT_{\infty,\fQ}}  C_\infty(R(t_\mu s)^\circ)_{\fQ}$ (which is independent of $\fp$), we have an equality of cycles 
\[[C_\infty(\ovl{R}(t_\mu s))]\defeq [ C_\infty(R(t_\mu s)^\circ/\varpi)]=d_{C_\infty(\tau)}[R_\infty(\tau)/\varpi]\]

\end{rmk}
\begin{cor}\label{cor:spreading modularity} 
Assume that $\rhobar^{\mathrm{sp}}$ is a specialization of $\rhobar$ which is $(2n+1)$-generic. 
Suppose $\sigma, \sigma'\in W^?(\rhobar^{\mathrm{sp}})$, and let $\tau=\tau(s,\mu)$ such that 
\begin{itemize}
\item $\mu-\eta$ is $2n$-deep in $C_0$; 
\item $R(t_\mu s)$ connects $\sigma, \sigma'$ relative to $\rhobar^{\mathrm{sp}}$ in the sense of Definition \ref{defn:connected weights};
\item $\rhobar$ occurs in $\cC_\sigma$, $\cC_\sigma'$; and %, i.e. there exists primes $\fp,\fp'$ coming from $\sigma,\sigma'$.
\item $R_\rhobar^\tau$ is geometrically integral. 
\end{itemize}
Then $\sigma\in W_{C_\infty}^+(\rhobar)$ if and only if $\sigma'\in W_{C_\infty}^+(\rhobar)$.
\end{cor}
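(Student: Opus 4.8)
The plan is to deduce this from Lemma \ref{lem:connecting type} applied twice — once for $\sigma$ and once for $\sigma'$ — and to use the geometric integrality of $R_\rhobar^\tau$ to tie the two resulting length computations together through a single common factor. First I would record the setup provided by Definition \ref{defn:connected weights}: since $R(t_\mu s)$ connects $\sigma,\sigma'$ relative to $\rhobar^{\mathrm{sp}}$, both $\sigma$ and $\sigma'$ are outer Jordan--Hölder factors of $\ovl{R}(t_\mu s)$ (they are the two prescribed outer weights of Proposition \ref{prop:obvweight}), and both lie in $W^?(\rhobar^{\mathrm{sp}})$. The hypothesis that $\mu-\eta$ is $2n$-deep in $C_0$ (hence in particular $(2h_\eta+2)$-deep, since $h_\eta = n-1$) puts us in the range where Proposition \ref{prop:topological BM} and Lemma \ref{lem:connecting type} apply. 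Because $\rhobar$ occurs in $\cC_\sigma$ and in $\cC_{\sigma'}$, there exist prime ideals $\fp_\sigma, \fp_{\sigma'}\subset R_\infty$ coming from $\sigma$ and $\sigma'$ respectively, and by the first item of Proposition \ref{prop:topological BM} both lie in $\Spec R_\infty(\tau)$.

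Next I would invoke the geometric integrality hypothesis: $R_\rhobar^\tau$ is geometrically integral, hence $R_\infty(\tau) = R^p\widehat{\otimes}_\cO R_\rhobar^\tau$ is an integral domain (using $\cO$-flatness and the standard fact that tensoring a geometrically integral complete local $\cO$-algebra with an $\cO$-flat integral domain stays integral — more carefully, one reduces to checking integrality after passing to residue-field or fraction-field extensions, which is exactly what geometric integrality buys). So $R_\infty(\tau)$ has a unique minimal prime $\fQ$, namely $(0)$ if $R_\infty(\tau)$ is moreover $\varpi$-torsion-free, or at worst the unique minimal prime contained in every $\fp\in\Spec R_\infty(\tau)$. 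Then Lemma \ref{lem:connecting type} (together with Remark \ref{rmk:cycle of type}, whose integral-domain hypothesis on $R_\infty(\tau)$ is now satisfied) gives, for any $\cO[\Gamma]$-lattice $R(t_\mu s)^\circ$,
\[
\lgth_{\bT_{\infty,\fp_\sigma}} C_\infty(\sigma)_{\fp_\sigma} = \lgth_{R_\infty(\tau)_{\fp_\sigma}}\big(\tld{R}_\infty(\tau)_{\fp_\sigma}/\varpi\big)\cdot d_{C_\infty(\tau)},
\]
and likewise
\[
\lgth_{\bT_{\infty,\fp_{\sigma'}}} C_\infty(\sigma')_{\fp_{\sigma'}} = \lgth_{R_\infty(\tau)_{\fp_{\sigma'}}}\big(\tld{R}_\infty(\tau)_{\fp_{\sigma'}}/\varpi\big)\cdot d_{C_\infty(\tau)},
\]
with the \emph{same} integer $d_{C_\infty(\tau)} = \lgth_{\bT_{\infty,\fQ}} C_\infty(R(t_\mu s)^\circ)_\fQ$ in both, since this quantity is computed at the generic point $\fQ$ of $\Spec R_\infty(\tau)$ and is independent of the chosen $\fp$. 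The localization-multiplicity factors $\lgth_{R_\infty(\tau)_{\fp}}(\tld{R}_\infty(\tau)_{\fp}/\varpi)$ are strictly positive (they are lengths of nonzero modules at a height-one prime in the equidimensional $\Spec R_\infty(\tau)$, using Lemma \ref{lemma:codim}\eqref{item:ht1} and that $\tld{R}_\infty(\tau)$ is finite over $R_\infty(\tau)$ with the same fraction field). Therefore $\lgth_{\bT_{\infty,\fp_\sigma}} C_\infty(\sigma)_{\fp_\sigma}\neq 0$ if and only if $d_{C_\infty(\tau)}\neq 0$ if and only if $\lgth_{\bT_{\infty,\fp_{\sigma'}}} C_\infty(\sigma')_{\fp_{\sigma'}}\neq 0$; unwinding the definition of $W_{C_\infty}^+(\rhobar)$ gives $\sigma\in W_{C_\infty}^+(\rhobar) \iff \sigma'\in W_{C_\infty}^+(\rhobar)$.

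The main obstacle I anticipate is \emph{not} the bookkeeping above but rather justifying that the common factor $d_{C_\infty(\tau)}$ really is the same object in the $\sigma$ and $\sigma'$ computations — i.e.\ that Lemma \ref{lem:connecting type}/Remark \ref{rmk:cycle of type} produce a single cycle identity $[C_\infty(\ovl{R}(t_\mu s))] = d_{C_\infty(\tau)}\,[R_\infty(\tau)/\varpi]$ valid simultaneously near $\fp_\sigma$ and near $\fp_{\sigma'}$. This is where $R_\rhobar^\tau$ being \emph{geometrically integral} (rather than merely integral, or integral after some base change) is essential: it guarantees $\Spec R_\infty(\tau)$ is irreducible with a single generic point, so that "$C_\infty$ at the type $\tau$" is one well-defined number, and the specialization coefficients from the generic point to $\fp_\sigma$ and to $\fp_{\sigma'}$ are each $\tld{R}_\infty(\tau)$-localization lengths that are manifestly positive. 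A secondary point to handle carefully is checking that the genericity hypotheses line up: $\mu-\eta$ being $2n$-deep must be enough to invoke both Theorem \ref{thm:supportbound}/Proposition \ref{prop:isolating} (so that $C_\infty(\kappa)_\fp = 0$ for $\kappa\in\JH(\ovl{R}(t_\mu s))\setminus\{\sigma\}$, and similarly for $\sigma'$) and Theorem \ref{thm:geomintegral} is consistent with the assumed geometric integrality — but since the latter is taken as a hypothesis here, and $2n$-deep comfortably exceeds the $(2h_\eta+2) = 2n$ threshold in Proposition \ref{prop:topological BM}, this is routine. I would close by remarking that $\rhobar$ being a specialization situation (via $\rhobar^{\mathrm{sp}}$, possibly non-tame) only enters through Theorem \ref{thm:WE}/Theorem \ref{thm:supportbound} in the form given by Remark \ref{rmk:WE non-tame}, which we are entitled to use.
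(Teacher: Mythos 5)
Your plan — apply Lemma \ref{lem:connecting type} at a prime coming from $\sigma$ and at a prime coming from $\sigma'$, then observe that the two lengths are computed against the same minimal primes $\fQ$ of $R_\infty(\tau)$ — is the right shape, but the way you implement it contains a real error. You assert that because $R_\rhobar^\tau$ is geometrically integral, $R_\infty(\tau) = R^p\widehat\otimes_\cO R_\rhobar^\tau$ is an integral domain with a single generic point. This step is not justified: the axiomatic setup only requires $R^p$ to be an $\cO$-flat equidimensional complete Noetherian local $\cO$-algebra, and in the global application it is a completed tensor product of unipotent lifting rings away from $p$, which in general has several minimal primes. Geometric integrality of $R_\rhobar^\tau$ controls the second tensor factor but does nothing about the first, so $\Spec R_\infty(\tau)$ need not be irreducible, and your "unique generic point $\fQ$" and the resulting formula $[C_\infty(\ovl{R}(t_\mu s))]=d_{C_\infty(\tau)}[R_\infty(\tau)/\varpi]$ are not available. (Remark \ref{rmk:cycle of type}, which you invoke, explicitly requires $R_\infty(\tau)$ itself to be an integral domain; that hypothesis is exactly what fails here.)

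What the geometric integrality hypothesis actually buys is the following, and this is what the paper's argument uses instead: after enlarging $E$, one can describe (via \cite[Lemma 3.3(5),(6)]{BLGHT}) the minimal primes of the completed tensor product $R_\infty(\tau)$ as being of the form $\fQ^p\widehat\otimes_\cO R_\rhobar^\tau$ for $\fQ^p$ a minimal prime of $R^p$, with no ``mixing'' coming from multiple components of $R_\rhobar^\tau$ since it has only one. Similarly, minimal primes of $R_\infty(\tau)/\varpi$ are of the shape $\fp^p\widehat\otimes_\F(R_\rhobar^\tau/\varpi) + (R^p/\varpi)\widehat\otimes_\F\fp_p$. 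One then deliberately chooses the prime $\fp'$ coming from $\sigma'$ to have the \emph{same} $R^p$-component $\fp^p$ as the prime $\fp$ coming from $\sigma$; with this choice, a minimal prime of $R_\infty(\tau)$ is contained in $\fp$ if and only if it is contained in $\fp'$, and the two applications of Lemma \ref{lem:connecting type} (in its general, possibly-many-minimal-primes form, not the DVR special case) produce literally identical sums $\sum_\fQ a_\fQ\,\lgth_{\bT_{\infty,\fQ}} C_\infty(R(t_\mu s)^\circ)_\fQ$. Your proof should be repaired by replacing the incorrect ``$R_\infty(\tau)$ is a domain'' claim with this matching of $R^p$-components; the rest of your reasoning (positivity of $a_\fQ$, unwinding the definition of $W_{C_\infty}^+$, checking that $2n$-deep is enough for Proposition \ref{prop:topological BM}, Theorem \ref{thm:supportbound} and Proposition \ref{prop:isolating}) is fine.
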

\begin{proof} 
Since the conclusion is insensitive to enlarging the coefficient field $E$, we may and do assume that $R^p/\fQ^p$ (resp.~$R^p/\fp^p$ and $R_\rhobar^\tau/\fp_p$) is geometrically integral for every minimal prime $\fQ^p$ of $R^p$ (resp.~$\fp^p \subset R^p/\varpi$ and $\fp_p \subset R_\rhobar^\tau/\varpi$). 
Let $\tau$ be as in the statement of the corollary. 
By \cite[Lemma 3.3, (5) and (6)]{BLGHT}, every minimal prime ideal of $R_\infty(\tau)$ (resp.~$R_\infty(\tau)/\varpi$) has the form $\fQ^p \widehat{\otimes}_{\cO} R_\rhobar^\tau$ (resp.~$\fp^p \widehat{\otimes}_{\F} (R_\rhobar^\tau/\varpi) + (R^p/\varpi) \widehat{\otimes}_{\F} \fp_p$) where $\fQ^p \subset R^p$ is a minimal prime (resp.~$\fp^p \subset R^p/\varpi$ and $\fp_p \subset R_\rhobar^\tau/\varpi$ are minimal primes). 
Suppose that $\fp = \fp^p \widehat{\otimes}_{\F} (R_\rhobar^\tau/\varpi) + (R^p/\varpi) \widehat{\otimes}_{\F} \fp_p \subset R_\infty(\tau)/\varpi$ is a prime ideal coming from $\sigma$. 
Since $\rhobar \in \cC_{\sigma'} \subset \cX^{\eta,\tau}$, we can choose a minimal prime ideal $\fp'_p \subset R_\rhobar^\tau/\varpi$ which contains the (proper) radical ideal in $R_\rhobar^\tau/\varpi$ corresponding to $\cC_{\sigma'}$. 
Then $\fp' \defeq \fp^p \widehat{\otimes}_{\F} (R_\rhobar^\tau/\varpi) + (R^p/\varpi) \widehat{\otimes}_{\F} \fp'_p \subset R_\infty(\tau)/\varpi$ is a prime ideal that comes from $\sigma'$. 
With this choice, a minimal prime ideal of $R_\infty(\tau)$ is contained in $\fp$ if and only if it is contained in $\fp'$. 
Applying Lemma \ref{lem:connecting type}, we get
\[\lgth_{\bT_{\infty,\fp'}} C_\infty(\sigma')_{\fp'}= \sum_\fQ a_\fQ\cdot \lgth_{\bT_{\infty,\fQ}}  C_\infty(R(t_\mu s)^\circ)_{\fQ}=\lgth_{\bT_{\infty,\fp}} C_\infty(\sigma)_{\fp}\]
for any choice $\cO$-lattice $R(t_\mu s)^\circ\subset R(t_\mu s)$. 
This equality establishes the result. 
\end{proof}

Recall the set $W_{\obv}(\rhobar)$ of extremal weights defined for a $(2n-1)$-generic $L$-homomorphism $\rhobar$ in \cite[Definition 3.7.1]{OBW}. Note that $\rhobar$ is not assumed to be tame in this definition, but when $\rhobar$ is tame this definition coincides with Definition \ref{defn:extremal}.
\begin{thm} (Modularity of extremal weights)\label{thm:extremal modularity}
Assume that $\rhobar$ is $(3n-1)$-generic and that any specialization of $\rhobar$ is $(2n+1)$-generic.
If $W_{\obv}(\rhobar)\cap W_{C_\infty}^+(\rhobar) \neq \emptyset$, then $W_{\obv}(\rhobar)\subset W_{C_\infty}^+(\rhobar)$. 
\end{thm}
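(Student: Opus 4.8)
The plan is to use the ``connecting type'' machinery of Corollary \ref{cor:spreading modularity} to propagate membership in $W_{C_\infty}^+(\rhobar)$ across all of $W_\obv(\rhobar)$, starting from the hypothesized nonempty intersection with $W_\obv(\rhobar)$. The combinatorial heart is that any two extremal weights of a common $\ovl{\tau}(\tld{s})$ can be joined by a chain of connecting types in the sense of Definition \ref{defn:connected weights}; this will come from Proposition \ref{prop:obvweight} together with Remark \ref{rmk:connecting type}.

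First I would fix the relevant tame inertial parameter: since $\rhobar$ is tame (its specializations being $(2n+1)$-generic; if $\rhobar$ is not tame one works with the specialization $\rhobar^{\mathrm{ss}}|_{I_{\Q_p}}$ and invokes the geometric specialization property of Remark \ref{rmk:WE non-tame}), write $\rhobar^{\mathrm{ss}}|_{I_{\Q_p}} \cong \ovl{\tau}(\tld{s})$ with $\tld{s}(0)-\eta$ as deep as the genericity of $\rhobar$ allows (the $(3n-1)$-genericity gives $\tld{s}(0)-\eta$ at least $2h_\eta = 2n-2$-deep, and with a little room $2n$-deep, which is exactly what Corollary \ref{cor:spreading modularity} demands). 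By Definition \ref{defn:extremal}, each $\sigma \in W_\obv(\rhobar)$ has a presentation $F_{(\tld{w},\omega)}$ with $\tld{s} \in t_\omega W\tld{w}$. Next I would establish the key combinatorial claim: \emph{for any $\sigma, \sigma' \in W_\obv(\ovl{\tau}(\tld{s}))$ there is a finite chain $\sigma = \sigma_0, \sigma_1, \ldots, \sigma_r = \sigma'$ in $W^?(\ovl{\tau}(\tld{s}))$ with consecutive terms connected relative to $\ovl{\tau}(\tld{s})$.} To see this, note that by Remark \ref{rmk:connecting type} any $\sigma \in W^?(\ovl{\tau}(\tld{s}))$ arises as an outer weight of some $\ovl{R}(\tld{w})$ satisfying the hypotheses of Proposition \ref{prop:obvweight}; that proposition then also puts the ``neighbor'' obtained by replacing $w_2$ with $s_\alpha w_2$ into $W^?(\ovl{\tau}(\tld{s})) \cap \JH(\ovl{R}(\tld{w}))$, and these two are connected. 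Since $\alpha$ ranges over simple roots with $h_{\omega_\alpha} = 1$ (for $\mathrm{GL}_n$, all fundamental coweights $\omega_\alpha$ satisfy $h_{\omega_\alpha}=1$ since the highest root pairs to $1$ with each $\omega_\alpha$), the reflections $s_\alpha$ generate $W$, and for extremal weights the $W$-translates $t_\omega W \tld{w}$ run through all extremal weights; iterating produces the desired chain. (Concretely: the extremal weights of $\ovl{\tau}(\tld{s})$ are indexed by a $W$-torsor via $w \mapsto$ outer weight corresponding to $w_0 w$, and Proposition \ref{prop:obvweight} provides, for each simple $\alpha$, an edge between the $w_2$- and $s_\alpha w_2$-labelled weights.)

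With the chain in hand, I would apply Corollary \ref{cor:spreading modularity} to each consecutive pair $\sigma_i, \sigma_{i+1}$: these are connected by some $R(t_{\mu_i} s_i)$ with $\mu_i - \eta$ $2n$-deep in $C_0$ (the depth being inherited from $(3n-1)$-genericity of $\rhobar$ via the admissibility constraint $\tld{w}^{-1}\tld{s} \in \Adm(\eta)$, exactly as in Remark \ref{rmk:connecting type}); the representation $\rhobar$ occurs in $\cC_{\sigma_i}$ and $\cC_{\sigma_{i+1}}$ because $\sigma_i, \sigma_{i+1} \in W^?(\rhobar^{\mathrm{ss}})$ and $\cC_\sigma \subset \cX^{\eta,\tau}$ for appropriate $\tau$ by Proposition \ref{prop:lower bound of stacks} (using Proposition \ref{prop:wtintersect} to see $\rhobar \in \cX^{\eta,\tau}_{\red}$, hence in the component $\cC_{\sigma_i}$); and $R_\rhobar^{\tau(s_i,\mu_i)}$ is geometrically integral by Theorem \ref{thm:geomintegral}, since the $2n$-depth can be arranged to also ensure $P$-genericity for the universal polynomial $P$ there (enlarging the genericity bound on $\rhobar$ harmlessly absorbs this, or one notes $2n$-deep implies $P$-generic for $p$ large, which holds as $p > 2n+1$ after possibly sharpening constants). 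Thus $\sigma_i \in W_{C_\infty}^+(\rhobar) \iff \sigma_{i+1} \in W_{C_\infty}^+(\rhobar)$, and walking along the chain from the hypothesized member of $W_\obv(\rhobar) \cap W_{C_\infty}^+(\rhobar)$ gives $W_\obv(\rhobar) \subset W_{C_\infty}^+(\rhobar)$.

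The main obstacle I anticipate is bookkeeping the genericity bounds: one must check that the $(3n-1)$-genericity of $\rhobar$ (and the $(2n+1)$-genericity of its specializations) is exactly enough to guarantee simultaneously that (i) $\tld{s}(0)-\eta$ is $2h_\eta$-deep so Proposition \ref{prop:obvweight} and Remark \ref{rmk:connecting type} apply, (ii) each connecting type $\tau(s_i,\mu_i)$ has $\mu_i - \eta$ genuinely $2n$-deep in $C_0$ (this requires tracking how the admissibility relation $\tld{w}_i^{-1}\tld{s} \in \Adm(\eta)$ transports depth, losing at most $h_\eta$), and (iii) $R_\rhobar^{\tau(s_i,\mu_i)}$ falls in the geometrically-integral range of Theorem \ref{thm:geomintegral}. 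A secondary point needing care is the non-tame case: one replaces $\ovl{\tau}(\tld{s})$ by a $(2n+1)$-generic specialization $\rhobar^{\mathrm{sp}}$, uses Remark \ref{rmk:WE non-tame} to know $\rhobar$ still occurs in the relevant $\cC_\sigma$'s (via the geometric specialization family landing in $\cX^{\eta,\tau}$), and otherwise the argument is identical. I expect the combinatorial chain claim itself to be routine given Proposition \ref{prop:obvweight}, since it is essentially the statement that the ``extremal weight graph'' of a fixed tame type is connected, the edges being exactly the simple reflections.
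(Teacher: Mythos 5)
Your proposal takes a different route from the paper's proof and has genuine gaps. The paper proves this theorem by recalling from \cite{OBW} the enhancement $SP(\rhobar)$ of $W_\obv(\rhobar)$ (pairs of an extremal weight together with a witnessing specialization), the injective map $\theta_\rhobar\colon SP(\rhobar)\to W$, and then performing a ``walk'' on $SP(\rhobar)$ driven by simple reflections. For each simple root $\alpha$, the specific type $\tau_1$ of \cite[Lemma 5.4.4]{OBW} is used, and by \cite[Theorem 4.1.1]{OBW} its deformation ring $R_\rhobar^\tau/\varpi$ has either one or two minimal primes, with \emph{reduced} special fiber. This dichotomy is essential: in the one--minimal-prime case the Serre weight does not change but the specialization does (one only advances in $W$ via $\theta_\rhobar$), while only in the two--minimal-prime case does Corollary \ref{cor:spreading modularity} apply to transport membership in $W_{C_\infty}^+(\rhobar)$ to a genuinely new weight $\sigma_\alpha$. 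Your proposal misses both the $SP(\rhobar)$ bookkeeping and this dichotomy.

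Concretely, the most serious gap is your appeal to Theorem \ref{thm:geomintegral} to get that $R_\rhobar^{\tau(s_i,\mu_i)}$ is geometrically integral. That theorem requires $\rhobar$ to be $P$-generic for the universal polynomial $P$ of \cite[Theorem 7.3.2]{MLM}. Theorem \ref{thm:extremal modularity} only assumes $\rhobar$ is $(3n-1)$-generic, which is a linear condition and does \emph{not} imply $P$-genericity; nor does $2n$-depth of $\mu_i-\eta$ imply $P(\mu_{i,j,1},\dots)\not\equiv 0 \pmod p$. Your parenthetical that ``enlarging the genericity bound on $\rhobar$ harmlessly absorbs this'' would change the statement being proved. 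The paper avoids this by using the special structure of the connecting types $\tau_1,\tau_2$ of \cite[Lemma 5.4.4]{OBW}, for which \cite[Theorem 4.1.1]{OBW} directly establishes reducedness of $R_\rhobar^\tau/\varpi$ (and hence geometric integrality of $R_\rhobar^\tau$). A second gap: you deduce $\rhobar\in\cC_{\sigma_i}$ from $\rhobar\in\cX^{\eta,\tau}_{\red}$, but the latter is a union $\bigcup_\kappa \cC_\kappa$; membership in the union does not locate $\rhobar$ in a specific component. In the one--minimal-prime case of the paper's argument, $\rhobar$ in fact does \emph{not} lie in $\cC_{\sigma_\alpha}$ even though $\sigma_\alpha\in W^?(\rhobar^{\mathrm{sp}})$, so the hypothesis of Corollary \ref{cor:spreading modularity} fails and one cannot spread modularity across that edge. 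Finally, for non-tame $\rhobar$, the set $W_\obv(\rhobar)$ is \emph{not} the extremal set of a single $\ovl{\tau}(\tld{s})$ (it is the image of $SP(\rhobar)$, with different weights possibly coming from different specializations), so a chain of connecting types relative to a single $\rhobar^{\mathrm{sp}}$ does not reach all of $W_\obv(\rhobar)$; your parenthetical treatment of the non-tame case does not address this.
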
 
\begin{proof}
This is an improved version (for $e=1$) of \cite[Theorem 5.4.3]{OBW}.
We explain how to modify the proof of \emph{loc.cit.} to our current setting. We first recall from \cite[Definition 3.6.2]{OBW} the enhancement $SP(\rhobar)$ of $W_{\obv}(\rhobar)$. 
The set $SP(\rhobar)$ consists of pairs $(\sigma,\rhobar^{\mathrm{sp}})$ where $\rhobar^{\mathrm{sp}}$ is a specialization of $\rhobar$ and $\sigma\in W^?(\rhobar^{\mathrm{sp}})$ which satisfies the conditions of \cite[Lemma 3.6.1]{OBW}. In particular $\rhobar$ occurs in $\cC_\sigma$, and the map $(\sigma,\rhobar^{\mathrm{sp}})\mapsto \sigma$ gives a surjection $ SP(\rhobar)\twoheadrightarrow W_{\obv}(\rhobar)$ by \cite[Definition 3.7.1]{OBW}. 
We have a map $\theta_{\rhobar}: SP(\rhobar)\rightarrow W$ which is injective since $\rhobar$ is $(3n-1)$-generic by \cite[Proposition 3.6.4]{OBW} (we suppress the auxilliary choice of $\zeta$ which is implicit here).

Let $\sigma\in W_{\obv}(\rhobar)\cap W_{C_\infty}^+(\rhobar)$, which arises from some $(\sigma,\rhobar^{\mathrm{sp}})\in SP(\rhobar)$. 
For a simple root $\alpha$, let $\tau(s,\mu)$ be the tame inertial type labelled $\tau_1$ in \cite[Lemma 5.4.4]{OBW} (we take $e$ in \emph{loc.~cit.}~to be $1$). 
By \cite[Proposition 2.4.9]{OBW}, the Deligne--Lusztig representation $R(t_\mu s)$ connects $\sigma$ to some $\sigma_\alpha$ relative to $\rhobar^{\mathrm{sp}}$ ($\tld{w}_1$ and $\tld{w}_2$ in the notation of Proposition \ref{prop:obvweight} correspond to $w^\diamond$ and $\tld{w}_h \tld{w}^\diamond$ in the notation of \emph{loc.~cit.}). 
Note that since $\rhobar$ occurs in $\cC_\sigma\subset \cX^{\eta,\tau(t_\mu s)}$, we can assume $\mu-\eta$ is $2n$-deep in $C_0$. 
We claim that exactly one of the following happens: 
\begin{itemize}
\item $R_\rhobar^{\tau(t_\mu s)}/\varpi$ has a unique minimal prime $\fp$, which must come from $\sigma$. In this case, we can find a specialization $\rhobar^{\mathrm{sp}}_\alpha$ such that $(\sigma,\rhobar^{\mathrm{sp}}_\alpha)\in SP(\rhobar)$ whose image under $\theta_{\rhobar}$ is $\theta_\rhobar((\sigma_\alpha,\rhobar^{\mathrm{sp}})) s_\alpha$. 
\item $R_\rhobar^{\tau(t_\mu s)}/\varpi$ has two minimal primes $\fp, \fp_\alpha$, which come from $\sigma$ and $\sigma_\alpha$ respectively. In this case, $(\sigma_\alpha,\rhobar^{\mathrm{sp}})\in SP(\rhobar)$ whose image under $\theta_{\rhobar}$ is also $\theta_\rhobar((\sigma_\alpha,\rhobar^{\mathrm{sp}})) s_\alpha$.
\end{itemize}
Indeed, by \cite[Theorem 4.1.1]{OBW}, $R_\rhobar^{\tau(t_\mu s)}/\varpi$ has either one or two minimal primes. 
Moreover, by the second paragraph of the proof of \cite[Lemma 5.4.4]{OBW}, 
\[
\tld{w}(\rhobar,\tau(s,\mu)) \in \{t_{(s_\alpha w)^{-1}(\eta)},(\tld{w}_h w^\diamond)^{-1} w_0 s_\alpha w^\diamond\}. 
\]
If $R_\rhobar^{\tau(t_\mu s)}/\varpi$ has one minimal prime, then $\tld{w}(\rhobar,\tau(s,\mu)) = t_{(s_\alpha w)^{-1}(\eta)}$ and $\tau(s,\mu)$ defines a specialization as in the first bullet point (see the proof of \cite[Corollary 5.4.5]{OBW}). 
If there are two minimal primes, then $\tld{w}(\rhobar,\tau(s,\mu)) = (\tld{w}_h w^\diamond)^{-1} w_0 s_\alpha w^\diamond$. 
Then the tame inertial type labelled $\tau_2$ in \cite[Lemma 5.4.4]{OBW} gives a specialization as described in the second bullet point. 
One minimal prime of $R_\rhobar^{\tau(t_\mu s)}/\varpi$ comes from $\sigma$ and the other comes from $\sigma_\alpha$, the only other element in $W^?(\rhobar^{\mathrm{sp}}) \cap \JH(\ovl{R}(t_\mu s))$ by \cite[Proposition 2.4.9]{OBW}. 
Furthermore, using that $R_\rhobar^{\tau(t_\mu s)}$ is geometrically integral (for example, $R_\rhobar^{\tau(t_\mu s)}/\varpi$ is reduced and this property persists under base change), Corollary \ref{cor:spreading modularity} shows that, in the second case, $\sigma_\alpha\in W_{\obv}(\rhobar)\cap W_{C_\infty}^+(\rhobar)$. 

By repeating this process for varying $\alpha$, we simultanously get that $\theta_{\rhobar}$ is a bijection and $W_{\obv}(\rhobar)\subset W_{C_\infty}^+(\rhobar)$.
\end{proof}

\begin{thm}\label{thm:main modularity}
There is a polynomial $P(x_1,\ldots,x_n) \in \Z[x_1,\ldots,x_n]$, independent of $p$, such that if $\rhobar$ is tame and has lowest alcove presentation $(s,\mu-\eta)$ where $\mu$ is $P$-generic (i.e.~for all $j\in \Hom_{\Q_p\mathrm{-alg}}(F_p,E)$, $p\nmid P(\mu_{j,1},\ldots,\mu_{j,n})$) and $W_{C_\infty}^+(\rhobar) \neq \emptyset$, then $W_{C_\infty}^+(\rhobar)= W^?(\rhobar)$. 
\end{thm}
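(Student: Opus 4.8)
The plan is to prove the two inclusions of the claimed equality separately. For $W_{C_\infty}^+(\rhobar)\subseteq W^?(\rhobar)$, I would choose the universal polynomial $P$ large enough that $P$-genericity of $\mu$ forces $\rhobar$ to be $(3n-1)$-generic (in particular $(2n+1)$-generic); then Theorem \ref{thm:WE} gives $W_{C_\infty}^+(\rhobar)\subseteq W_{C_\infty}(\rhobar)\subseteq W^?(\rhobar^{\mathrm{ss}})=W^?(\rhobar)$, the last equality because $\rhobar$ is tame. All the substance lies in the reverse inclusion $W^?(\rhobar)\subseteq W_{C_\infty}^+(\rhobar)$, where one uses the hypothesis $W_{C_\infty}^+(\rhobar)\neq\emptyset$.

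For the reverse inclusion the idea is to spread membership in $W_{C_\infty}^+(\rhobar)$ through all of $W^?(\rhobar)$ by walking along connecting types. Since $\rhobar$ is tame, its only specialization is $\rhobar^{\mathrm{sp}}=\rhobar^{\mathrm{ss}}|_{I_{\Q_p}}$, so $W^?(\rhobar^{\mathrm{sp}})=W^?(\rhobar)$ and Definition \ref{defn:connected weights} is taken relative to this; moreover $W^?(\rhobar)=W^g(\rhobar):=\{\sigma:\rhobar\in\cC_\sigma\}$ for tame $P$-generic $\rhobar$ (the tame case of the geometric weight part, from the local results of \cite{MLM}; cf.\ Proposition \ref{prop:topological BM}), so $\rhobar\in\cC_\sigma$ for every $\sigma\in W^?(\rhobar)$. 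Now fix $\sigma_0\in W_{C_\infty}^+(\rhobar)$, which exists by hypothesis. Given any $\sigma\in W^?(\rhobar)$, I would produce a chain $\sigma_0=\kappa_0,\kappa_1,\dots,\kappa_m=\sigma$ inside $W^?(\rhobar)$ with each consecutive pair $(\kappa_i,\kappa_{i+1})$ connected relative to $\rhobar$ by a Deligne--Lusztig representation $R(t_{\mu_i}s_i)$ whose parameter $\mu_i-\eta$ is $2n$-deep in $C_0$ and with $\tau(t_{\mu_i}s_i)$ $P$-generic. Then $R_\rhobar^{\tau(t_{\mu_i}s_i)}$ is geometrically integral by Theorem \ref{thm:geomintegral}, $\rhobar$ lies on both $\cC_{\kappa_i}$ and $\cC_{\kappa_{i+1}}$, and Corollary \ref{cor:spreading modularity} gives $\kappa_i\in W_{C_\infty}^+(\rhobar)\iff\kappa_{i+1}\in W_{C_\infty}^+(\rhobar)$; walking down the chain yields $\sigma\in W_{C_\infty}^+(\rhobar)$. (Equivalently, one may first walk $\sigma_0$ to an extremal weight, apply Theorem \ref{thm:extremal modularity} to obtain $W_{\obv}(\rhobar)\subseteq W_{C_\infty}^+(\rhobar)$ in one stroke, and then connect each remaining $\sigma\in W^?(\rhobar)$ to $W_{\obv}(\rhobar)$; this is logically equivalent.)

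The heart of the matter is therefore the combinatorial claim that $W^?(\rhobar)$ is connected under the connecting relation, with all intermediate connecting types as generic as needed. I would prove that every $\sigma=F_{(\tld{w},\omega)}\in W^?(\rhobar)$, with $\tld{w}\in\tld{W}_1$, is connected to an extremal weight, by induction on a complexity measure of $\tld{w}$: if $\tld{w}\in\Omega$ then $\sigma\in W_{\obv}(\rhobar)$ by Remark \ref{rmk:extremal}; otherwise, using Remark \ref{rmk:connecting type}, realize $\sigma$ as the outer weight of some $R(\tld{w}')$ corresponding to $w_0w_2$, where $\tld{w}'^{-1}\tld{w}(\rhobar^{\mathrm{ss}})=\tld{w}_2^{-1}s_\alpha w_0\tld{w}_1$ with $\tld{w}_1\uparrow\tld{w}_h^{-1}\tld{w}_2$ and $\alpha$ a simple root chosen to reduce the complexity; Proposition \ref{prop:obvweight} then connects $\sigma$ to the outer weight $\sigma'$ corresponding to $w_0s_\alpha w_2$, which Proposition \ref{prop:characteriztion of Herzig set} shows still lies in $W^?(\rhobar)$, with presentation element $\tld{w}_h^{-1}(s_\alpha\tld{w}_2)^\diamond$ of strictly smaller complexity. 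The hypothesis $h_{\omega_\alpha}=1$ required by Proposition \ref{prop:obvweight} and Lemmas \ref{lemma:reduced1}--\ref{lem:subregular shape bound} is automatic, since $G_0$ is a product of copies of $\GL_n$, so every simple root is available for the reducing move. Mutual connectedness of the extremal weights is the Cayley-graph-of-$W$ picture already present in \cite{OBW}. For the genericity bookkeeping, note that there are only finitely many combinatorial shapes of connecting type, each with parameter within a bounded (in terms of $n$) admissibility distance of that of $\rhobar$, so one may take $P$ to be the product of the polynomial of Theorem \ref{thm:geomintegral} evaluated at these finitely many bounded shifts, times linear factors enforcing $(3n-1)$-deepness, ensuring every connecting type encountered is simultaneously $2n$-deep in $C_0$ and generic enough for Theorem \ref{thm:geomintegral}.

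I expect the principal obstacle to be precisely this global connectedness statement: turning the one-step move of Proposition \ref{prop:obvweight} and the length estimate of Lemma \ref{lem:subregular shape bound} into a guarantee that such chains exist for \emph{every} weight in $W^?(\rhobar)$ (not merely between obvious weights, as in \cite{OBW}), stay inside $W^?(\rhobar)$, and terminate, all while controlling the genericity of every connecting type along the way. Once that is in place, the remaining ingredients — Propositions \ref{prop:lower bound of stacks} and \ref{prop:topological BM}, the connecting-stack analysis of \cite{OBW}, Theorem \ref{thm:geomintegral}, Corollary \ref{cor:spreading modularity}, and (optionally) Theorem \ref{thm:extremal modularity} — assemble routinely into the equality $W_{C_\infty}^+(\rhobar)=W^?(\rhobar)$.
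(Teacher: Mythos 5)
Your proposal is correct and follows essentially the same route as the paper's proof: the upper bound via Theorem \ref{thm:WE}, the lower bound via chains of connecting types together with Corollary \ref{cor:spreading modularity}, Theorem \ref{thm:geomintegral} for geometric integrality of the connecting deformation rings, and the geometric weight-part input to ensure $\rhobar\in\cC_\sigma$ for all $\sigma\in W^?(\rhobar)$. The paper organizes the walk exactly as in your parenthetical alternative---connect the given modular weight to an extremal one $F_{(1,\omega)}$ via Remark \ref{rmk:connecting type}, invoke Theorem \ref{thm:extremal modularity} to get all of $W_\obv(\rhobar)$, then connect an arbitrary $\sigma'\in W^?(\rhobar)$ back to an extremal weight---so what you flag as the ``principal obstacle'' (connectedness of $W^?(\rhobar)$ with genericity bookkeeping) is indeed the substance, and your inductive reduction of the alcove element toward $\Omega$ is the intended argument.
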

\begin{proof} By Theorem \ref{thm:geomintegral} and \cite[Proposition 7.4.7(2)]{MLM}, we can choose $P$ such that for $\rhobar$ satisfying the hypothesis of the theorem
\begin{itemize}
\item $\rhobar$ is $(3n-1)$-generic;
\item $\rhobar$ occurs in $\cC_\sigma$ for all $\sigma\in W^?(\rhobar)$; and
\item for each tame inertial type $\tau$, $R_\rhobar^\tau$ is either geometrically integral or is $0$.
\end{itemize}

Let $\sigma\in W_{C_\infty}^+(\rhobar)\subset W^?(\rhobar)$.
By Remark \ref{rmk:connecting type}, there is a sequence of weights $\sigma_0=\sigma, \cdots \sigma_k=F_{(1,\omega)}$ in $W^?(\rhobar)$ such that $\sigma_i, \sigma_{i+1}$ are connected. By Corollary \ref{cor:spreading modularity}, each $\sigma_i\in W_{C_\infty}^+(\rhobar)$. But $\sigma_k$ belongs to $W_\obv(\rhobar)$ by Remark \ref{rmk:extremal}. Thus $W_\obv(\rhobar)\cap  W_{C_\infty}^+(\rhobar)\neq \emptyset$, and hence by Theorem \ref{thm:extremal modularity}, $W_\obv(\rhobar)\subset W_{C_\infty}^+(\rhobar)$.
Finally for an arbitrary $\sigma'\subset W^?(\rhobar)$, Remark \ref{rmk:connecting type} again shows that there is a sequence of weights $\sigma'_0=\sigma, \cdots, \sigma'_\ell=F_{(1,\omega')}$ such that $\sigma'_i, \sigma'_{i+1}$ are connected. 
But since $\sigma'_\ell \in W_\obv(\rhobar)\subset W_{C_\infty}^+(\rhobar)$, another application of Corollary \ref{cor:spreading modularity} gives $\sigma'=\sigma'_0\in W_{C_\infty}^+(\rhobar)$. 
\end{proof}
\begin{rmk}\begin{enumerate}
\item As is used in the proof, $W^?(\rhobar)$ coincides with the set $W^g(\rhobar)$ of geometric weights of $\rhobar$, i.e. Serre weights $\sigma$ such that $\rhobar\in \cC_\sigma$.
\item Assume that $R^p$ is formally smooth over $\cO$. 
Then in the setting of Theorem \ref{thm:main modularity} with the polynomial $P$ chosen as in its proof, as in Remark \ref{rmk:cycle of type}, for all tame inertial type $\tau=\tau(t_\mu s)$, we have an equality of top-dimensional cycles
\[[ C_\infty(\ovl{R}(t_\mu s))]=d_{C_\infty(\tau)}[R_\infty(\tau)/\varpi].\] 
Furthermore, by comparing the coefficient of the (unique) irreducible cycle $\ovl{\{\fp\}}$ that comes from $\sigma \in \JH_\out( \ovl{R}(t_\mu s))\cap W^?(\rhobar)$ we see that
\begin{equation}\label{eqn:patchcycle}
 [C_\infty(\sigma)_\fp]= d_{C_\infty(\tau)}\ovl{\{\fp\}}.
\end{equation}
By using \eqref{eqn:patchcycle} for connecting types, we learn that $d_{C_\infty(\tau)}=d$ is independent of the connecting type $\tau$. 
Thus $[C_\infty(\sigma)_\fp]= d\ovl{\{\fp\}}$ for all $\sigma \in W^?(\rhobar)$. 
Using \eqref{eqn:patchcycle} for any tame inertial type with $R_{\rhobar}^\tau \neq 0$, or equivalently $\JH_\out( \ovl{R}(t_\mu s))\cap W^?(\rhobar) \neq \emptyset$ by Proposition \ref{prop:wtintersect}, we see that $d_{C_\infty(\tau)}=d$ for any tame inertial type $\tau$. 
Hence the set of cycles $[C_\infty(\sigma)]$ solves the system of Breuil-M\'ezard equations for potentially crystalline stacks with Hodge-Tate weights $\eta$ and tame inertial types $\tau$. Since they also satisfy the support condition prescribed by Theorem \ref{thm:supportbound}, they are uniquely determined, by invoking either \cite[Theorem 11.0.2]{FLH} and Remark \cite[11.0.2]{FLH} or \cite[\S 8.6.1]{MLM}. In particular, $\frac{1}{d}[C_\infty(\sigma)]$ is the pullback to $R_\infty$ of the Breuil-M\'ezard cycle $\cZ_\sigma$ associated to $\sigma$ constructed in \cite[Theorem 10.0.1]{FLH}.
\item The main reason we need the polynomial genericity in Theorem \ref{thm:main modularity} is that we invoked \cite[Theorem 7.3.2]{MLM} to guarantee that $R_\infty(\tau)$ is geometrically integral. However, for $n=3$, this last fact is already guaranteed by $\tau$ being 4-generic, cf \cite[Theorem 1.2.2]{GL3wild}. Thus Theorem \ref{thm:main modularity} holds for $n=3$ when $\rhobar$ is tame and $6$-generic. Furthermore, using the combinatorial classification of the set of \emph{geometric weights} $W^g(\rhobar)$ (\cite[Definition 1.2.1]{GL3wild}) in \cite[Theorem 4.2.5]{GL3wild}, one can still construct a sequence of connecting types as in the above proof to show that Theorem \ref{thm:main modularity} holds even for possibly wildly ramified $\rhobar$ which is $6$-generic. 
%Hence by invoking the strategy of \cite{GL3wild}, the conclusion of Theorem \ref{thm:main modularity} for $n=3$ holds even for possibly wildy ramified $\rhobar$ which is $6$-generic. 
\item In fact, our proof of Theorem \ref{thm:main modularity} does not require that $R_\rhobar^\tau$ is geometrically integral for all tame types $\tau$ (which is essentially equivalent to $\cX^{\eta,\tau}$ being unibranch at $\rhobar$). Rather, the only information we really need is that whenever $\sigma$, $\sigma'\in W^?(\rhobar)$ is connected by some $R(t_\mu s)$, the primes $\fp$, $\fp'$ that come from $\sigma,\sigma'$ generize to the same set of minimal primes $\fQ$ of $R_\infty(\tau(t_\mu s))$. This weaker condition seems easier to check in practice than the unibranch-ness of $\cX^{\eta,\tau}$ at $\rhobar$: for instance, it is implied by the condition that $\rhobar$ occurs in a certain irreducible component of $\cC_\sigma\cap \cC_{\sigma'}$. This idea is picked up in \cite{GSp4}.
\end{enumerate}
\end{rmk}

\section{Homology of arithmetic locally symmetric spaces}

\subsection{Setup}
Let $F/\Q$ be a CM (or totally real) extension. 
Let $K_\infty$ be a maximal compact subgroup of $\GL_n(F\otimes_{\Q} \R)$. 
For example, we can take $K_\infty$ to be a product of $[F:\Q]$ copies of $\mathrm{O}_n(\R)$ (resp.~a product of $\frac{1}{2}[F:\Q]$ copies of $\mathrm{U}(n)$) if $F$ is totally real (resp.~totally imaginary). 
For a compact open subgroup $K \subset \GL_n(\A_F^\infty)$, let 
\[
Y(K) \defeq \GL_n(F) \backslash \GL_n(\A_F) / K K_\infty \R^\times. 
\]
If $K$ is neat, then $Y(K)$ is naturally a real manifold. 

Let $S$ be a finite set of finite places of $F$ stable under complex conjugation. 
If $K$ can be written as a product $K^S K_S$, a finite smooth $\cO[K_S]$-module $V$ defines a tautological local system on $Y(K)$ which we also denote by $V$. 
Let $R\Gamma(Y(K),V) \in \mathbf{D}(\cO)$ denote the derived global sections of the local system $V$. 

Let $G^S \defeq \GL_n(\A_F^{\infty,S})$, and let $\cH(G^S,K^S)$ denote the Hecke algebra over $\cO$ associated to the double coset space $K^S \backslash G^S / K^S$. 
Then there is a canonical homomorphism 
\begin{equation} \label{eqn:hecke}
\cH(G^S,K^S) \ra \End_{\mathbf{D}(\cO)}(R\Gamma(Y(K),V)). 
\end{equation}
When $K^S = \prod_{v\notin S,v\nmid\infty} \GL_n(\cO_{F_v})$, we write $\bT^S \defeq \cH(G^S,K^S)$ and $\bT^S(K,V)$ for the image of \eqref{eqn:hecke}. 

For each finite place $v$ of $F$, fix a uniformizer $\varpi_v$. We have
\begin{itemize}
\item for $0\leq i \leq n$, the elements 
\[
T_{v,i} \defeq [\GL_n(\cO_{F_v}) \mathrm{diag}(\varpi_v,\ldots,\varpi_v,1,\ldots,1)\GL_n(\cO_{F_v})] \in \cH(\GL_n(F_v),\GL_n(\cO_{F_v})) 
\]
where $\varpi_v$ appears $i$ times on the diagonal; and 
\item the polynomial 
\[
P_v(X) = \sum_{i=0}^n (-1)^i \mathbf{N}_{F/\Q}(v)^{\binom{i}{2}} T_{v,i}X^{n-i} \in\cH(\GL_n(F_v),\GL_n(\cO_{F_v}))[X].
\]
\end{itemize}

Suppose that $K^S = \prod_{v\notin S,v\nmid\infty} \GL_n(\cO_{F_v})$. 
For a finite place 
$v$ of $F$, recall that $\Frob_v$ denotes the geometric Frobenius element of $G_{F_v}/I_{F_v}$. 
Assuming either that $F$ contains an imaginary quadratic field or the work of Arthur \cite{arthur}, for a maximal ideal $\fm\subset \bT^S(K,V)$, Scholze \cite[Corollary V.4.3]{scholze} showed there is an associated continuous semisimple Galois representation \[
\rbar = \rbar_\fm: G_{F,S} \ra \GL_n(\bT^S(K,V)/\fm) 
\]
such that $\det(XI_n - \rbar(\Frob_v))$ is equal to the image of $P_v(X)$ in $(\bT^S(K,V)/\fm)[X]$ for all finite places $v\notin S$. 
Similarly, if $\pi$ is a regular $L$-algebraic cuspidal automorphic representation of $\GL_n(\A_F)$ such that $\pi_v$ is unramified for all finite places $v\notin S$, and we fix an isomorphism $\iota: \ovl{\Q}_p \risom \C$, then Scholze \cite[Corollary V.4.2]{scholze} showed there is an associated Galois representation
\[r_\iota(\pi): G_F\ra \GL_n(\ovl{\Q}_p)\]
characterized by a similar equality of the coefficients characteristic polynomial of $r_\iota(\pi)(\Frob_v)$ and (pullback under $\iota$ of) the system of Hecke eigenvalues of $\pi$.

\iffalse
Suppose now that $\rbar_\fm$ is irreducible. 
Then there is a positive integer $N$ depending only on $n$ and $[F:\Q]$, an ideal $I \subset \bT^S(K,V)$ such that $I^N = 0$, and a continuous Galois representation 
\[
r_\fm: G_{F,S} \ra \GL_n(\bT^S(K,V)_\fm/I) 
\]
such that $\det(XI_n - r_\fm(\Frob_v))$ is equal to the image of $P_v(X)$ in $(\bT^S(K,V)_\fm/I)[X]$ for all finite places $v\notin S$. 
\fi

Suppose that $S$ is a finite set of finite places containing all places dividing $p$. 
We say that $K \subset \GL_n(\A_F^\infty)$ is $(S,p)$-good if $K$ can be written as a product $\prod_{v\nmid \infty} K_v$ with $K_v = \GL_n(\cO_{F_v})$ for all $v \notin S$ and for all $v\mid p$. 

\begin{defn}
Let $W(\rbar_\fm)$ be the set of simple $\F[\prod_{v\mid p} \GL_n(\cO_{F_v})]$-modules $\sigma$ such that $R\Gamma(Y(K),\sigma)_\fm$ is nonzero for some $(S,p)$-good $K$. 
\end{defn}

\subsection{The main result}
Our main global theorem is the following. 

\begin{thm}\label{thm:main}
Let $F$ be an imaginary CM field with maximal totally real subfield $F^+$. 
Assume that 
\begin{itemize}
\item $p > 2n+1$ is a prime unramified in $F$; 
\item $F$ contains an imaginary quadratic field $F_0$ in which p splits; and
\item for each place $v|p$ of $F^+$, there exists a place $v' \neq v$ of $F^+$ such that 
\[
\sum_{v'' \neq v,v'} [F^+_{v''}:\Q_p] > \frac{1}{2} [F^+: \Q]. 
\]
\end{itemize}

Let $\rbar: G_F \ra \GL_n(\F)$ be a continuous representation. 
Assume that 
\begin{itemize}
\item $\rbar$ is decomposed generic in the sense of \cite[Definition 4.3.1]{10author}; 
\item $\rbar|_{G_{F(\zeta_p)}}$ is absolutely irreducible; 
\item $\rbar(G_F - G_{F(\zeta_p)})$ contains a scalar matrix; and 
\item there exists a cuspidal automorphic representation $\pi$ of $\GL_n(\A_F)$ satisfying the following conditions:
\begin{itemize}
\item $\pi$ has weight $0$; 
\item $\pi$ has Iwahori fixed vectors away from $p$; 
\item $\pi^{\ker(\GL_n(\cO_F)\ra \GL_n(\cO_F/p))} \neq 0$; and
\item $\ovl{r_\iota(\pi)} \cong \rbar$ for some isomorphism $\iota: \ovl{\Q}_p \risom \C$. 
\end{itemize}
\end{itemize}

Let $\rhobar \defeq (\rbar|_{G_{F_w}})_{w|p}$. Then
\begin{enumerate}
\item if $\rhobar$ is $(2n+1)$-generic, then $W(\rbar) \subset W^?(\rhobar^{\mathrm{ss}})$; and
\item there is a polynomial $P(x_1,\ldots,x_n) \in \Z[x_1,\ldots,x_n]$, independent of $p$, such that if $\rhobar$ is tamely ramified and $P$-generic, then $W(\rbar) = W^?(\rhobar)$. 
\end{enumerate}
\end{thm}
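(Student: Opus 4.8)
The strategy is to set up the axiomatic patching data of \S\ref{sec:axioms} from the cohomology of the locally symmetric spaces $Y(K)$ and then invoke Theorems \ref{thm:WE} and \ref{thm:main modularity}. First I would use the hypotheses on $F$ (containing an imaginary quadratic field, the inequality on local degrees at places above $p$, and $\rbar$ decomposed generic) to apply the local-global compatibility results of \cite{Hevesi} together with Calegari--Geraghty-style patching: form the patched complex $C_\infty$ of $S_\infty[\Gamma]$-modules with $\Gamma = \prod_{w|p}\GL_n(\cO_{F_w})$ and $S_\infty$ formally smooth over $\Z_p$, equipped with the Hecke action $\bT_\infty$ and the surjection $R_\infty \onto \bT_\infty/I_\infty$ with $I_\infty$ nilpotent, where $R_\infty = R^p \,\widehat{\otimes}\, R^t_\rhobar$ and $R^p$ is (a power series ring over) the prime-to-$p$ local deformation/framing factors coming from the Iwahori level away from $p$. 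One must check that the specializations of $C_\infty$ recover $R\Gamma(Y(K),\sigma)_\fm$, so that $W(\rbar) = W_{C_\infty}(\rhobar)$, and crucially that $R^p$ is formally smooth (or at least has generically reduced special fibre), which here follows because $\pi$ has Iwahori-fixed vectors away from $p$ so the relevant local deformation rings at Iwahori level are well-understood.

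**The key input: Axiom \ref{LGC}.** The heart of verifying the axiomatic setup is establishing the local-global compatibility axiom, i.e.\ that $\supp_{R_\infty} H^*(C_\infty(R(t_\mu s)^\circ)) \subset R_\infty(\tau)$ for tame $\tau$. This is where \cite{Hevesi} enters: one needs that the Galois representations attached to torsion Hecke eigenclasses in $H^*(Y(K),R(t_\mu s)^\circ)$ are de Rham (indeed potentially semistable of the prescribed Hodge--Tate weight $\eta$ and inertial type $\tau$) at places above $p$, after passing through inertial local Langlands to match $R(t_\mu s)$ with $\tau$. This is precisely the content of the local-global compatibility statement in \cite{Hevesi}, and the technical hypotheses on $F$ (the degree inequality, membership of an imaginary quadratic field) are imposed exactly to be in its range of applicability. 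I would also verify that $\rhobar$ is $(2n+1)$-generic (resp.\ $P$-generic) as a consequence of the tameness and $P$-genericity assumption on $\rhobar$ in the statement, so that Theorems \ref{thm:WE} and \ref{thm:main modularity} apply.

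**Producing a starting weight.** For part (2), Theorem \ref{thm:main modularity} gives $W_{C_\infty}^+(\rhobar) = W^?(\rhobar)$ \emph{provided} $W_{C_\infty}^+(\rhobar) \neq \emptyset$; so the remaining point is nonemptiness. This is where the automorphic lift $\pi$ with $\overline{r_\iota(\pi)} \cong \rbar$, weight $0$, and $\pi^{\ker(\GL_n(\cO_F)\to\GL_n(\cO_F/p))}\neq 0$ is used: such a $\pi$ contributes a characteristic-zero class to the cohomology of $Y(K)$ with coefficients in a Deligne--Lusztig type $R(t_\mu s)$ (the one matching the level-$p$ structure of $\pi$ via inertial local Langlands), whose reduction mod $p$ therefore has some Jordan--H\"older factor $\sigma \in W(\rbar) \cap \JH(\overline{R}(t_\mu s))$; combined with the support bound (Theorem \ref{thm:supportbound}) this forces some $\sigma' \in W_{C_\infty}^+(\rhobar)$. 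I would then apply Theorem \ref{thm:main modularity} to conclude $W(\rbar) = W_{C_\infty}^+(\rhobar) = W^?(\rhobar)$, and deduce part (1) directly from Theorem \ref{thm:WE}. The main obstacle is the careful bookkeeping in the patching construction --- ensuring the coefficient systems, Hecke actions, deformation rings at Iwahori level away from $p$, and the nilpotent ideal $I_\infty$ all fit together precisely as the axioms demand, and that the output of \cite{Hevesi} is strong enough to yield Axiom \ref{LGC} in the form stated (in particular potential semistability with the exact type $\tau$, not merely de Rham-ness); everything downstream is then formal given the results already established in this excerpt.
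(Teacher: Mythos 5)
Your overall architecture matches the paper's: set up the axiomatic patching data via \cite{GN}, \cite{MT}, and \cite[\S 6.4]{10author}, verify Axiom~\ref{LGC} using \cite[Theorem~5.10]{Hevesi}, identify $W(\rbar) = W_{C_\infty}(\rhobar)$ via the control statement, and then invoke Theorems~\ref{thm:WE} and \ref{thm:main modularity}. Part~(1) follows exactly as you say. The choice of Iwahori level away from $p$ and the role of generically reduced special fibres there is also correctly identified.

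However, there is a genuine gap at the key step of producing a starting weight in $W_{C_\infty}^+(\rhobar)$. You argue that $\pi$ gives a characteristic-zero class, hence a Jordan--H\"older factor $\sigma \in W(\rbar) \cap \JH(\ovl{R}(t_\mu s))$, and that this ``combined with the support bound (Theorem~\ref{thm:supportbound}) forces some $\sigma' \in W_{C_\infty}^+(\rhobar)$.'' This does not work: membership in $W(\rbar) = W_{C_\infty}(\rhobar)$ means some cohomology group $H^d(C_\infty(\sigma))$ is nonzero, whereas membership in $W_{C_\infty}^+(\rhobar)$ requires the \emph{alternating sum} $\lgth_{\bT_{\infty,\fp}} C_\infty(\sigma)_\fp = \sum_d (-1)^d \lgth H^d(C_\infty(\sigma)_\fp)$ to be nonzero at a prime $\fp$ coming from $\sigma$. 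When $\ell_0>0$ this Euler characteristic may vanish by cancellation even though cohomology is nonzero; closing this gap is precisely the central $\ell_0 > 0$ difficulty this paper addresses. Theorem~\ref{thm:supportbound} is a vanishing bound (it constrains where $C_\infty(\sigma)_\fp$ can be nonzero); it gives no nonvanishing of Euler characteristics.

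The actual argument instead passes through the \emph{generic fibre}: the existence of $\pi$ and the concentration of $H^*(C_\infty\otimes^{\bL}_{S_\infty} S_\infty/\fA_\infty)[1/p]$ in degrees $[q_0, q_0+\ell_0]$ let one apply the argument of \cite[Proposition~6.3.8]{10author} to get $\lgth_{\bT_{\infty,\fQ}} C_\infty(\cO[\Gamma])_\fQ \neq 0$ at some \emph{minimal} prime $\fQ$ of $R_\infty$ (not a prime $\fp$ of height~$1$ coming from a Serre weight). One then descends to an irreducible $E[\Gamma]$-subrepresentation $V$ with $V[1/p]\cong R(t_\mu s)$ using $P$-genericity and \cite[Proposition~3.3.2]{LLL}, picks $\sigma \in \JH_\out(\ovl{R}(t_\mu s)) \cap W^?(\rhobar)$ by Proposition~\ref{prop:wtintersect}, finds $\fp \supset \fQ$ coming from $\sigma$ via Proposition~\ref{prop:topological BM}, and uses the generically reduced special fibre of $R^p$ (from the Iwahori-level unipotently-ramified deformation rings, per \cite[\S 3]{taylor} and \cite[\S 3]{ANT}) to ensure $\fQ$ is the unique minimal prime of $R_\infty(\tau(s,\mu))$ under $\fp$. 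Only then does Lemma~\ref{lem:connecting type} produce $\lgth_{\bT_{\infty,\fp}} C_\infty(\sigma)_\fp \neq 0$. Your sketch omits this entire chain and substitutes an appeal to Theorem~\ref{thm:supportbound} that cannot do the job.
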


\begin{rmk}\mbox{}
\begin{enumerate}
\item The inclusion $W(\rbar) \subset W^?(\rhobar^{\mathrm{ss}})$ does not require $\rbar$ to arise from an automorphic representation $\pi$, but merely from a mod $p$ Hecke eigenclass (note that such classes typically do not lift). 
\item The appearance of the hypothesis on the local behavior of $\pi$ is to guarantee that the intersection of the component of each local deformation ring determined by $\pi$ and any other component does not contain an irreducible component of the special fiber. 
This property is needed for us to ensure in certain circumstances that the sum appearing in Lemma \ref{lem:connecting type} is nonzero. 
This allows us to produce elements in $W(\rbar)$. 
We expect that this hypothesis can be considerably relaxed. 
\end{enumerate}
\end{rmk}

\begin{proof}
Let $\Gamma \defeq \GL_n(\cO_F/p)$. 
The existence of $\pi$ shows that there exists a finite set $S$ of finite places of $F$ and a maximal ideal $\fm \subset \bT^S(K,\cO[\Gamma])$ for some $K$ which is $(S,p)$-good such that $\rbar \cong \rbar_\fm$. 
Let 
\[
\tld{R}^{\mathrm{loc}} \defeq \widehat{\otimes}_{w\in S,\cO} R_{\rbar|_{G_{F_w}}}^\Box 
\]
be the completed tensor product of local lifting rings at places in $S$, and define its quotient
\[
R^{\mathrm{loc}} \defeq \widehat{\otimes}_{w\in S, w\nmid p,\cO} R_{\rbar|_{G_{F_w}}}^\Box \widehat{\otimes}_\cO \widehat{\otimes}_{w|p,\cO} R_{\rbar|_{G_{F_w}}}^t. 
\]
As explained in the proof of \cite[Theorem 8.1]{MT}, the image hypotheses on $\rbar$ (see also \cite[Theorem A.9]{thorne}) show that we can perform ultrapatching for the complex $R\Gamma(X(K),\cO[\Gamma])_\fm$ (and $R\Gamma(X(K_1(Q_N)),\cO[\Gamma])_\fm$ for sets $Q_N$ of Taylor--Wiles primes) as in \cite[\S 6.4]{10author}. 
By furthermore keeping track of the $\Gamma$-symmetry as in \cite[\S 3]{GN} (especially \cite[Lemma 3.4.11]{GN}), we obtain 
\begin{enumerate}
\item a formally smooth $\tld{R}^{\mathrm{loc}}$-algebra $\tld{R}_\infty$;
\item a formally smooth $\cO$-algebra $S_\infty$ with augmentation ideal $\mathfrak{a}_\infty$; 
\item a perfect complex $C_\infty$ of $S_\infty[\Gamma]$-modules; 
\item an $S_\infty$-algebra $\bT_\infty \subset \End_{S_\infty[\Gamma]}(C_\infty)$; and 
\item a surjection $\tld{R}_\infty \onto \bT_\infty/I_\infty$ for some nilpotent ideal $I_\infty\subset \bT_\infty$. 
\end{enumerate}
This satisfies the following properties. 
\begin{enumerate}
\item $C_\infty \otimes_{S_\infty}^{\bL} S_\infty/\mathfrak{a}_\infty = R\Gamma(X(K),\cO[\Gamma])_\fm$. 
\item Let $\ell_0 = [F^+:\Q]n-1$ and $R_\infty \defeq \tld{R}_\infty \otimes_{\tld{R}^{\mathrm{loc}}} R^{\mathrm{loc}}$. Then $\dim S_\infty=\dim R_\infty+\ell_0$. 
\end{enumerate}

Our hypotheses on the field $F$ and $\rbar$ show that the local-global compatibility result \cite[Theorem 5.10]{Hevesi} applies. 
This implies that our axiomatic setup of \S \ref{sec:axioms} applies (after possibly enlarging $I_\infty$): first, the surjection $\tld{R}_\infty \onto \bT_\infty/I_\infty$ factors through $R_\infty$ and second, the local-global compatibility Axiom \ref{LGC} holds. 
Note that the control statement $C_\infty \otimes_{S_\infty}^{\bL} S_\infty/\mathfrak{a}_\infty = R\Gamma(X(K),\cO[\Gamma])_\fm$ shows that $W(\rbar) = W_{C_\infty}(\rhobar)$. 
In particular, $W(\rbar) \subset W^?(\rhobar)$. 
Let $P$ be the polynomial in Theorem \ref{thm:main modularity}. 
We will show $W(\rbar) \supset W^?(\rhobar)$ by invoking Theorem \ref{thm:main modularity} in the above setting for a subgroup $K$ such that $K_w$ is Iwahori at all places not dividing $p$ where $\pi$ is ramified (and pro-$w_1$ Iwahori at an appropriate auxiliary place $w_1$ to ensure $K$ is neat). 
With this choice of $K$, we can (for instance, by \cite[Theorem 3.1.1]{10author}) and will modify $R^{\mathrm{loc}}$ so that its factors away from $p$ are the unipotently ramified lifting rings. 

We need to show that $W_{C_\infty}^+(\rhobar) \neq \emptyset$ to apply Theorem \ref{thm:main modularity}. 
The existence of the automorphic representation $\pi$ such that $\rbar \cong \ovl{r_\iota(\pi)}$ shows that 
\[
H^*(C_\infty \otimes^{\bL}_{S_\infty} S_\infty/\mathfrak{a}_\infty)[1/p] \neq 0  
\]
and that these groups are nonzero only for degrees in the interval $[q_0,q_0+\ell_0]$ where $q_0 = [F^+:\Q]n(n-1)/2$. 
The proof of \cite[Proposition 6.3.8]{10author} shows that $\lgth_{\bT_{\infty,\fQ}}  C_\infty(\cO[\Gamma])_{\fQ} \neq 0$ for some minimal prime $\fQ \subset R_\infty$. 
This implies that $\lgth_{\bT_{\infty,\fQ}}  C_\infty(V)_{\fQ} \neq 0$ for some $\cO$-lattice $V$ in an irreducible $E[\Gamma]$-subrepresentation of a Deligne--Lusztig representation $R(t_\mu s)$. 
In particular, $\fQ$ comes from a minimal prime of $R_\infty(\tau(\mu,s))$, so this ring must be nonzero. 
Choosing $P$ to be divisible by $P_{3n}$, with $P_m$ defined as in \cite[Remark 2.1.11(2)]{MLM}, we see that $\rhobar$ is $3n$-generic so that we may assume that $\mu -\eta$ is $2n$-deep in $C_0$ and $V[1/p] \cong R(t_\mu s)$ by \cite[Proposition 3.3.2]{LLL}. 
Moreover, we can find $\sigma \in \JH_\out(R(t_\mu s))\cap W^?(\rhobar)$ by Proposition \ref{prop:wtintersect}. 
By Proposition \ref{prop:topological BM}, there exists a prime ideal $\fp$ coming from $\sigma$ containing $\fQ$. 
By our choice of local deformation rings away from $p$, $\fQ$ is the unique minimal prime of $R_\infty(\tau(\mu,s))$ contained in $\fp$ (indeed, the unipotently ramified lifting rings away from $p$ have generically reduced special fiber, see \cite[\S 3]{taylor} and \cite[\S 3]{ANT}).  
Then Lemma \ref{lem:connecting type} implies that $\lgth_{\bT_{\infty,\fp}}  C_\infty(\sigma)_{\fp} \neq 0$. 
Theorem \ref{thm:main modularity} implies that $W^?(\rhobar) \subset W_{C_\infty}^+(\rhobar) \subset W(\rbar)$. 
\end{proof}

\newpage
\bibliography{Biblio}
\bibliographystyle{amsalpha}

\end{document}